\providecommand{\U}[1]{\protect\rule{.1in}{.1in}}
\providecommand{\U}[1]{\protect\rule{.1in}{.1in}}
\newtheorem{theo}{Theorem}[section]
\newtheorem{lem}[theo]{Lemma}
\newtheorem{rem}[theo]{Remark}
\numberwithin{equation}{section}
\begin{document}

\title{Uniform Stability of a Particle Approximation of the Optimal Filter
Derivative\thanks{First version: January 2011. Cambridge University Engineering Department Technical report 
number CUED/F-INFENG/TR.668}}
\author{Pierre Del Moral\thanks{Centre INRIA Bordeaux et Sud-Ouest \& Institut de
Math\'ematiques de Bordeaux , Universit\'e de Bordeaux I, 351 cours de la
Lib\'eration 33405 Talence cedex, France (Pierre.Del-Moral@inria.fr)} , Arnaud
Doucet\thanks{Department of Statistics, University of British Columbia, V6T
1Z4 Vancouver, BC, Canada (arnaud@stat.ubc.ca)} , Sumeetpal S.
Singh\thanks{Department of Engineering, University of Cambridge, Trumpington
Street, CB2 1PZ, United Kingdom (sss40@cam.ac.uk)}}
\maketitle

\begin{abstract}
Sequential Monte Carlo methods, also known as particle methods, are a widely
used set of computational tools for inference in non-linear non-Gaussian
state-space models. In many applications it may be necessary to compute the
sensitivity, or derivative, of the optimal filter with respect to the static
parameters of the state-space model; for instance, in order to obtain maximum
likelihood model parameters of interest, or to compute the optimal controller
in an optimal control problem. In \citet{PDS11} an original particle algorithm to
compute the filter derivative was proposed and it was shown using numerical
examples that the particle estimate was numerically stable in the sense that it
did not deteriorate over time.
In this paper we
substantiate this claim with a detailed theoretical study. $\mathbb{L}_{p}$
bounds and a central limit theorem for this particle approximation of the
filter derivative are presented. It is further shown that under mixing
conditions these $\mathbb{L}_{p}$ bounds and the asymptotic variance
characterized by the central limit theorem are uniformly bounded with respect
to the time index. We demonstrate the performance predicted by theory with
several numerical examples. We also use the particle approximation of the
filter derivative to perform online maximum likelihood parameter estimation
for a stochastic volatility model.

\emph{Some key words}: Hidden Markov Models, State-Space Models, Sequential
Monte Carlo, Smoothing, Filter derivative, Recursive Maximum Likelihood.

\end{abstract}

\section{Introduction\label{sec:intro}}

State-space models are a very popular class of non-linear and non-Gaussian
time series models in statistics, econometrics and information engineering;
see for example \citet{cappe2005}, \citet{Douc01}, \citet{DurbinKoopman2001}.
A state-space model is comprised of a pair of discrete-time stochastic
processes, $\left\{  X_{n}\right\}  _{n\mathbb{\geq}0}$ and $\left\{
Y_{n}\right\}  _{n\geq0}$, where the former is an $\mathcal{X}$-valued
unobserved process and the latter is a $\mathcal{Y}$-valued process which is
observed. The hidden process $\left\{  X_{n}\right\}  _{n\mathbb{\geq}0}$ is
a\ Markov process with initial law $dx\pi_{\theta}\left(  x\right)  $ and time
homogeneous transition law $dx^{\prime}f_{\theta}\left(  \left.  x^{\prime
}\right\vert x\right)  $, i.e.
\begin{equation}
X_{0}\sim dx_{0}\pi_{\theta}\left(  x_{0}\right)  \text{ and }\left.
X_{n}\right\vert \left(  X_{n-1}=x_{n-1}\right)  \sim dx_{n}f_{\theta}\left(
\left.  x_{n}\right\vert x_{n-1}\right)  ,\qquad n\geq1. \label{eq:evol}%
\end{equation}
It is assumed that the observations $\left\{  Y_{n}\right\}  _{n\geq0}%
$\ conditioned upon $\left\{  X_{n}\right\}  _{n\mathbb{\geq}0}$ are
statistically independent and have marginal laws
\begin{equation}
\left.  Y_{n}\right\vert \left(  \left\{  X_{k}\right\}  _{k\geq0}=\left\{
x_{k}\right\}  _{k\geq0}\right)  \sim dy_{n}g_{\theta}\left(  \left.
y_{n}\right\vert x_{n}\right)  . \label{eq:obs}%
\end{equation}
Here $\pi_{\theta}\left(  x\right)  $, $f_{\theta}\left(  \left.  x\right\vert
x^{\prime}\right)  $ and $g_{\theta}\left(  \left.  y\right\vert x\right)  $
are densities with respect to (w.r.t.)\ suitable dominating measures denoted
generically as $dx$ and $dy$. For example, if $\mathcal{X}\subseteq
\mathbb{R}^{p}$ and $\mathcal{Y}\subseteq\mathbb{R}^{q}$ then the dominating
measures could be the Lebesgue measures. The variable $\theta$\ in the
densities are the particular parameters of the model. The set of possible
values for $\theta$, denoted $\Theta$, is assumed to be an open subset of
$\mathbb{R}^{d}$. The model (\ref{eq:evol})-(\ref{eq:obs}) is also often
referred to as a hidden Markov model in the literature \citet{cappe2005}.

For a sequence $\left\{  z_{n}\right\}  _{n\geq0}$ and integers $i$, $j$, let
$z_{i:j}$ denote the set $\left\{  z_{i},z_{i+1},...,z_{j}\right\}  $, which
is empty if $j<i$. Equations (\ref{eq:evol}) and (\ref{eq:obs}) define the law
of $\left(  X_{0:n},Y_{0:n-1}\right)  $ which is given by the measure
\begin{equation}
dx_{0}\pi_{\theta}\left(  x_{0}\right)  \prod\limits_{k=1}^{n}dx_{k}f_{\theta
}\left(  \left.  x_{k}\right\vert x_{k-1}\right)  \prod\limits_{k=0}%
^{n-1}dy_{k}g_{\theta}\left(  \left.  y_{k}\right\vert x_{k}\right)  ,
\label{eq:jointdensity}%
\end{equation}
from which the probability density of the observed process, or likelihood, is
obtained
\begin{equation}
p_{\theta}\left(  y_{0:n-1}\right)  =\int dx_{0}\pi_{\theta}\left(
x_{0}\right)  \prod\limits_{k=1}^{n}dx_{k}f_{\theta}\left(  \left.
x_{k}\right\vert x_{k-1}\right)  \prod\limits_{k=0}^{n-1}g_{\theta}\left(
\left.  y_{k}\right\vert x_{k}\right)  .
\label{eq:decompositionjointdistribution}%
\end{equation}
For a realization of observations $Y_{0:n-1}=y_{0:n-1}$, let $\mathbb{Q}%
_{\theta,n}$\ denote the law of $X_{0:n}$\ conditioned on this sequence of
observed variables, i.e.
\[
\mathbb{Q}_{\theta,n}(dx_{0:n})=\frac{1}{p_{\theta}\left(  y_{0:n-1}\right)  }
\left(  dx_{0}\pi_{\theta}\left(  x_{0}\right)  g_{\theta}\left(  \left.
y_{0}\right\vert x_{0}\right)  \prod\limits_{k=1}^{n-1}dx_{k}f_{\theta}\left(
\left.  x_{k}\right\vert x_{k-1}\right)  g_{\theta}\left(  \left.
y_{k}\right\vert x_{k}\right)  \right)  dx_{n}f_{\theta}\left(  \left.
x_{n}\right\vert x_{n-1}\right)
\]
Let $\eta_{\theta,n}$ denote the time $n$ marginal of $\mathbb{Q}_{\theta,n}$.
This marginal, which we call the filter, may be computed recursively using
Bayes' formula:
\[
\eta_{\theta,n+1}(dx_{n+1})=\mathbb{Q}_{\theta,n+1}\left(  dx_{n+1}\right)
=\frac{dx_{n+1}\int\eta_{\theta,n}\left(  dx_{n}\right)  g_{\theta}\left(
\left.  y_{n}\right\vert x_{n}\right)  f_{\theta}\left(  \left.
x_{n+1}\right\vert x_{n}\right)  }{\int\eta_{\theta,n}\left(  dx_{n}^{\prime
}\right)  g_{\theta}\left(  \left.  y_{n}\right\vert x_{n}^{\prime}\right)
},\quad n\geq0
\]
and $\eta_{\theta,0}=\pi_{\theta}$ by convention. Except for simple models
such the linear Gaussian state-space model or when $\mathcal{X}$ is a finite
set, it is impossible to compute $p_{\theta}\left(  y_{0:n}\right)  $,
$\mathbb{Q}_{\theta,n}$ or $\eta_{\theta,n}$ exactly. Particle methods have
been applied extensively to approximate these quantities for general
state-space models of the form (\ref{eq:evol})--(\ref{eq:obs}); see
\citet{cappe2005}, \citet{Douc01}.

The particle approximation of $\mathbb{Q}_{\theta,n}$ is the empirical measure
corresponding to a set of $N\geq1$ random samples termed particles, that is
\begin{equation}
\mathbb{Q}_{\theta,n}^{\text{p},N}\left(  dx_{0:n}\right)  =\frac{1}{N}\sum_{i=1}^{N}\delta_{X_{0:n}^{\left(  i\right)
}}\left(  dx_{0:n}\right)  \label{eq:smcJointDen}%
\end{equation}
where $\delta_{z}\left(  dz\right)  $ denotes the Dirac delta mass located at
$z$. This approximation is referred to as the \emph{path space} approximation
\cite{delmoral2004} and it is denoted by the superscript `p'. The particle
approximation of\ $\eta_{\theta,n}$\ is obtained from $\mathbb{Q}_{\theta
,n}^{\text{p},N}$\ by marginalization
\[
\eta_{\theta,n}^{N}(dx_{n})=\frac{1}{N}\sum_{i=1}^{N}\delta_{X_{n}^{\left(
i\right)  }}\left(  dx_{n}\right)  .
\]
These particles are propagated in time using importance sampling\ and
resampling steps; see \citet{Douc01} and \citet{cappe2005} for a review of the
literature. Specifically, $\mathbb{Q}_{\theta,n+1}^{\text{p},N}$\ is the
empirical measure constructed from $N$ independent samples from%
\begin{equation}
\frac{\mathbb{Q}_{\theta,n}^{\text{p},N}\left( dx_{0:n}\right)  dx_{n+1}f_{\theta}\left(  \left.  x_{n+1}\right\vert
x_{n}\right)  g_{\theta}\left(  \left.  y_{n}\right\vert x_{n}\right)  }%
{\int\mathbb{Q}_{\theta,n}^{\text{p},N}\left(  dx_{0:n}\right)  g_{\theta}\left(  \left.  y_{n}\right\vert x_{n}\right)
}.\label{eq:smcJointDenRecursion}%
\end{equation}
It is a well known fact that the particle approximation of $\mathbb{Q}%
_{\theta,n}$\ becomes progressively impoverished as $n$ increases because of
the successive resampling steps \citep{delmoraldoucet2003,ODCE08}. That is,
the number of distinct particles representing the marginal $\mathbb{Q}%
_{\theta,n}^{\text{p},N}(dx_{0:k})$\ for any fixed $k<n$ diminishes as $n$
increases until it collapses to a single particle -- this is known as the
\emph{particle path degeneracy} problem.

The focus of this paper is on the convergence properties of particle methods
which have been recently proposed to approximate the derivative of the
measures $\{\eta_{\theta,n}(dx_{n})\}_{n\geq0}$ w.r.t.\ $\theta=[\theta
_{1},\ldots\theta_{d}]^{\text{T}}\in\mathbb{R}^{d}$:
\[
\zeta_{\theta,n}=\nabla\eta_{\theta,n}=\left[  \frac{\partial\eta_{\theta,n}%
}{\partial\theta_{1}},\ldots,\frac{\partial\eta_{\theta,n}}{\partial\theta
_{d}}\right]  ^{\text{T}}.
\]
(See Section \ref{sec:computingFiltGrad}\ for a definition.) References
\citet{cerou2001} and \citet{tadicdoucet2003} present particle methods which
have a computational complexity that scales linearly with the number $N$ of
particles. It was shown in \citet{PDS11} (see also \citet{PDS09} for a more
detailed numerical study) that the performance of these $\mathcal{O}(N)$
methods, which inherently rely on the particle approximations of
$\{\mathbb{Q}_{\theta,n}\}_{n\geq0}$\ constructed as in
(\ref{eq:smcJointDenRecursion}) above, degraded over time and it was
conjectured that this may be attributed to the particle path degeneracy
problem. In contrast, the alternative method of \citet{PDS05} was shown in
numerical examples to be stable. The method of \citet{PDS05} is a non-standard
particle implementation that avoids the particle path degeneracy problem at
the expense of a computational complexity per time step which is quadratic in
the number of particles, i.e. $\mathcal{O}(N^{2})$; see Section
\ref{sec:computingFiltGrad} for more details. Supported by numerical examples,
it was conjectured in \citet{PDS11} that even under strong mixing assumptions,
the variance of the estimate of the filter derivative computed with the
$\mathcal{O}(N)$ methods increases at least linearly in time while that of the
$\mathcal{O}(N^{2})$ is uniformly bounded w.r.t.\ the time index. This
conjecture is confirmed in this paper. Specifically, we analyze the
$\mathcal{O}(N^{2})$\ implementation of \citet{PDS05} in Section
\ref{sec:stability} and obtain results on the errors of the approximation, in
particular, $\mathbb{L}_{p}$ bounds and a Central Limit Theorem (CLT) are
presented. We show that these $\mathbb{L}_{p}$ bounds and asymptotic variances
appearing in the CLT\ are uniformly bounded w.r.t.\ the time index when the
state-space model satisfies certain mixing assumptions. In contrast, the
asymptotic variance of the $\mathcal{O}(N)$ implementations, which is also
captured through the CLT, is shown to increase linearly. To the best of our
knowledge, these are the first results of this kind.

An important application of our results, which is discussed in detail in
Section \ref{sec:application}, is to the problem of estimating the parameters
of the model (\ref{eq:evol})--(\ref{eq:obs}) from observed data. The estimates
of the model parameters are found by maximizing the likelihood function
$p_{\theta}(y_{0:n})$ with respect to $\theta$ using a gradient ascent
algorithm which relies on the particle\ approximation of the filter
derivative. The results we present in Section \ref{sec:stability} have bearing
on the performance of the parameter estimation algorithm, which we illustrate
with numerical examples in Section \ref{sec:application}. The
Appendix contains the proofs of the main results as well as that of some
supporting auxiliary results. As a final remark,
although the algorithms and theoretical results are
presented for a state-space model, they may be reinterpreted for Feynman-Kac models as well.

\subsection{Notation and definitions}

\label{subsec:notation} We give some basic definitions from probability and
operator semigroup theory. For a measurable space $(E,\mathcal{E})$\ let
$\mathcal{M}(E)$ denote the set of all finite signed measures\ and
$\mathcal{P}(E)$ the set of all probability measures on $E$. The $n$-fold
product space $E\times\cdots\times E$\ is denoted by $E^{n}$. Let
$\mathcal{B}(E)$\ denote the Banach space of all bounded real-valued and
measurable functions $\varphi:E\rightarrow\mathbb{R}$ equipped with the
uniform norm $\Vert\varphi\Vert=\text{sup}_{x\in E}|\varphi(x)|$. For $\nu
\in\mathcal{M}(E)$ and $\varphi\in\mathcal{B}(E)$, let $\nu(\varphi)=\int
~\nu(dx)~\varphi(x)$ be the Lebesgue integral of $\varphi$ w.r.t.\ $\nu$. If
$\nu$\ is a density w.r.t.\ some dominating measure $dx$ on $E$ then,
$\nu(\varphi)=\int dx$ $\nu(x)~\varphi(x)$. We recall that a bounded integral
kernel $M(x,dx^{\prime})$ from a measurable space $(E,\mathcal{E})$ into an
auxiliary measurable space $(E^{\prime},\mathcal{E}^{\prime})$ is an operator
$\varphi\mapsto M(\varphi)$ from $\mathcal{B}(E^{\prime})$ into $\mathcal{B}%
(E)$ such that the functions
\[
x\mapsto M(\varphi)(x):=\int_{E^{\prime}}M(x,dx^{\prime})\varphi(x^{\prime})
\]
are $\mathcal{E}$-measurable and bounded for any $\varphi\in\mathcal{B}%
(E^{\prime})$. The kernel $M$ also generates a dual operator $\nu\mapsto\nu M$
from $\mathcal{M}(E)$ into $\mathcal{M}(E^{\prime})$ defined by
\[
(\nu M)(\varphi):=\nu(M(\varphi)).
\]
Given a pair of bounded integral operators $(M_{1},M_{2})$, we let
$(M_{1}M_{2})$ the composition operator defined by $(M_{1}M_{2})(\varphi
)=M_{1}(M_{2}(\varphi))$.

A Markov kernel is a positive and bounded integral operator $M$ such that
$M(1)\left(  x\right)  =1$ for any $x\in E$. For $\varphi\in\mathcal{B}(E)$,
let
\[
{\mbox{{\rm osc}}(}\varphi)=\sup_{x,x^{\prime}\in E}\left\vert \varphi
(x)-\varphi(x^{\prime})\right\vert
\]
and let
\[
\mbox{Osc}_{1}(E)=\{\varphi\in\mathcal{B}(E):{\mbox{{\rm osc}}(}\varphi
)\leq1\}.
\]
Let $\beta(M)\in\lbrack0,1]$ denote the Dobrushin coefficient of the Markov
kernel $M$ which is defined by the formula \citep[Prop. 4.2.1]{delmoral2004}:
\[
\beta(M):=\sup{\ \{\mbox{{\rm osc}}(M(\varphi))\;;\;\;\varphi\in
\mbox{{\rm Osc}}_{1}(E^{\prime})\}.}
\]
If there exists a positive constant $\rho$ such that the Markov kernel $M$
satisfies
\[
M(x,dz)\geq\rho M(x^{\prime},dz)\;\text{for all}\;\, x,x^{\prime}\in
E\ \,\;\text{then}\;\beta\left(  M\right)  \leq1-\rho.
\]
For two Markov kernels $M_{1},M_{2}$, $\beta(M_{1}M_{2})\leq\beta(M_{1}%
)\beta(M_{2})$.

Given a positive function $G$ on $E$, let $\Psi_{G}:\nu\in\mathcal{P}%
(E)\mapsto\Psi_{G}(\nu)\in\mathcal{P}(E)$ be the probability distribution
defined by
\[
\Psi_{G}(\nu)(dx):=\frac{\nu(dx)G(x)}{\nu(G)}
\]
provided $\infty>\nu(G)>0$. The definitions above also apply if $\nu$\ is a
density and $M$ is a transition density. In this case all instances of
$\nu(dx)$ should be replaced with $dx\nu(x)$\ and $M(x,dx^{\prime})$ by
$dx^{\prime}M(x,x^{\prime})$ where $dx$ and $dx^{\prime}$ is generic notation
for the dominating measures.

It is convenient to introduce the following transition kernels:
\begin{align*}
Q_{\theta,n}(x_{n-1},dx_{n})  &  =g_{\theta}(y_{n-1}|x_{n-1})dx_{n}f_{\theta
}(x_{n}|x_{n-1})=dx_{n}q_{\theta}(x_{n}|x_{n-1}),\quad n>0,\\
Q_{\theta,k,n}(x_{k},dx_{n})  &  =\left(  Q_{\theta,k+1}Q_{\theta,k+2}\cdots
Q_{\theta,n}\right)  (x_{k},dx_{n}),\quad0\leq k\leq n,
\end{align*}
with the convention that $Q_{\theta,n,n}=Id$, the identity operator. Note that
$Q_{\theta,k,n}(1)\left(  x_{k}\right)  $ is the density of the law of
$Y_{k:n-1}$\ given $X_{k}=x_{k}$.\ For $0\leq p\leq n$, define the potential
function $G_{\theta,p,n}$ on $\mathcal{X}$ to be
\begin{equation}
G_{\theta,p,n}(x_{p})=Q_{\theta,p,n}(1)(x_{p})/\eta_{\theta,p}Q_{\theta
,p,n}(1). \label{DGP}%
\end{equation}
Let the mapping $\Phi_{\theta,k,n}:\mathcal{P}(\mathcal{X})\rightarrow
\mathcal{P}(\mathcal{X})$, $0\leq k\leq n$, be defined as follows
\[
\Phi_{\theta,k,n}(\nu)(dx_{n})=\frac{\nu Q_{\theta,k,n}(dx_{n})}{\nu
Q_{\theta,k,n}(1)}.
\]
It follows that $\eta_{\theta,n}=\Phi_{\theta,k,n}(\eta_{\theta,k})$. For
conciseness, we also write $\Phi_{\theta,n-1,n}$ as $\Phi_{\theta,n}$.

A key quantity that facilitates the recursive computation of the derivative of
$\eta_{\theta,n}$ is the following collection of backward Markov transition
kernels:
\begin{equation}
M_{\theta,n}(x_{n},dx_{n-1})=\frac{\eta_{\theta,n-1}(dx_{n-1})q_{\theta}%
(x_{n}|x_{n-1})}{\eta_{\theta,n-1}(q_{\theta}(x_{n}|\cdot))},\quad n>0.
\end{equation}
Their particle approximations are
\begin{equation}
M_{\theta,n}^{N}(x_{n},dx_{n-1})=\frac{\eta_{\theta,n-1}^{N}(dx_{n-1}%
)q_{\theta}(x_{n}|x_{n-1})}{\eta_{\theta,n-1}^{N}(q_{\theta}(x_{n}|\cdot))}.
\label{eq:Mparticle}%
\end{equation}
These backward Markov kernels are convenient for computing certain conditional
expectations and probability measures. In particular, for $\varphi
\in\mathcal{B}(\mathcal{X}^{2})$, we have
\[
\mathbb{E}_{\theta}\left[  \left.  \varphi\left(  X_{n-1},X_{n}\right)
\right\vert y_{0:n-1},x_{n}\right]  =\int M_{\theta,n}(x_{n},dx_{n-1}%
)\varphi\left(  x_{n-1},x_{n}\right)  ,
\]
and the law of $X_{0:n-1}$ given $X_{n}=x_{n}$ and $Y_{0:n-1}=y_{0:n-1}$ is
$M_{\theta,n}(x_{n},dx_{n-1})\cdots M_{\theta,1}(x_{1},dx_{0})$.

Finally, the following two definitions are needed for the CLT of the particle
approximation of the derivative of $\eta_{\theta,n}$. The bounded integral
operator $D_{\theta,k,n}$ from $\mathcal{X}$ into $\mathcal{X}^{n+1}$ is
defined for any $F_{n}\in\mathcal{B}(\mathcal{X}^{n+1})$ by
\begin{equation}
D_{\theta,k,n}(F_{n})(x_{k}):=\int\left(  \prod\limits_{j=k}^{1}M_{\theta
,j}(x_{j},dx_{j-1})\right)  \left(  \prod\limits_{j=k}^{n-1}Q_{\theta
,j+1}(x_{j},dx_{j+1})\right)  F_{n}(x_{0:n}),\quad0\leq k\leq n,
\end{equation}
with the convention that $\prod\emptyset=1$. The
particle\ approximation, $D_{\theta,k,n}^{N}$, is defined to be
\begin{equation}
D_{\theta,k,n}^{N}(F_{n})(x_{k}):=\int\left(  \prod\limits_{j=k}^{1}%
M_{\theta,j}^{N}(x_{j},dx_{j-1})\right)  \left(  \prod\limits_{j=k}%
^{n-1}Q_{\theta,j+1}(x_{j},dx_{j+1})\right)  F_{n}(x_{0:n}).
\end{equation}
To be concise we write
\[
\eta_{\theta,k}(dx_{k})D_{\theta,k,n}(x_{k},dx_{0:k-1},dx_{k+1:n}%
)\quad\text{as}\quad\eta_{\theta,k}D_{\theta,k,n}(dx_{0:n}).
\]
(And similarly for the particle versions.) Although convention dictates that
$\eta_{\theta,k}D_{\theta,k,n}$ should be understood as the measure
$(\eta_{\theta,k}D_{\theta,k,n})(dx_{0:k-1},dx_{k+1:n})$, when we mean
otherwise it should be clear from the infinitesimal neighborhood.
%

\section{Computing the filter derivative\label{sec:computingFiltGrad}}

For any $F_{n}\in\mathcal{B}(\mathcal{X}^{n+1})$, we have
\begin{align}
&  \nabla\mathbb{Q}_{\theta,n}(F_{n})\nonumber\\
&  =\frac{1}{p_{\theta}\left(  y_{0:n-1}\right)  }\int dx_{0:n}\nabla\left(
\pi_{\theta}\left(  x_{0}\right)  \prod\limits_{k=1}^{n}f_{\theta}\left(
\left.  x_{k}\right\vert x_{k-1}\right)  \quad\prod\limits_{k=0}%
^{n-1}g_{\theta}\left(  \left.  y_{k}\right\vert x_{k}\right)  \right)
F_{n}(x_{0:n})\nonumber\\
&  \quad-\frac{1}{p_{\theta}\left(  y_{0:n-1}\right)  }\mathbb{E}_{\theta
}\left\{  \left.  F_{n}(X_{0:n})\right\vert y_{0:n-1}\right\}  \int
dx_{0:n}\nabla\left(  \pi_{\theta}\left(  x_{0}\right)  \prod\limits_{k=1}%
^{n}f_{\theta}\left(  \left.  x_{k}\right\vert x_{k-1}\right)  \quad
\prod\limits_{k=0}^{n-1}g_{\theta}\left(  \left.  y_{k}\right\vert
x_{k}\right)  \right) \nonumber\\
&  =\mathbb{E}_{\theta}\left\{  \left.  F_{n}(X_{0:n})T_{\theta,n}%
(X_{0:n})\right\vert y_{0:n-1}\right\}  -\mathbb{E}_{\theta}\left\{  \left.
F_{n}(X_{0:n})\right\vert y_{0:n-1}\right\}  \mathbb{E}_{\theta}\left\{
\left.  T_{\theta,n}(X_{0:n})\right\vert y_{0:n-1}\right\}
\label{eq:derivativegeneralfunction}%
\end{align}
where
\begin{align}
T_{\theta,n}(x_{0:n})  &  =\sum_{k=0}^{n}t_{\theta,k}(x_{k-1},x_{k}%
)\label{eq:initSmallT}\\
t_{\theta,k}(x_{k-1},x_{k})  &  =\nabla\log\left(  g_{\theta}\left(  \left.
y_{k-1}\right\vert x_{k-1}\right)  f_{\theta}\left(  \left.  x_{k}\right\vert
x_{k-1}\right)  \right)  ,\quad k>0,\\
t_{\theta,0}(x_{-1},x_{0})  &  =t_{\theta,0}(x_{0})=\nabla\log\pi_{\theta
}\left(  x_{0}\right)  .
\end{align}
The first equality in (\ref{eq:derivativegeneralfunction}) follows from the
definition of $\mathbb{Q}_{\theta,n}$\ and interchanging the order of
differentiation and integration.
The interchange is permissible under certain regularity
conditions \citep{Pfl96}; e.g. a sufficient condition would be the main
assumption in Section \ref{sec:stability} under which the uniform stability
results are proved. The second equality follows from a
change of measure, which then permits an importance
sampling based estimator for the derivative of $\mathbb{Q}_{\theta,n}$;
this is the well known score method, e.g. see \citet[Section 4.2.1]{Pfl96}.
For any $\varphi_{n}%
\in\mathcal{B}(\mathcal{X})$, it follows by setting $F_{n}(x_{0:n}%
)=\varphi_{n}(x_{n})$ in (\ref{eq:derivativegeneralfunction}) that
\begin{align*}
&  \nabla\int\eta_{\theta,n}(dx_{n})\varphi_{n}(x_{n})\\
&  =\mathbb{E}_{\theta}\left\{  \left.  \varphi_{n}(X_{n})T_{\theta,n}%
(X_{0:n})\right\vert y_{0:n-1}\right\}  -\mathbb{E}_{\theta}\left\{  \left.
\varphi_{n}(X_{n})\right\vert y_{0:n-1}\right\}  \mathbb{E}_{\theta}\left\{
\left.  T_{\theta,n}(X_{0:n})\right\vert y_{0:n-1}\right\} \\
&  =\int\zeta_{\theta,n}(dx_{n})\varphi_{n}(x_{n})
\end{align*}
where
\begin{equation}
\zeta_{\theta,n}(dx_{n})=\eta_{\theta,n}(dx_{n})\left(  \mathbb{E}_{\theta
}\left[  \left.  T_{\theta,n}\left(  X_{0:n}\right)  \right\vert
y_{0:n-1},x_{n}\right]  -\mathbb{E}_{\theta}\left[  \left.  T_{\theta
,n}\left(  X_{0:n}\right)  \right\vert y_{0:n-1}\right]  \right)  .
\label{eq:predicFiltGrad}%
\end{equation}
We call $\zeta_{\theta,n}$\ the derivative of $\eta_{\theta,n}$.

Given the particle approximation (\ref{eq:smcJointDen}) of $\mathbb{Q}%
_{\theta,n}$, it is straightforward to construct a particle\ approximation of
$\zeta_{\theta,n}$:
\begin{equation}
\zeta_{\theta,n}^{\text{p},N}(dx_{n})=\sum_{i=1}^{N}\frac{1}{N}\left(
T_{\theta,n}(X_{0:n}^{(i)})-\frac{1}{N}\sum_{j=1}^{N}T_{\theta,n}%
(X_{0:n}^{(j)})\right)  \delta_{X_{n}^{\left(  i\right)  }}\left(
dx_{n}\right)  . \label{eq:pathSmcFiltGrad}%
\end{equation}
This approximation is also referred to as the path space method. Such
approximations were implicitly proposed in \citet{cerou2001} and
\citet{tadicdoucet2003} and there are several reasons why this estimate
appears attractive. Firstly, even with the resampling steps in the
construction of $\mathbb{Q}_{\theta,n}^{\text{p},N}$,\ $\zeta_{\theta
,n}^{\text{p},N}$ can be computed recursively.\ Secondly, there is no need to
store the entire ancestry of each particle, i.e. $\left\{  X_{0:n}^{\left(
i\right)  }\right\}  _{1\leq i\leq N}$, and thus the memory requirement to
construct $\zeta_{\theta,n}^{\text{p},N}$\ is constant over time. Thirdly, the
computational cost per time is $\mathcal{O}(N)$. However, as $\mathbb{Q}%
_{\theta,n}^{\text{p},N}$\ suffers from the particle path degeneracy problem,
we expect the approximation $\zeta_{\theta,n}^{\text{p},N}$\ to worsen over
time. This was indeed observed in numerical examples in \citet{PDS11}\ and it
was conjectured that the asymptotic variance (i.e. as $N\rightarrow\infty$) of
$\zeta_{\theta,n}^{\text{p},N}$ for bounded integrands would increase linearly
with $n$ even under strong mixing assumptions. This is now proven in this article.

An alternative particle method to approximate $\{\zeta_{\theta,n}\}_{n\geq0}$
has been proposed in \citet{PDS05, PDS11}. We now reinterpret this method
using the representation in (\ref{eq:predicFiltGrad}) and a different particle
approximation of $\mathbb{Q}_{\theta,n}$\ that avoids the path degeneracy problem.

The measure $\mathbb{Q}_{\theta,n}$ admits the following backward
representation
\[
\mathbb{Q}_{\theta,n}(dx_{0:n})=\eta_{\theta,n}(dx_{n})\prod\limits_{k=n}%
^{1}M_{\theta,k}(x_{k},dx_{k-1})
\]
and the corresponding particle approximation of $\mathbb{Q}_{\theta,n}$\ is
given by
\[
\mathbb{Q}_{\theta,n}^{N}(dx_{0:n})=\eta_{\theta,n}^{N}(dx_{n})\prod
\limits_{k=n}^{1}M_{\theta,k}^{N}(x_{k},dx_{k-1})
\]
where $M_{\theta,k}^{N}$\ was defined in (\ref{eq:Mparticle}). This now gives
rise to the following particle approximation of $\zeta_{\theta,n}$
\citep{PDS05, PDS11}:
\[
\zeta_{\theta,n}^{N}(\varphi_{n})=\int\mathbb{Q}_{\theta,n}^{N}(dx_{0:n}%
)T_{\theta,n}(x_{0:n})\left(  \varphi_{n}(x_{n})-\eta_{\theta,n}^{N}%
(\varphi_{n})\right)
\]
and indeed $\eta_{\theta,n}^{N}(\varphi_{n})=\int\mathbb{Q}_{\theta,n}%
^{N}(dx_{0:n})\varphi_{n}(x_{n})$. It is apparent that $\mathbb{Q}_{\theta
,n}^{N}$\ constructed using this backward method avoids the degeneracy in
paths. It is even possible to compute $\zeta_{\theta,n}^{N}$\ recursively as
detailed in Algorithm 1; since a recursion for $\eta_{\theta,n}$\ is already
available, it is apparent from (\ref{eq:predicFiltGrad}) that what remains is
to specify a recursion for $\mathbb{E}_{\theta}\left[  \left.  T_{\theta
,n}\left(  X_{0:n}\right)  \right\vert y_{0:n-1},x_{n}\right]  $. Let
$\overline{T}_{\theta,n}(x_{n})$\ denote this term, then for $n\geq1$,
\begin{align*}
\overline{T}_{\theta,n}(x_{n})  &  =\mathbb{E}_{\theta}\left[  \left.
T_{\theta,n}\left(  X_{0:n}\right)  \right\vert y_{0:n-1},x_{n}\right] \\
&  =\mathbb{E}_{\theta}\left[  \left.  T_{\theta,n-1}\left(  X_{0:n-1}\right)
\right\vert y_{0:n-1},x_{n}\right]  +\mathbb{E}_{\theta}\left[  \left.
t_{\theta,n}\left(  X_{n-1},X_{n}\right)  \right\vert y_{0:n-1},x_{n}\right]
\\
&  =\int M_{\theta,n}(x_{n},dx_{n-1})\left(  \mathbb{E}_{\theta}\left[
\left.  T_{\theta,n-1}\left(  X_{0:n-1}\right)  \right\vert y_{0:n-2}%
,x_{n-1}\right]  +t_{\theta,n}\left(  x_{n-1},x_{n}\right)  \right) \\
&  =\int M_{\theta,n}(x_{n},dx_{n-1})\left(  \overline{T}_{\theta,n-1}%
(x_{n-1})+t_{\theta,n}\left(  x_{n-1},x_{n}\right)  \right)
\end{align*}
where $\overline{T}_{\theta,0}(x_{0})=t_{\theta,0}(x_{0})$. Algorithm 1
computes $\zeta_{\theta,n}^{N}$\ recursively in time by computing $\left(
\overline{T}_{\theta,n},\eta_{\theta,n}\right)$ and is
initialized with $\overline{T}_{\theta,0}^{(i)}=t_{\theta,0}(X_{0}^{(i)})$
(see (\ref{eq:initSmallT})) where\ $\left\{  X_{0}^{\left(  i\right)
}\right\}  _{1\leq i\leq N}$\ are samples from $\pi_{\theta}(x_{0})$.

\noindent\hrulefill\vspace{-0.35cm}

\noindent

\begin{center}
\textbf{Algorithm 1: A Particle Method to Compute the
Filter Derivative} \label{alg:smcFiltGrad}
\end{center}

$\bullet$\textsf{ Assume at time }$n-1$\textsf{ that approximate samples}
$\left\{  X_{n-1}^{\left(  i\right)  }\right\}  _{1\leq i\leq N}%
$\textsf{\ from }$\eta_{\theta,n-1}$\textsf{\ and approximations }$\left\{
\overline{T}_{\theta,n-1}^{(i)}\right\}  _{1\leq i\leq N}$\textsf{\ of
}$\left\{  \overline{T}_{\theta,n-1}\left(  X_{n-1}^{\left(  i\right)
}\right)  \right\}  _{1\leq i\leq N}$\textsf{\ are available.}

$\bullet$\textsf{ At time }$n$\textsf{, sample }$\left\{  X_{n}^{\left(
i\right)  }\right\}  _{1\leq i\leq N}$\textsf{\ independently from the
mixture}%
\begin{equation}
\frac{\sum_{j=1}^{N}f_{\theta}\left(  x_{n}|X_{n-1}^{(j)}\right)  g_{\theta
}\left(  \left.  y_{n-1}\right\vert X_{n-1}^{(j)}\right)  }{\sum_{j=1}%
^{N}g_{\theta}\left(  \left.  y_{n-1}\right\vert X_{n-1}^{(j)}\right)  }
\label{eq:bootstrap}%
\end{equation}
\textsf{and then compute }$\left\{  \overline{T}_{\theta,n}^{(i)}\right\}
_{1\leq i\leq N}$\textsf{ and} $\zeta_{\theta,n}^{N}$ \textsf{as follows:}
\begin{align}
&  \overline{T}_{\theta,n}^{(i)}=\frac{\sum_{j=1}^{N}\left(  \overline
{T}_{\theta,n-1}^{(j)}+t_{\theta,n}\left(  X_{n-1}^{(j)},X_{n}^{(i)}\right)
\right)  f_{\theta}\left(  X_{n}^{(i)}|X_{n-1}^{(j)}\right)  g_{\theta}\left(
\left.  y_{n-1}\right\vert X_{n-1}^{(j)}\right)  }{\sum_{j=1}^{N}f_{\theta
}\left(  X_{n}^{(i)}|X_{n-1}^{(j)}\right)  g_{\theta}\left(  \left.
y_{n-1}\right\vert X_{n-1}^{(j)}\right)  },\label{eq:Tupdate}\\
&  \zeta_{\theta,n}^{N}(dx_{n})=\frac{1}{N}\sum_{i=1}^{N}\left(  \overline
{T}_{\theta,n}^{(i)}-\frac{1}{N}\sum_{j=1}^{N}\overline{T}_{\theta,n}%
^{(j)}\right)  \delta_{X_{n}^{\left(  i\right)  }}(dx_{n}).
\label{eq:filtGradUpdate}%
\end{align}

Algorithm 1 uses the bootstrap particle filter of
\citet{gordon93}. Note that any SMC implementation of $\{\eta_{\theta
,n}\}_{n\geq0}$\ may be used, e.g. the auxiliary SMC method of \citet{Pitt99}
or sequential importance resampling with a tailored proposal distribution
\citep{Douc01}. It was conjectured in \citet{PDS11} that the asymptotic
variance of $\zeta_{\theta,n}^{N}(\varphi)$ for bounded integrands $\varphi$ is uniformly bounded
w.r.t.\ $n$ under mixing assumptions. This is established in this article.

\section{Stability of the particle estimates\label{sec:stability}}

The convergence analysis of $\zeta_{\theta,n}^{N}$\ (and $\zeta_{\theta
,n}^{\text{p},N}$ for performance comparison) will largely focus on the
convergence analysis of the $N$-particle measures $\mathbb{Q}_{\theta,n}^{N}$
(and correspondingly $\mathbb{Q}_{\theta,n}^{\text{p},N}$) towards their
limiting values $\mathbb{Q}_{\theta,n}$, as $N\rightarrow\infty$, which is in
turn intimately related to the convergence of the flow of particle measures
$\left\{  \eta_{\theta,n}^{N}\right\}  _{n\geq0}$ towards their limiting
measures $\left\{  \eta_{\theta,n}\right\}  _{n\geq0}$. The $\mathbb{L}_{r}$
error bounds and the central limit theorem presented here have been derived
using the techniques developed in~\citet{delmoral2004} for the convergence
analysis of the particle occupation measures $\eta_{\theta,n}^{N}$ . One of
the central objects in this analysis is the local sampling errors defined as
\begin{equation}
V_{\theta,n}^{N}=\sqrt{N}\left(  \eta_{\theta,n}^{N}-\Phi_{\theta,n}%
(\eta_{\theta,n-1}^{N})\right)  \label{eq:localSampErr}%
\end{equation}
The fluctuation and the deviations of these centered random measures can be
estimated using non-asymptotic Kintchine's type $\mathbb{L}_{r}$-inequalities,
as well as Hoeffding's or Bernstein's type exponential
deviations~\citep{delmoral2004,DeR09}. In \citet{DeM00} it is proved that
these random perturbations behave asymptotically as Gaussian random
perturbations; see Lemma \ref{lem:locErrClt} in the Appendix for more details.
In the proof of Theorem \ref{theo:clt} (a supporting theorem) in the Appendix
we provide some key decompositions expressing the deviation of the particle
measures $\mathbb{Q}_{\theta,n}^{N}$ around its limiting value $\mathbb{Q}%
_{\theta,n}$ in terms of the local sampling errors $(V_{\theta,0}^{N}%
,\ldots,V_{\theta,n}^{N})$. These decompositions are key to deriving the
$\mathbb{L}_{r}$-mean error bounds and central limit theorems for the filter derivative.

The following regularity conditions are assumed.

\textbf{(A)} The dominating measures $dx$ on $\mathcal{X}$ and $dy$ on
$\mathcal{Y}$ are finite, and there exist constants $0<\rho,\delta,c<\infty$
such that for all $(x,x^{\prime},y,\theta)\in\mathcal{X}^{2}\times
\mathcal{Y}\times\Theta$, the derivatives of $\pi_{\theta}(x)$, $f_{\theta
}\left(  x^{\prime}|x\right)  $ and $g_{\theta}\left(  y|x\right)  $ with
respect to $\theta$ exists and
\begin{align}
&  \rho^{-1}\leq f_{\theta}\left(  x^{\prime}|x\right)  \leq\rho,\quad
\delta^{-1}\leq g_{\theta}\left(  y|x\right)  \leq\delta,\label{eq:assAeq1}\\
&  \left\vert \nabla\log\pi_{\theta}\left(  x\right)  \right\vert
\vee\left\vert \nabla\log f_{\theta}\left(  x^{\prime}|x\right)  \right\vert
\vee\left\vert \nabla\log g_{\theta}\left(  y|x\right)  \right\vert \leq c.
\label{eq:assAeq2}%
\end{align}
Admittedly, these conditions are restrictive and fail to hold for many models
in practice. (Exceptions would include applications with a compact
state-space.)
However, they are typically made to establish the time uniform stability of
particle approximations of the filter \citep{delmoral2004, cappe2005} as they
lead to simpler and more transparent proofs. Also, we observe that the
behaviors predicted by the Theorems below seem to hold in practice even in
cases where the state-space models do not satisfy these assumptions; see
Section \ref{sec:application}. Thus the results in this paper can be seen to
provide a qualitative guide to the behavior of the particle approximation even
in the more general setting.

For each parameter vector $\theta\in\Theta$, realization of observations
$y=\{y_{n}\}_{n\geq0}$ and particle number $N$, let $(\Omega,\mathcal{F}%
,\mathbb{P}_{\theta}^{y})$\ be the underlying probability space of the random
process $\{(X_{n}^{(1)},\ldots,X_{n}^{(N)})\}_{n\geq0}$ comprised of the
particle system only. Let $\mathbb{E}_{\theta}^{y}$\ the corresponding
expectation operator computed with respect to $\mathbb{P}_{\theta}^{y}$. The
first of the two main results in this section is a time uniform non-asymptotic
error bound.

\begin{theo}
\label{theo:Lperror} Assume (A). For any $r\geq1$, there exists a constant
$C_{r}$ such that for all $\theta\in\Theta$, $y=\{y_{n}\}_{n\geq0}$, $n\geq0$,
$N\geq1$, and $\varphi_{n}\in\mbox{Osc}_{1}(\mathcal{X})$,
\[
\sqrt{N}\mathbb{E}_{\theta}^{y}\left\{  \left\vert \zeta_{\theta,n}%
^{N}(\varphi_{n})-\zeta_{\theta,n}(\varphi_{n})\right\vert ^{r}\right\}
^{\frac{1}{r}}\leq C_{r}
\]

\end{theo}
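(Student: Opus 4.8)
The plan is to exploit the fact that $\zeta_{\theta,n}(\varphi_n)$ and $\zeta_{\theta,n}^N(\varphi_n)$ are, respectively, a \emph{smoothing covariance} and an \emph{empirical smoothing covariance} of the additive functional $T_{\theta,n}$ against the centered test function, and to control the error by a mixing-induced geometric decay rather than by the crude bound $\Vert T_{\theta,n}\Vert=O(n)$. Writing $\bar\varphi_n^N=\varphi_n-\eta_{\theta,n}^N(\varphi_n)$ and $\bar\varphi_n=\varphi_n-\eta_{\theta,n}(\varphi_n)$, we have $\zeta_{\theta,n}^N(\varphi_n)=\mathbb{Q}_{\theta,n}^N(T_{\theta,n}\bar\varphi_n^N)$ and $\zeta_{\theta,n}(\varphi_n)=\mathbb{Q}_{\theta,n}(T_{\theta,n}\bar\varphi_n)$, and since $T_{\theta,n}=\sum_{j=0}^n t_{\theta,j}$, I would first decompose the error over the summands, $\zeta_{\theta,n}^N(\varphi_n)-\zeta_{\theta,n}(\varphi_n)=\sum_{j=0}^n\big[\mathbb{Q}_{\theta,n}^N(t_{\theta,j}\bar\varphi_n^N)-\mathbb{Q}_{\theta,n}(t_{\theta,j}\bar\varphi_n)\big]$. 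Each bracketed term is the error in one covariance between a function localized at times $j-1,j$ and the centered function $\varphi_n$ at time $n$; under (A) the true covariance $\mathbb{Q}_{\theta,n}(t_{\theta,j}\bar\varphi_n)$ decays like $\gamma^{\,n-j}$ for some $\gamma=\gamma(\rho,\delta)<1$, which is precisely why $\zeta_{\theta,n}(\varphi_n)$ is uniformly bounded, and the goal is to inherit this decay at the level of the error.

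Next I would reduce each covariance error to the local sampling errors $V_{\theta,k}^N$, $0\le k\le n$, using the decomposition established for Theorem \ref{theo:clt}, which expresses $\mathbb{Q}_{\theta,n}^N-\mathbb{Q}_{\theta,n}$ (tested against a path functional) as $\frac{1}{\sqrt N}\sum_{k=0}^n$ of contributions in which $V_{\theta,k}^N$ is propagated to the whole path by the operators $D_{\theta,k,n}$ and $Q_{\theta,k,n}$, plus products-of-errors of higher order. Applied to the centered functional $t_{\theta,j}\bar\varphi_n$ (centering by the \emph{true} $\eta_{\theta,n}$ and absorbing the difference $\bar\varphi_n^N-\bar\varphi_n$ into the same machinery), this writes the $j$-th term as $\frac{1}{\sqrt N}\sum_{k=0}^n V_{\theta,k}^N(H_{k,n}^{(j)})$ up to remainders, for explicit functions $H_{k,n}^{(j)}$ on $\mathcal{X}$. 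The Khinchine-type $\mathbb{L}_r$ inequality for the local errors \citep{delmoral2004} gives $\Vert V_{\theta,k}^N(H_{k,n}^{(j)})\Vert_{\mathbb{L}_r}\le a_r\,\mathrm{osc}(H_{k,n}^{(j)})$ with $a_r$ universal, so after Minkowski the problem reduces to showing $\sum_{j=0}^n\sum_{k=0}^n\mathrm{osc}(H_{k,n}^{(j)})$ is bounded uniformly in $n$.

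This double-sum estimate is the analytic heart and the main obstacle. Under (A) each one-step transition is uniformly mixing, so the backward kernels $M_{\theta,j}$ and the normalized forward semigroups have Dobrushin coefficient at most $\gamma<1$, and $\beta$ contracts multiplicatively, $\beta(M_{\theta,k+1}\cdots M_{\theta,j})\le\gamma^{\,j-k}$. The difficulty is that $H_{k,n}^{(j)}$ couples the pivot $x_k$ both to the region $\{j-1,j\}$ carrying $t_{\theta,j}$ (through the backward kernels if $j\le k$ and the forward kernels if $j>k$) and to time $n$ carrying the \emph{centered} $\bar\varphi_n$. The key is that the centering of $\varphi_n$ makes its conditional expectation contract over $n-\max(k,j)$ steps, supplying a factor $\gamma^{\,n-\max(k,j)}$, while the localization of $t_{\theta,j}$ supplies $\gamma^{|k-j|}$; together $\mathrm{osc}(H_{k,n}^{(j)})\le C\,\gamma^{|k-j|}\gamma^{\,n-\max(k,j)}$ up to constants depending only on $\rho,\delta,c$. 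Summing this geometric array first over $k$ and then over $j$ is bounded independently of $n$. The subtle point I expect to fight is that this cancellation is only visible if the covariance and centering structure is kept intact: splitting $t_{\theta,j}\bar\varphi_n^N$ into $\mathbb{Q}_{\theta,n}^N(t_{\theta,j}\varphi_n)$ and $\mathbb{Q}_{\theta,n}^N(t_{\theta,j})\eta_{\theta,n}^N(\varphi_n)$ separately would reintroduce a non-decaying $O(n)$ summation, so the geometric decay must be extracted from the centered functional directly.

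Finally I would dispose of the remainder terms, which are of two kinds: the products-of-errors terms in the decomposition, and the nonlinearity arising because the particle backward kernels $M_{\theta,k}^N$ replacing $M_{\theta,k}$ make $\mathbb{Q}_{\theta,n}^N$ a nonlinear functional of the filters $\{\eta_{\theta,\ell}^N\}_{\ell\le n}$. Both are handled by bootstrapping the time-uniform $\mathbb{L}_r$ bounds for the filter error $\eta_{\theta,\ell}^N-\eta_{\theta,\ell}$ available from \citep{delmoral2004}: feeding these back into the same Dobrushin contraction estimates shows that the backward-kernel perturbations $M_{\theta,k}^N-M_{\theta,k}$ contribute only through earlier filter errors with the same geometric damping, so that the collected remainder is $O(N^{-1})$ in $\mathbb{L}_r$ uniformly in $n$ and hence negligible after multiplication by $\sqrt N$. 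Combining the leading uniformly summable geometric array with this negligible remainder yields the constant $C_r$ and completes the proof.
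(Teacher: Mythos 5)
Your proposal is correct and follows essentially the same route as the paper: the same decomposition over the summands $t_{\theta,k}$ of $T_{\theta,n}$, the same reduction to local sampling errors controlled by the Khinchine-type inequality (the paper implements this via telescoping sums of the normalized measures $\eta_{\theta,p}^{N}D_{\theta,p,n}^{N}/\eta_{\theta,p}^{N}D_{\theta,p,n}^{N}(1)$, which is equivalent to your propagated-$V_{\theta,k}^{N}$ formulation), and the same key estimate $\mathrm{osc}\le C\,\overline{\rho}^{\,|k-j|}\,\overline{\rho}^{\,n-\max(k,j)}$ obtained by keeping the product of the two centered factors intact so that the backward contraction on $t_{\theta,j}$ and the forward contraction on the centered $\varphi_{n}$ multiply. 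Your identification of the point that splitting the covariance would reintroduce an $O(n)$ growth is exactly the mechanism exploited in Lemmas \ref{theoremLperrorProofPart1} and \ref{theoremLperrorProofPart2}.
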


Let $\{V_{\theta,n}\}_{n\geq0}$ be a sequence of independent centered Gaussian
random fields defined as follows. For any sequence $\{\varphi_{n}\}_{n\geq0}%
$\ in $\mathcal{B}(\mathcal{X})$ and any $p\geq0$, $\{V_{\theta,n}(\varphi
_{n})\}_{n=0}^{p}$ is a collection of independent zero-mean Gaussian random
variables with variances given by
\begin{equation}
\eta_{\theta,n}(\varphi_{n}^{2})-\eta_{\theta,n}(\varphi_{n})^{2}.
\label{eq:localSampErrLimit}%
\end{equation}

\begin{theo}
\label{theo:gradClt}Assume (A). There exists a constant $C<\infty$ such
that for any $\theta\in\Theta$, $y=\{y_{n}\}_{n\geq0}$, $n\geq0$ and
$\varphi_{n}\in\mbox{Osc}_{1}(\mathcal{X})$, $\sqrt{N}\left(  \zeta_{\theta
,n}^{N}-\zeta_{\theta,n}\right)  (\varphi_{n})$ converges in law, as
$N\rightarrow\infty$, to the centered Gaussian random variable
\begin{equation}
\sum_{p=0}^{n}V_{\theta,p}\left(  G_{\theta,p,n}~\frac{D_{\theta
,p,n}(F_{\theta,n}-\mathbb{Q}_{\theta,n}(F_{\theta,n}))}{D_{\theta,p,n}%
(1)}\right)  \label{eq:gradClt}%
\end{equation}
whose variance is uniformly bounded above by $C$\ where
\[
F_{\theta,n}=\left(  \varphi_{n}-\mathbb{Q}_{\theta,n}(\varphi_{n})\right)
\left(  T_{\theta,n}-\mathbb{Q}_{\theta,n}(T_{\theta,n})\right)  .
\]

\end{theo}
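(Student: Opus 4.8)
The plan is to reduce the error of the derivative estimate to a path-space measure error against a single (time-dependent) test function, then to apply the supporting path-space central limit theorem, and finally to read off the uniform variance bound from the mixing-induced forgetting. First I would use the backward representation to write $\zeta_{\theta,n}^{N}$ as a covariance under the path measure. Since $\eta_{\theta,n}^{N}(\varphi_{n})=\mathbb{Q}_{\theta,n}^{N}(\varphi_{n})$,
\[
\zeta_{\theta,n}^{N}(\varphi_{n})=\mathbb{Q}_{\theta,n}^{N}(\varphi_{n}T_{\theta,n})-\mathbb{Q}_{\theta,n}^{N}(\varphi_{n})\,\mathbb{Q}_{\theta,n}^{N}(T_{\theta,n}),
\]
and the same identity with $\mathbb{Q}_{\theta,n}$ shows that $\zeta_{\theta,n}(\varphi_{n})=\mathbb{Q}_{\theta,n}(F_{\theta,n})$. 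Writing $\epsilon=\mathbb{Q}_{\theta,n}^{N}-\mathbb{Q}_{\theta,n}$ and expanding both covariances, the terms linear in $\epsilon$ collapse, because $\epsilon$ annihilates constants, to
\[
\zeta_{\theta,n}^{N}(\varphi_{n})-\zeta_{\theta,n}(\varphi_{n})=\epsilon(F_{\theta,n})-\epsilon(\varphi_{n})\,\epsilon(T_{\theta,n}).
\]
After multiplying by $\sqrt{N}$ the quadratic remainder is $o_{\mathbb{P}}(1)$: the factor $\sqrt{N}\,\epsilon(\varphi_{n})$ is asymptotically Gaussian, while $\epsilon(T_{\theta,n})\rightarrow0$ in probability, since for fixed $n$ the function $T_{\theta,n}$ is bounded and standard $\mathbb{L}_{2}$ path-space convergence applies (cf.\ Theorem \ref{theo:Lperror}). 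By Slutsky's lemma it then suffices to identify the limit of $\sqrt{N}\,(\mathbb{Q}_{\theta,n}^{N}-\mathbb{Q}_{\theta,n})(F_{\theta,n})$.

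Second, I would invoke the supporting path-space central limit theorem (Theorem \ref{theo:clt}), which expresses the fluctuations of $\mathbb{Q}_{\theta,n}^{N}$ around $\mathbb{Q}_{\theta,n}$, for any bounded $F_{n}$, through the local sampling errors and their Gaussian limits $\{V_{\theta,p}\}$. Applied with $F_{n}=F_{\theta,n}$, which is bounded for each fixed $n$, this gives convergence in law to the stated Gaussian (\ref{eq:gradClt}). Combined with the first step this establishes the asserted limit law, and the only remaining task is the bound on its variance, uniform in $n$.

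Third, for the variance bound I would first simplify the influence functions in (\ref{eq:gradClt}). Using $D_{\theta,p,n}(1)=Q_{\theta,p,n}(1)$ and the definition (\ref{DGP}) of $G_{\theta,p,n}$, the summand reduces to
\[
W_{p,n}:=G_{\theta,p,n}\,\frac{D_{\theta,p,n}(\bar F_{\theta,n})}{D_{\theta,p,n}(1)}=\frac{D_{\theta,p,n}(\bar F_{\theta,n})}{\eta_{\theta,p}Q_{\theta,p,n}(1)},\qquad \bar F_{\theta,n}:=F_{\theta,n}-\mathbb{Q}_{\theta,n}(F_{\theta,n}),
\]
whose normalisation is a scalar bounded away from $0$ and $\infty$ under (A). The variance in (\ref{eq:gradClt}) is then $\sum_{p=0}^{n}\mathrm{Var}_{\eta_{\theta,p}}(W_{p,n})$, and the main obstacle is that $T_{\theta,n}=\sum_{k=0}^{n}t_{\theta,k}$ grows linearly in $n$, so a crude estimate would let this sum diverge. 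To control it I would use the additive decomposition
\[
F_{\theta,n}=\sum_{k=0}^{n}\big(\varphi_{n}-\mathbb{Q}_{\theta,n}(\varphi_{n})\big)\big(t_{\theta,k}-\mathbb{Q}_{\theta,n}(t_{\theta,k})\big)
\]
together with the linearity of $D_{\theta,p,n}$, so that $W_{p,n}=\sum_{k}W_{p,n}^{(k)}$. The key estimates are geometric-forgetting bounds derived from the Dobrushin contraction $\beta(\cdot)\leq1-\varepsilon$ that (A) guarantees for the forward semigroups $Q_{\theta,j,n}$ and the backward kernels $M_{\theta,j}$: the terminal centering $\varphi_{n}-\mathbb{Q}_{\theta,n}(\varphi_{n})$ forces a factor of order $\beta^{\,n-p}$ into each $W_{p,n}^{(k)}$, while the centered increment $t_{\theta,k}-\mathbb{Q}_{\theta,n}(t_{\theta,k})$ contributes a further geometric factor in the lag between $p$ and $k$. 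The decisive point is that the terminal centering localises the effective contributions near time $n$, cancelling the linear-in-$n$ accumulation that afflicts the raw additive functional; summing the resulting geometric series over $0\leq k\leq n$ and $0\leq p\leq n$ then produces a bound independent of $n$.

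I expect the careful bookkeeping of these decay estimates to be the most delicate part: one must separate the cases $k\leq p$ and $k>p$, in which the increment $t_{\theta,k}$ is propagated respectively by the contracting backward kernels $M_{\theta,j}$ and by the forward Feynman--Kac semigroup, and combine the two-sided contraction factors with the $\beta^{\,n-p}$ factor from the terminal centering so that both the sum over $k$ for fixed $p$ and the subsequent sum over $p$ remain summable uniformly in $n$. Getting these exponents to line up correctly, rather than losing a polynomial-in-$n$ factor at an intermediate step, is where the argument must be handled with care.
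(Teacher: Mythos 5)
Your proposal is correct and follows essentially the same route as the paper: the identity $\zeta_{\theta,n}^{N}(\varphi_n)-\zeta_{\theta,n}(\varphi_n)=\epsilon(F_{\theta,n})-\epsilon(\varphi_n)\epsilon(T_{\theta,n})$ with the quadratic term killed by Slutsky, the application of Theorem \ref{theo:clt} to $F_{\theta,n}$, and the variance bound via the additive decomposition over $k$ with the case split $k\leq p$ versus $k>p$ combining backward-kernel and forward-semigroup contractions are all exactly the paper's argument. (The paper's bookkeeping does produce an intermediate factor $(n-p)\overline{\rho}^{\,n-p-1}$ in $\sup|\phi_p|$, which is harmless after squaring and summing over $p$, confirming your closing concern is manageable.)
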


The proofs of both these results are in the Appendix.

As a comparison, we quantify the variance of the particle estimate of the
filter derivative computed using the path-based method (see
(\ref{eq:pathSmcFiltGrad}).) Consider the following simplified example that
serves to illustrate the point. Let $g_{\theta}\left(  \left.  y\right\vert
x\right)  =g\left(  \left.  y\right\vert x\right)  $ (that is $\theta
$-independent), $f_{\theta}\left(  \left.  x_{n}\right\vert x_{n-1}\right)
=\pi_{\theta}(x_{n})$, where $\pi_{\theta}$\ is the initial distribution.
(Note that $f_{\theta}$\ in this case satisfies a rephrased version of
(\ref{eq:assAeq1}) under which the conclusion of Theorem \ref{theo:gradClt}%
\ also holds.) Also, consider the sequence of repeated observations
$y_{0}=y_{1}=\cdots$ where $y_{0}$ is arbitrary. Applying Lemma
\ref{lem:pathClt}\ (in the Appendix) that characterizes the limiting
distribution of $\sqrt{N}(\mathbb{Q}_{\theta,n}^{\text{p},N}-\mathbb{Q}%
_{\theta,n})$ to this special case results in $\sqrt{N}(\zeta_{\theta
,n}^{\text{p},N}-\zeta_{\theta,n})(\varphi)$ (see (\ref{eq:pathSmcFiltGrad}%
))\ having an asymptotic distribution which is Gaussian with mean zero and
variance
\[
n\ \times\pi_{\theta}(\overline{\varphi}^{2})\pi_{\theta}^{\prime}\left[
(\nabla\log\pi_{\theta})^{2}\right]  +\pi_{\theta}\left[  \overline{\varphi
}^{2}(\nabla\log\pi_{\theta})^{2}\right]  -\nabla\pi_{\theta}(\varphi)^{2}%
\]
where $\overline{\varphi}=\varphi-\pi_{\theta}(\varphi)$, $\pi_{\theta
}^{\prime}(x)=\pi_{\theta}(x)g\left(  \left.  y_{0}\right\vert x\right)
/\pi_{\theta}\left(  g\left(  \left.  y_{0}\right\vert \cdot\right)  \right)
$. This variance increases linearly with time in contrast to the time bounded
variance of Theorem \ref{theo:gradClt}.

\section{Application to recursive parameter estimation}

\label{sec:application} Being able to compute $\{\zeta_{\theta,n}\}_{n\geq0}$
is particularly useful when performing online static parameter estimation for
state-space models using Recursive Maximum Likelihood (RML) techniques
\citep{legland1997,PDS05,PDS11}; see also \citet{Kantas09} for a general
review of available particle methods based solutions, including Bayesian ones,
for this problem. The computed filter derivative may also be useful in other
areas; e.g. see \cite{CDM08} for an application in control.

\subsection{Recursive Maximum Likelihood}

Let $\theta^{\ast}$ be the true static parameter generating the observed data
$\{y_{n}\}_{n\geq0}$. Given a finite record of observations $y_{0:T}$, the
log-likelihood may be maximized with the following steepest ascent algorithm:
\begin{equation}
\theta_{k}=\theta_{k-1}+\gamma_{k}\left.  \nabla\log p_{\theta}(y_{0:T}%
)\right\vert _{\theta=\theta_{k-1}},\quad k\geq1, \label{eq:MLbyGradient}%
\end{equation}
where $\theta_{0}$ is some arbitrary initial guess of $\theta^{\ast}$,
$\left.  \nabla\log p_{\theta}(y_{0:T})\right\vert _{\theta=\theta_{k-1}}%
$\ denotes the gradient of the log-likelihood evaluated at the current
parameter estimate and $\{\gamma_{k}\}_{k\geq1}$ is a decreasing positive
real-valued step-size sequence, which should satisfy the following
constraints:
\[
\sum_{k=1}^{\infty}\gamma_{k}=\infty,\qquad\sum_{k=1}^{\infty}\gamma_{k}%
^{2}<\infty.
\]
Although $\nabla\log p_{\theta}(y_{0:T})$\ can be computed using
(\ref{eq:increGrad}), the computation cost can be prohibitive for a long data
record since each iteration of (\ref{eq:MLbyGradient}) would require a
complete browse through the $T+1$ data points. A more attractive alternative
would be a recursive procedure in which the data is run through once only
sequentially. For example, consider the following update scheme:
\begin{equation}
\theta_{n}=\theta_{n-1}+\gamma_{n}\left.  \nabla\log p_{\theta}(y_{n}%
|y_{0:n-1})\right\vert _{\theta=\theta_{n-1}} \label{eq:almostOnlineRML}%
\end{equation}
where $\left.  \nabla\log p_{\theta}(y_{n}|y_{0:n-1})\right\vert
_{\theta=\theta_{n-1}}$ denotes the gradient of $\log p_{\theta}%
(y_{n}|y_{0:n-1})$ evaluated at the current parameter estimate; that is upon
receiving $y_{n}$, $\theta_{n-1}$\ is updated in the direction of ascent of
the conditional density of this new observation. Since we have
\begin{equation}
\left.  \nabla\log p_{\theta}(y_{n}|y_{0:n-1})\right\vert _{\theta
=\theta_{n-1}}=\frac{\int dx_{n}\eta_{\theta_{n-1},n}(x_{n})\left.  \nabla
g_{\theta}\left(  \left.  y_{n}\right\vert x_{n}\right)  \right\vert
_{\theta_{n-1}}+\int dx_{n}\left(  \left.  y_{n}\right\vert x_{n}\right)
\zeta_{\theta_{n-1},n}(x_{n})g_{\theta_{n-1}}}{\int dx_{n}\eta_{\theta
_{n-1},n}(x_{n})g_{\theta_{n-1}}\left(  \left.  y_{n}\right\vert x_{n}\right)
}, \label{eq:increGrad}%
\end{equation}
this clearly requires the filter derivative $\zeta_{\theta,n}$. The algorithm
in the present form is not suitable for online implementation as it requires
re-computing the filter and its derivative at the value $\theta=\theta_{n-1}$
from time zero. The RML procedure uses an approximation of (\ref{eq:increGrad}%
) which is obtained by updating the filter and its derivative using the
parameter value $\theta_{n-1}$ at time $n$; we refer the reader to
\citet{legland1997} for details. The asymptotic properties of the RML
algorithm, i.e.\ the behavior of $\theta_{n}$\ in the limit as $n$\ goes to
infinity, has been studied in the case of an i.i.d.\ hidden process by
\citet{titterington1984} and \citet{legland1997} for a finite state-space
hidden Markov model. It is shown in \citet{legland1997} that under regularity
conditions this algorithm converges towards a local maximum of the average
log-likelihood and that this average log-likelihood is maximized at
$\theta^{\ast}$. A particle version of the RML algorithm of
\citet{legland1997}\ that uses Algorithm 1's estimate of $\eta_{\theta,n}$\ is
presented as Algorithm 2.%

\noindent\hrulefill
\vspace{-0.25cm}

\noindent

\begin{center}
\textbf{Algorithm 2: Particle Recursive Maximum Likelihood \label{alg:smcRml}}

\end{center}

$\bullet$\textsf{\ At time }$n-1$\textsf{ we are given }$y_{0:n-1}%
$,\textsf{\ the previous estimate }$\theta_{n-1}$\textsf{\ of }$\theta^{\ast}%
$\textsf{ and }$\{(X_{n-1}^{\left(  i\right)  },\overline{T}_{n-1}%
^{(i)})\}_{i=1}^{N}$\textsf{.}

$\bullet$\textsf{\ At time }$n$\textsf{, upon receiving }$y_{n}$\textsf{,
sample }$\left\{  X_{n}^{\left(  i\right)  }\right\}  _{1\leq i\leq N}%
$\textsf{\ independently from (\ref{eq:bootstrap}) using parameter }%
$\theta=\theta_{n-1}$\textsf{ to obtain }%
\[
\eta_{n}^{N}(dx_{n})=\frac{1}{N}\sum_{i=1}^{N}\delta_{X_{n}^{\left(  i\right)
}}(dx_{n})
\]
\textsf{and then compute}
\begin{align}
&  \overline{T}_{n}^{(i)}=\frac{\sum_{j=1}^{N}\left(  \overline{T}_{n-1}%
^{(j)}+t_{\theta_{n-1},n}\left(  X_{n-1}^{(j)},X_{n}^{(i)}\right)  \right)
f_{\theta_{n-1}}\left(  X_{n}^{(i)}|X_{n-1}^{(j)}\right)  g_{\theta_{n-1}%
}\left(  \left.  y_{n-1}\right\vert X_{n-1}^{(j)}\right)  }{\sum_{j=1}%
^{N}f_{\theta_{n-1}}\left(  X_{n}^{(i)}|X_{n-1}^{(j)}\right)  g_{\theta_{n-1}%
}\left(  \left.  y_{n-1}\right\vert X_{n-1}^{(j)}\right)  }%
,\label{eq:TupdateRML}\\
&  \zeta_{n}^{N}(dx_{n})=\frac{1}{N}\sum_{i=1}^{N}\left(  \overline{T}%
_{n}^{(i)}-\frac{1}{N}\sum_{j=1}^{N}\overline{T}_{n}^{(j)}\right)
\delta_{X_{n}^{\left(  i\right)  }}(dx_{n}), \label{eq:filtGradUpdateRML}%
\end{align}
\textsf{and }
\[
\widehat{\nabla}\log p\left(  \left.  y_{n}\right\vert y_{0:n-1}\right)
=\frac{\int\eta_{n}^{N}(dx_{n})\left.  \nabla g_{\theta}\left(  \left.
y_{n}\right\vert x_{n}\right)  \right\vert _{\theta_{n-1}}+\int\zeta_{n}%
^{N}(dx_{n})g_{\theta_{n-1}}\left(  \left.  y_{n}\right\vert x_{n}\right)
}{\int\eta_{n}^{N}(dx_{n})g_{\theta_{n-1}}\left(  \left.  y_{n}\right\vert
x_{n}\right)  }.
\]
\textsf{Finally update the parameter: }
\begin{equation}
\theta_{n}=\theta_{n-1}+\gamma_{n}\widehat{\nabla}\log p\left(  \left.
y_{n}\right\vert y_{0:n-1}\right)  . \label{eq:SMCrml}%
\end{equation}%
\noindent\hrulefill

Under Assumption A, the particle approximation of the filter is
stable \citep{delmoral2004}; see also Lemma \ref{lem:lpErrorFilter} in the Appendix. This combined with the proven stability of the
particle approximation of the filter derivative implies
that the particle estimate of the derivative of
$\log p\left(  \left. y_{n}\right\vert y_{0:n-1}\right)$ is also stable.

\subsection{Simulations}

The RML algorithm is applied to the following stochastic volatility model
\citep{Pitt99}:
\begin{align*}
X_{0}  &  \sim\mathcal{N}\left(  0,\frac{\sigma^{2}}{1-\phi^{2}}\right)
,\text{ }X_{n+1}=\phi X_{n}+\sigma V_{n+1},\\
Y_{n}  &  =\beta\exp\left(  X_{n}/2\right)  W_{n},
\end{align*}
where $\mathcal{N}\left(  m,s\right)  $\ denotes a Gaussian random variable
with mean $m$ and variance $s$, $V_{n}\overset{\text{i.i.d.}}{\sim}%
\mathcal{N}\left(  0,1\right)  $ and $W_{n}\overset{\text{i.i.d.}}{\sim
}\mathcal{N}\left(  0,1\right)  $ are two mutually independent sequences, both
independent of the initial state $X_{0}$. The model parameters, $\theta
=\left(  \phi,\sigma,\beta\right)  $, are to be estimated.

Our first example demonstrates the theoretical results in Section
\ref{sec:stability}. The estimate of $\partial/\partial\sigma$ $\log p\left(
\left.  y_{n:n+L-1}\right\vert y_{0:n-1}\right)  $ at $\theta^{\ast
}=(0.8,\sqrt{0.1},1)$ \ was computed using Algorithm 1 with 500 particles and
using the path-space method (see (\ref{eq:pathSmcFiltGrad})) with
$2.5\times10^{5}$ particles for the stochastic volatility model. The block
size $L$ was 500. Shown in Figure \ref{fig:blockScroreIncNandN2} is the
variance of these particle estimates for various values of $n$ derived from
many independent random replications of the simulation. The linear increase of
the variance of the path-space method as predicted by theory is evident
although Assumption A is not satisfied.

For the path-space method, because the variance of the estimate of the filter
derivative grows linearly in time, the eventual high variance in the gradient
estimate can result in the divergence of the parameter estimates. To
illustrate this point, (\ref{eq:SMCrml}) was implemented with the path-space
estimate of the filter derivative (\ref{eq:pathSmcFiltGrad}) computed with
10000 particles and constant step-size sequence, $\gamma_{n}=10^{-4}$ for all
$n$. $\theta_{0}$ was initialized at the true parameter value. A\ sequence of
two million observations was simulated with $\theta^{\ast}=(0.8,\sqrt{0.1}%
,1)$. The results are shown in Figure \ref{fig:rml_Nmethod}.

\begin{figure}[ptb]
\centering \includegraphics[scale=0.5]{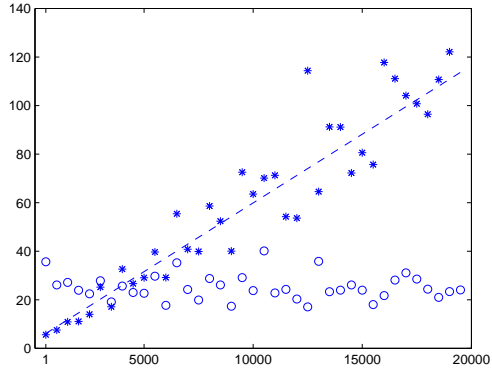}\caption{Variance
of the particle estimates of $\partial/\partial\sigma\log p\left(  \left.
y_{n:n+500-1}\right\vert y_{0:n-1}\right)  $ for various values of $n$ for the
stochastic volatility model. Circles are variance of Algorithm 1's estimate
with 500 particles. Stars indicate the variance of the estimate of the
path-space method with $2.5\times10^{5}$ particles. Dotted line is best
fitting straight line to path-space method's variance to indicate trend.}%
\label{fig:blockScroreIncNandN2}%
\end{figure}

\begin{figure}[ptb]
\centering \includegraphics[scale=0.5]{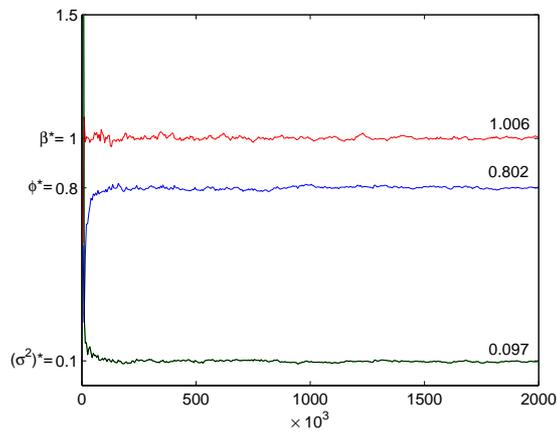}\caption{Sequence
of recursive parameter estimates, $\theta_{n}=\left(  \sigma_{n},\phi
_{n},\beta_{n}\right)  $, computed using (\ref{eq:SMCrml}) with $N=500$. From
top to bottom: $\beta_{n}$, $\phi_{n}$ and $\sigma_{n}$ and marked on the
right are the \textquotedblleft converged values\textquotedblright\ which were
taken to be the empirical average of the last 1000 values. }%
\label{figRecSV}%
\end{figure}

For the same value of $\theta^{\ast}$\ and sequence of observations used in
the previous example, Algorithm 2 was executed with 500 particles and
$\gamma_{n}=0.01$, $n\leq10^{5}$, $\gamma_{n}=(n-5\times10^{4})^{-0.6}$,
$n>10^{5}$. As it can be seen from the results in Figure \ref{figRecSV} the
estimate converges to a value in the neighborhood of the true parameter.

\begin{figure}[ptb]
\centering
\includegraphics[scale=0.5]{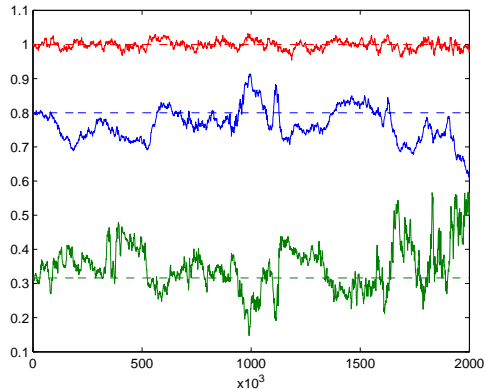}\caption{RML
for stochastic volatility with path-space gradient estimate with 10,000
particles, constant step-size and initialized at the true parameter values
which are indicated by the dashed lines. From top to bottom, $\phi$, $\beta$
and $\sigma$.}%
\label{fig:rml_Nmethod}%
\end{figure}

\section{Conclusion}
We have presented theoretical results establishing the uniform stability of
the particle approximation of the optimal filter derivative proposed in \citet{PDS05, PDS09}. 
While these results have been presented in the context of
state-space models, they can also be applied to Feynman-Kac models \citep{delmoral2004} 
which could potentially enlarge the range of applications. For example,
if $dx^{\prime}f_{\theta}\left(  \left.  x^{\prime}\right\vert x\right)  $ is
reversible w.r.t. to some probability measure $\mu_{\theta}$ and if we replace
$g_{\theta}\left(  \left.  y_{n}\right\vert x_{n}\right)  $ with a
time-homogeneous potential function $g_{\theta}\left(  x_{n}\right)  $ then
$\eta_{\theta,n}$ converges, as $n\rightarrow\infty$, to the probability
measure $\mu_{\theta,h}$ defined as
\[
\mu_{\theta,h}(dx):=\frac{1}{\mu_{\theta}(h_{\theta}\text{ }\int dx^{\prime
}f_{\theta}\left(  \left.  x^{\prime}\right\vert \cdot\right)  h_{\theta
}(x^{\prime}))}\mu_{\theta}(dx)~h_{\theta}(x)~\int dx^{\prime}f_{\theta
}\left(  \left.  x^{\prime}\right\vert x\right)  h_{\theta}(x^{\prime})
\]
where $h_{\theta}$ is a positive eigenmeasure associated with the top
eigenvalue of the integral operator $Q_{\theta}(x,dx^{\prime})=g_{\theta
}(x)dx^{\prime}f_{\theta}\left(  \left.  x^{\prime}\right\vert x\right)  $
(see section 12.4 of~\citet{delmoral2004}). The measure $\mu_{\theta,h}$ is the invariant
measure of the $h$-process defined as the Markov chain with transition kernel
$M_{\theta}\left(  x,dx^{\prime}\right)  \propto dx^{\prime}f_{\theta}\left(
\left.  x^{\prime}\right\vert x\right)  h_{\theta}(x^{\prime})$. The particle
algorithm described here can be directly used to approximate the derivative of
this invariant measure w.r.t to $\theta$. It would also be of interest to
weaken Assumption A and there are several ways this might be approached. For
example for non-ergodic signals using ideas in \citet{OuR05, HeC08}\ or via
Foster-Lyapunov conditions as in \citet{BCJ11, Whi11}.
\section{Acknowledgement}

We are grateful to Sinan Yildirim for carefully reading this report.

\section{Appendix\label{sec:appendix}}

The statement of the results in this section hold for any $\theta$ and any
sequence of observations $y=\{y_{n}\}_{n\geq0}$.
All mathematical expectations are taken with respect to the law of the particle system only for
the specific $\theta$ and $y$ under consideration.
While $\theta$
is retained in the statement of the results, it is omitted in the proofs.
The superscript $y$ of the expectation operator is also omitted in the proofs.

This section commences with some essential definitions in addition to those in Section \ref{subsec:notation}. Let
\[
P_{\theta,k,n}(x_{k},dx_{n})=\frac{Q_{\theta,k,n}(x_{k},dx_{n})}%
{Q_{\theta,k,n}(1)(x_{k})},
\]
and
\[
\mathcal{M}_{\theta,p}(x_{p},dx_{0:p-1})=\prod\limits_{k=p}^{1}M_{\theta
,k}(x_{k},dx_{k-1}),\quad p>0,
\]
and its corresponding particle approximation is
\[
\mathcal{M}_{\theta,p}^{N}(x_{p},dx_{0:p-1})=\prod\limits_{k=p}^{1}%
M_{\theta,k}^{N}(x_{k},dx_{k-1})
\]
To make the subsequent expressions more terse, let
\begin{equation}
\widetilde{\eta}_{\theta,n}^{N}=\Phi_{\theta,n}(\eta_{\theta,n-1}^{N}),\quad
n\geq0, \label{eq:etaTildaDef}
\end{equation}
where $\widetilde{\eta}_{\theta,0}^{N}=\Phi_{\theta,0}(\eta_{-1}^{N}%
)=\eta_{\theta,0}=\pi_{\theta}$ by convention. (Recall $\Phi_{\theta,n}=\Phi_{\theta,n-1,n}$.) Let
\[
\mathcal{F}_{n}^{N}=\sigma\left(  \left\{  X_{k}^{(i)};0\leq k\leq n,1\leq
i\leq N\right\}  \right)  ,\quad n\geq0,
\]
be the natural filtration associated with the $N$-particle approximation model
and let $\mathcal{F}_{-1}^{N}$ be the trivial sigma field.

The following estimates are a straightforward consequence of Assumption (A).
For all $\theta$ and time indices $0\leq k < q\leq n$,
\begin{equation}
b_{\theta,k,n}=\sup_{x_{k},x_{k}^{\prime}}{\frac{Q_{\theta,k,n}(1)(x_{k}%
)}{Q_{\theta,k,n}(1)(x_{k}^{\prime})}}\leq\rho^{2}\delta^{2},\quad\beta\left(
\frac{Q_{\theta,k,q}(x_{k},dx_{q})Q_{\theta,q,n}(1)(x_{q})}{Q_{\theta
,k,q}(Q_{\theta,q,n}(1)) (x_k) }\right)  \leq\left(  1-\rho^{-4}\right)
^{(q-k)}=\overline{\rho}^{q-k}, \label{eq:contractionEst}%
\end{equation}
and for $\theta$, $0<k\leq q$,%
\begin{equation}
M_{\theta,k}^{N}(x,dz)\leq\rho^{4}~M_{\theta,k}^{N}(x^{\prime}%
,dz)\Longrightarrow\beta\left(  M_{\theta,q}^{N}\cdots M_{\theta,k}%
^{N}\right)  \leq\left(  1-\rho^{-4}\right)  ^{q-k+1}.
\label{eq:contractionEst2}%
\end{equation}
Note that setting $q=n$ in (\ref{eq:contractionEst}) yields an estimate for 
$\beta(P_{\theta,k,n})$

Several auxiliary results are now presented, all of which hinge on the
following Kintchine type moment bound proved in \citet[Lem.
7.3.3]{delmoral2004}.

\begin{lem}
\label{lem:khinchine}\citet[Lemma 7.3.3]{delmoral2004}Let $\mu$ be a
probability measure on the measurable space $(E,\mathcal{E})$. Let
$G$\ and $h$ be $\mathcal{E}$-measurable functions satisfying $G(x)\geq
cG(x^{\prime})>0$ for all $x,x^{\prime}\in E$ where $c$ is some finite
positive constant. Let $\{X^{(i)}\}_{1\leq i\leq N}$ be a collection of
independent random samples from $\mu$. If $h$ has finite oscillation then for
any integer $r\geq1$ there exists a finite constant $a_{r}$, independent of
$N$, $G$\ and $h$, such that
\[
\sqrt{N}\mathbb{E}\left\{  \left\vert \frac{\sum_{i=1}^{N}G(X^{(i)}%
)h(X^{(i)})}{\sum_{i=1}^{N}G(X^{(i)})}-\frac{\mu(Gh)}{\mu(G)}\right\vert
^{r}\right\}  ^{\frac{1}{r}}\leq c^{-1}\mbox{\rm osc}(h)a_{r}.
\]

\end{lem}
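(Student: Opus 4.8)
The plan is to reduce the statement to the control of two centered sums of independent variables and then invoke a Marcinkiewicz--Zygmund / Khinchine moment inequality. First I would exploit two scale invariances of the quantity to be bounded. Writing $\mu^{N}=\frac1N\sum_{i}\delta_{X^{(i)}}$, the ratio $\mu^{N}(Gh)/\mu^{N}(G)$ is unchanged when $G$ is multiplied by a positive scalar, so I may normalize $G$ so that $\|G\|=1$. The hypothesis $G(x)\ge cG(x')$ (with $x=x'$ forcing $c\le 1$) then yields $G(x)\in[c,1]$ for every $x$, whence $\mu(G)\ge c$ and, crucially, the deterministic lower bound $\mu^{N}(G)\ge c$. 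Likewise, replacing $h$ by $\bar h:=h-a$ with $a$ the midpoint of the range of $h$ leaves the difference $\mu^{N}(Gh)/\mu^{N}(G)-\mu(Gh)/\mu(G)$ unchanged, while ensuring $\|\bar h\|\le \tfrac12\mathrm{osc}(h)=:\tfrac\omega2$. It therefore suffices to bound the same expression with $h$ replaced by $\bar h$.

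Next I would linearize the ratio by the identity
\[
\frac{\mu^{N}(G\bar h)}{\mu^{N}(G)}-\frac{\mu(G\bar h)}{\mu(G)}
=\frac{1}{\mu^{N}(G)}\left[\left(\mu^{N}(G\bar h)-\mu(G\bar h)\right)-\frac{\mu(G\bar h)}{\mu(G)}\left(\mu^{N}(G)-\mu(G)\right)\right].
\]
Here I would use the deterministic bound $\mu^{N}(G)^{-1}\le c^{-1}$ together with $|\mu(G\bar h)/\mu(G)|\le \|\bar h\|\le \omega/2$. By Minkowski's inequality, the $\mathbb{L}_{r}$ norm of the left-hand side is then at most $c^{-1}\bigl(\|\mu^{N}(G\bar h)-\mu(G\bar h)\|_{r}+\tfrac\omega2\,\|\mu^{N}(G)-\mu(G)\|_{r}\bigr)$, where $\|\cdot\|_r$ denotes the relevant $\mathbb{L}_{r}$ norm.

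Both remaining terms are normalized sums of independent, identically distributed, centered random variables, namely $\mu^{N}(G\bar h)-\mu(G\bar h)$ and $\mu^{N}(G)-\mu(G)$, whose summands are bounded in modulus by $\mathrm{osc}(G\bar h)$ and $\mathrm{osc}(G)$ respectively. The Marcinkiewicz--Zygmund inequality for i.i.d.\ centered variables supplies a constant $b_r$, depending only on $r$, with $\sqrt N\,\|\mu^{N}(u)-\mu(u)\|_r\le b_r\,\mathrm{osc}(u)$ for every bounded $u$. After the normalization $\|G\|=1$ one has $\mathrm{osc}(G)\le 1-c\le 1$ and, using $\mathrm{osc}(G\bar h)\le \|G\|\,\mathrm{osc}(\bar h)+\|\bar h\|\,\mathrm{osc}(G)\le \omega+\tfrac\omega2=\tfrac32\omega$, I would arrive at $\sqrt N$ times the target being bounded by $c^{-1}b_r\bigl(\tfrac32\omega+\tfrac\omega2\bigr)=c^{-1}\,\mathrm{osc}(h)\,(2b_r)$, which is the assertion with $a_r=2b_r$.

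The one genuinely delicate point is bookkeeping the \emph{single} power of $c^{-1}$. A naive argument that also spends a factor $c^{-1}$ to control $\mathrm{osc}(G)/\mu(G)$ or $\mathrm{osc}(G\bar h)/\mu(G)$ would produce a spurious $c^{-2}$. The normalization $\|G\|=1$ is exactly what keeps these oscillations of order $1$ and $\omega$ \emph{independently} of $c$, so that the unique factor $c^{-1}$ enters only through the deterministic lower bound $\mu^{N}(G)\ge c$ on the denominator. Once this is arranged, the proof needs no further concentration machinery beyond the $N$-uniform Marcinkiewicz--Zygmund constant, whose independence of $G$ and $h$ is precisely what makes $a_r$ independent of $N$, $G$ and $h$.
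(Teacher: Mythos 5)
Your proposal is correct and follows essentially the same route as the paper: both linearize the self-normalized ratio through an algebraic identity, exploit the deterministic bound $\mu^{N}(G)\geq c\,\mu(G)$ implied by $G(x)\geq cG(x')$, and conclude with a Khinchine/Marcinkiewicz--Zygmund moment bound for centered empirical averages of bounded i.i.d.\ variables (the paper invokes the $G=1$, $c=1$ case of \citet[Lemma 7.3.3]{delmoral2004} for this last step). Your two centered sums recombine, since constants vanish under $\mu^{N}-\mu$, into the paper's single test function $\frac{G}{\mu(G)}\bigl(h-\frac{\mu(Gh)}{\mu(G)}\bigr)$; your normalization $\|G\|=1$ is merely a more careful piece of bookkeeping that delivers the stated single power of $c^{-1}$, whereas the paper's one-line reduction, applied naively, gives $c^{-2}$ --- a difference that is immaterial in all of the paper's applications.
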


\begin{proof} The result for $G=1$ and $c=1$ is proved in
\citet{delmoral2004}. The case stated here can be
established using the
representation%
\[
\frac{\mu^{N}(Gh)}{\mu^{N}(G)}-\frac{\mu(Gh)}{\mu(G)}=\frac{\mu(G)}{\mu
^{N}(G)}\left(  \mu^{N}-\mu\right)  \left[  \frac{G}{\mu(G)}\left(
h-\frac{\mu(Gh)}{\mu(G)}\right)  \right]
\]
where $\mu^{N}(dx)=N^{-1}\sum_{i=1}^{N}\delta_{X^{(i)}}(dx)$.
\end{proof}

\begin{rem}
\label{rem:khinchine}For $k\geq0$, let $h_{k-1}^{N}$ be a $\mathcal{F}%
_{k-1}^{N}\ $measurable function satisfying\ $h_{k-1}^{N}\in\mbox{Osc}_{1}%
(\mathcal{X})$\ almost surely. Then Lemma \ref{lem:khinchine} can be invoked
to establish%
\[
\sqrt{N}\mathbb{E}_{\theta}^{y}\left\{  \left\vert \frac{\eta_{\theta,k}%
^{N}(Gh_{k-1}^{N})}{\eta_{\theta,k}^{N}(G)}-\frac{\Phi_{\theta,k}(\eta
_{\theta,k-1}^{N})(Gh_{k-1}^{N})}{\Phi_{\theta,k}(\eta_{\theta,k-1}^{N}%
)(G)}\right\vert ^{r}\right\}  ^{\frac{1}{r}}\leq c^{-1}a_{r}%
\]
where $G$ is defined as in Lemma \ref{lem:khinchine}.
\end{rem}

Lemma \ref{lem:lpErrorFilter0} to Lemma \ref{lem:lpErrorFilter2}\ are a
consequence of Lemma \ref{lem:khinchine} and the estimates in
(\ref{eq:contractionEst}).

\begin{lem}
\label{lem:lpErrorFilter0} For any $r\geq1$ there exist a finite constant
$a_{r}$ such that the following inequality holds for all $\theta$, $y$, $0\leq
k\leq n$ and $\mathcal{F}_{k-1}^{N}\ $measurable function $\varphi_{n}^{N}%
$\ satisfying$\ \varphi_{n}^{N}\in\mbox{Osc}_{1}(\mathcal{X})$ \newline almost
surely,%
\[
\sqrt{N}\mathbb{E}_{\theta}^{y}\left(  \left\vert \Phi_{\theta,k,n}%
(\eta_{\theta,k}^{N})(\varphi_{n}^{N})-\Phi_{\theta,k-1,n}(\eta_{\theta
,k-1}^{N})(\varphi_{n}^{N})\right\vert ^{r}~~\right)  ^{\frac{1}{r}}\leq
a_{r}~b_{\theta,k,n}~\beta\left(  P_{\theta,k,n}\right)  ,
\]
where, by convention $\Phi_{\theta,-1,n}(\eta_{\theta,-1}^{N})=\eta_{\theta
,n}$, and the constants $~b_{\theta,k,n}~$and $\beta\left(  P_{\theta
,k,n}\right)  $ were defined in (\ref{eq:contractionEst}).\newline
\end{lem}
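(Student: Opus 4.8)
The plan is to recast the difference so that the two empirical–measure terms differ only through a single sampling step, after which the conditional Khintchine bound of Lemma~\ref{lem:khinchine} applies directly. First I would invoke the semigroup property $\Phi_{\theta,k,n}\circ\Phi_{\theta,k-1,k}=\Phi_{\theta,k-1,n}$, which follows from $Q_{\theta,k-1,n}=Q_{\theta,k-1,k}Q_{\theta,k,n}$, together with the definition $\widetilde{\eta}^{N}_{\theta,k}=\Phi_{\theta,k}(\eta^{N}_{\theta,k-1})=\Phi_{\theta,k-1,k}(\eta^{N}_{\theta,k-1})$, to rewrite $\Phi_{\theta,k-1,n}(\eta^{N}_{\theta,k-1})=\Phi_{\theta,k,n}(\widetilde{\eta}^{N}_{\theta,k})$. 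The quantity to be bounded then becomes $\Phi_{\theta,k,n}(\eta^{N}_{\theta,k})(\varphi^{N}_{n})-\Phi_{\theta,k,n}(\widetilde{\eta}^{N}_{\theta,k})(\varphi^{N}_{n})$, i.e.\ the \emph{same} normalising map applied to the empirical measure $\eta^{N}_{\theta,k}$ and to the predictive measure $\widetilde{\eta}^{N}_{\theta,k}$ from which its particles are drawn. For $k=0$ the convention $\widetilde{\eta}^{N}_{\theta,0}=\pi_{\theta}=\eta_{\theta,0}$ makes this identity coincide with the stated convention $\Phi_{\theta,-1,n}(\eta^{N}_{\theta,-1})=\eta_{\theta,n}$, so that case requires no separate treatment.

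Next I would factorise the normalising map through a Boltzmann--Gibbs transformation. Writing $G=Q_{\theta,k,n}(1)$ and using $Q_{\theta,k,n}(\varphi)=G\cdot P_{\theta,k,n}(\varphi)$, one has for any $\mu\in\mathcal{P}(\mathcal{X})$ that $\Phi_{\theta,k,n}(\mu)(\varphi)=\mu(Q_{\theta,k,n}(\varphi))/\mu(G)=\Psi_{G}(\mu)\bigl(P_{\theta,k,n}(\varphi)\bigr)$. Setting $h=P_{\theta,k,n}(\varphi^{N}_{n})$, the difference is exactly the ratio $\eta^{N}_{\theta,k}(Gh)/\eta^{N}_{\theta,k}(G)-\widetilde{\eta}^{N}_{\theta,k}(Gh)/\widetilde{\eta}^{N}_{\theta,k}(G)$ that appears in Lemma~\ref{lem:khinchine} and Remark~\ref{rem:khinchine}.

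The crux is that, conditionally on $\mathcal{F}^{N}_{k-1}$, the particles $\{X^{(i)}_{k}\}_{1\le i\le N}$ are i.i.d.\ draws from $\widetilde{\eta}^{N}_{\theta,k}$, while $G$, $h$ and $\widetilde{\eta}^{N}_{\theta,k}$ are all $\mathcal{F}^{N}_{k-1}$–measurable. I would therefore apply Lemma~\ref{lem:khinchine} conditionally on $\mathcal{F}^{N}_{k-1}$, with $\mu=\widetilde{\eta}^{N}_{\theta,k}$ and empirical counterpart $\eta^{N}_{\theta,k}$. The admissible constant is $c=b_{\theta,k,n}^{-1}$, since $G(x)/G(x')\le b_{\theta,k,n}$ gives $G(x)\ge b_{\theta,k,n}^{-1}G(x')$; the oscillation factor is furnished by the Dobrushin contraction of the Markov kernel $P_{\theta,k,n}$, namely $\mbox{osc}(h)=\mbox{osc}(P_{\theta,k,n}(\varphi^{N}_{n}))\le\beta(P_{\theta,k,n})\,\mbox{osc}(\varphi^{N}_{n})\le\beta(P_{\theta,k,n})$ almost surely because $\varphi^{N}_{n}\in\mbox{Osc}_1(\mathcal{X})$. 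The conditional bound is thus $a_{r}\,b_{\theta,k,n}\,\beta(P_{\theta,k,n})$, and since this right–hand side is deterministic, taking full expectations via the tower property yields the claimed inequality.

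I expect the only delicate point to be the measurability bookkeeping: confirming that $h=P_{\theta,k,n}(\varphi^{N}_{n})$ and the reweighting function $G=Q_{\theta,k,n}(1)$ enter the conditional form of the Khintchine inequality precisely as in Remark~\ref{rem:khinchine} (applied to the $\mbox{Osc}_1$ function $h/\beta(P_{\theta,k,n})$ and rescaled), and that the estimate for $\beta(P_{\theta,k,n})$ obtained from (\ref{eq:contractionEst}) with $q=n$ is exactly what makes $\mbox{osc}(h)$ small. No further contraction argument is needed, since the single–step structure has already been isolated by the semigroup reduction; everything else is direct substitution.
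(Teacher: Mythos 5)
Your proposal is correct and follows essentially the same route as the paper: the paper likewise rewrites the difference as $\int\bigl(\Psi_{Q_{\theta,k,n}(1)}(\eta^{N}_{\theta,k})-\Psi_{Q_{\theta,k,n}(1)}(\Phi_{\theta,k}(\eta^{N}_{\theta,k-1}))\bigr)(dx_{k})\,P_{\theta,k,n}(\varphi^{N}_{n})(x_{k})$ and applies Lemma~\ref{lem:khinchine} conditionally on $\mathcal{F}^{N}_{k-1}$ with the estimates in (\ref{eq:contractionEst}). Your added details (the semigroup identity, $c=b_{\theta,k,n}^{-1}$, the oscillation bound via the Dobrushin coefficient, and the tower property) are exactly the steps the paper leaves implicit.
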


\begin{proof}
\begin{align*}
&  \Phi_{k,n}(\eta_{k}^{N})(\varphi_{n}^{N})-\Phi_{k-1,n}(\eta_{k-1}%
^{N})(\varphi_{n}^{N})\\
&  =\int\left(  \frac{\eta_{k}^{N}(dx_{k})Q_{k,n}(1)(x_{k})}{\eta_{k}%
^{N}Q_{k,n}(1)}-\frac{\Phi_{k}(\eta_{k-1}^{N})(dx_{k})Q_{k,n}(1)(x_{k})}%
{\Phi_{k}(\eta_{k-1}^{N})Q_{k,n}(1)}\right)  P_{k,n}(\varphi_{n}^{N})(x_{k})
\end{align*}
where $\Phi_{0}(\eta_{-1}^{N})=\eta_{0}$ by convention. Applying Lemma
\ref{lem:khinchine} with the estimates in (\ref{eq:contractionEst}) we have%
\[
\sqrt{N}\mathbb{E}\left(  \left\vert \Phi_{k,n}(\eta_{k}^{N})(\varphi_{n}%
^{N})-\Phi_{k-1,n}(\eta_{k-1}^{N})(\varphi_{n}^{N})\right\vert ^{r}~\left\vert
~\mathcal{F}_{k-1}^{N}\right.  \right)  ^{\frac{1}{r}}\leq a_{r}~b_{k,n}%
~\beta\left(  P_{k,n}\right)
\]
almost surely.
\end{proof}

Lemma \ref{lem:lpErrorFilter0}\ may be used to derive the following error
estimate \citep[Theorem
7.4.4]{delmoral2004}.

\begin{lem}
\label{lem:lpErrorFilter}For any $r\geq1$, there exists a constant $c_{r}$
such that the following inequality holds for all $\theta$, $y$, $n\geq0$ and
$\varphi\in\mbox{Osc}_{1}(\mathcal{X})$,
\begin{equation}
\sqrt{N}\mathbb{E}_{\theta}^{y}\left(  \left\vert [\eta_{\theta,n}^{N}%
-\eta_{\theta,n}](\varphi)\right\vert ^{r}\right)  ^{\frac{1}{r}}\leq
c_{r}~\sum_{k=0}^{n}~b_{\theta,k,n}~\beta\left(  P_{\theta,k,n}\right)  .
\label{eq:lpErrorFilter}%
\end{equation}
Assume (A). For any $r\geq1$, there exists a constant $c_{r}^{\prime}$ such
that for all $\theta$, $y$, $n\geq0$, $\varphi\in\mbox{Osc}_{1}(\mathcal{X})$,
$G\in\mathcal{B}(\mathcal{X})$ such that $G$\ is positive and satisfies
$G(x)\geq c_{G}G(x^{\prime})$\ for all $x,x^{\prime}\in\mathcal{X}$ for some
positive constant $c_{G}$,
\begin{equation}
\sqrt{N}\mathbb{E}_{\theta}^{y}\left(  \left\vert \left[  \frac{\eta
_{\theta,n}^{N}(dx_{n})G(x_{n})}{\eta_{\theta,n}^{N}(G)}-\frac{\eta_{\theta
,n}(dx_{n})G(x_{n})}{\eta_{\theta,n}(G)}\right]  \left(  \varphi\right)
\right\vert ^{r}\right)  ^{\frac{1}{r}}\leq c_{r}^{\prime} (1+ c_G^{-1}).
\label{eq:lpErrorFilter1}%
\end{equation}

\end{lem}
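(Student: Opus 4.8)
The plan is to establish the two inequalities separately: (\ref{eq:lpErrorFilter}) by telescoping the global error into the one-step errors already controlled by Lemma \ref{lem:lpErrorFilter0}, and (\ref{eq:lpErrorFilter1}) by an algebraic rearrangement that reduces it to (\ref{eq:lpErrorFilter}). For (\ref{eq:lpErrorFilter}) I would exploit the semigroup property $\eta_{\theta,n}=\Phi_{\theta,k,n}(\eta_{\theta,k})$. Using $\Phi_{\theta,n,n}=\mathrm{Id}$ together with the stated convention $\Phi_{\theta,-1,n}(\eta_{\theta,-1}^{N})=\eta_{\theta,n}$, the error telescopes as
\[
\eta_{\theta,n}^{N}-\eta_{\theta,n}=\sum_{k=0}^{n}\left(\Phi_{\theta,k,n}(\eta_{\theta,k}^{N})-\Phi_{\theta,k-1,n}(\eta_{\theta,k-1}^{N})\right).
\]
Applying Minkowski's inequality in $\mathbb{L}_{r}(\mathbb{P}_{\theta}^{y})$ and then Lemma \ref{lem:lpErrorFilter0} to each summand (the deterministic $\varphi$ is trivially $\mathcal{F}_{k-1}^{N}$-measurable and in $\mbox{Osc}_{1}(\mathcal{X})$) gives the claim with $c_{r}=a_{r}$. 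Under (A), the estimates (\ref{eq:contractionEst}) reduce the right-hand side to $c_{r}\rho^{2}\delta^{2}\sum_{k=0}^{n}\overline{\rho}^{\,n-k}\leq B_{r}:=c_{r}\rho^{2}\delta^{2}/(1-\overline{\rho})$, a bound independent of $n$.

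For (\ref{eq:lpErrorFilter1}) write $\nu=\eta_{\theta,n}$, $\nu^{N}=\eta_{\theta,n}^{N}$ and recall $\Psi_{G}(\nu)(\varphi)=\nu(G\varphi)/\nu(G)$. The purely algebraic identity used in the proof of Lemma \ref{lem:khinchine} yields
\[
\Psi_{G}(\nu^{N})(\varphi)-\Psi_{G}(\nu)(\varphi)=\frac{1}{\nu^{N}(G)}(\nu^{N}-\nu)\left(G\overline{\varphi}\right),\qquad\overline{\varphi}:=\varphi-\Psi_{G}(\nu)(\varphi),
\]
which remains valid here even though $\nu^{N}$ is a particle filter rather than an empirical measure of i.i.d.\ samples; the point is that $(\nu^{N}-\nu)(G\overline{\varphi})$ is now controlled through (\ref{eq:lpErrorFilter}) instead of through Lemma \ref{lem:khinchine} itself. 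Since $\nu^{N}$ is a probability measure, the prefactor is bounded almost surely by $\nu^{N}(G)^{-1}\leq(\inf G)^{-1}$. Because $\Psi_{G}(\nu)(\varphi)$ is a $G$-weighted average of $\varphi$, it lies in $[\inf\varphi,\sup\varphi]$, so $\Vert\overline{\varphi}\Vert\leq\mbox{osc}(\varphi)\leq1$ and $\mbox{osc}(\overline{\varphi})=\mbox{osc}(\varphi)\leq1$; a product estimate then gives $\mbox{osc}(G\overline{\varphi})\leq\Vert G\Vert\,\mbox{osc}(\overline{\varphi})+\mbox{osc}(G)\Vert\overline{\varphi}\Vert\leq2\Vert G\Vert$, while the hypothesis $G(x)\geq c_{G}G(x^{\prime})$ forces $\Vert G\Vert/\inf G=\sup G/\inf G\leq c_{G}^{-1}$.

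To conclude I would normalise $\psi:=G\overline{\varphi}/\mbox{osc}(G\overline{\varphi})\in\mbox{Osc}_{1}(\mathcal{X})$ (a deterministic function), pull the almost-sure bound $(\inf G)^{-1}$ out of the $\mathbb{L}_{r}$ norm, and apply (\ref{eq:lpErrorFilter}) to $\psi$:
\[
\sqrt{N}\,\mathbb{E}_{\theta}^{y}\left(\left\vert[\Psi_{G}(\nu^{N})-\Psi_{G}(\nu)](\varphi)\right\vert^{r}\right)^{1/r}\leq\frac{\mbox{osc}(G\overline{\varphi})}{\inf G}\,\sqrt{N}\,\mathbb{E}_{\theta}^{y}\left(|(\nu^{N}-\nu)(\psi)|^{r}\right)^{1/r}\leq2c_{G}^{-1}B_{r}.
\]
Since $c_{G}\leq1$, this is at most $c_{r}^{\prime}(1+c_{G}^{-1})$ with $c_{r}^{\prime}=2B_{r}$. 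The one genuinely delicate point is the bookkeeping of the power of $c_{G}^{-1}$: extracting the full ratio $\nu(G)/\nu^{N}(G)$ and estimating the oscillation of $G\overline{\varphi}/\nu(G)$ separately would introduce a spurious $c_{G}^{-2}$. The clean linear dependence on $c_{G}^{-1}$ hinges on extracting only the single factor $\nu^{N}(G)^{-1}$, combined with $\Vert\overline{\varphi}\Vert\leq\mbox{osc}(\varphi)\leq1$, so that $\mbox{osc}(G\overline{\varphi})=O(\Vert G\Vert)$ and exactly one power of $\sup G/\inf G\leq c_{G}^{-1}$ survives after dividing by $\inf G$.
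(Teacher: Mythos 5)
Your proof is correct. The first inequality is handled exactly as in the paper: the telescoping decomposition $\eta_{\theta,n}^{N}-\eta_{\theta,n}=\sum_{k=0}^{n}\bigl(\Phi_{\theta,k,n}(\eta_{\theta,k}^{N})-\Phi_{\theta,k-1,n}(\eta_{\theta,k-1}^{N})\bigr)$ followed by Minkowski's inequality and Lemma \ref{lem:lpErrorFilter0}. For the second inequality, however, you take a genuinely different route. The paper telescopes the $G$-weighted quantity itself, writing the $k$-th increment of $\Phi_{\theta,k,n}(\eta_{\theta,k}^{N})(\varphi G)/\Phi_{\theta,k,n}(\eta_{\theta,k}^{N})(G)$ as an integral of $Q_{\theta,k,n}(G\varphi)/Q_{\theta,k,n}(G)$ against a difference of $Q_{\theta,k,n}(G)$-weighted measures, and re-invokes Lemma \ref{lem:khinchine} with the estimates (\ref{eq:contractionEst}) modified by replacing $1$ with $G$; in that argument the single factor $c_{G}^{-1}$ enters only through the terminal ($k=n$) increment, while the earlier increments contribute a $c_{G}$-free geometric sum, which is what produces the additive form $c_{r}^{\prime}(1+c_{G}^{-1})$. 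You instead reduce the second inequality to the first via the exact centering identity $\Psi_{G}(\nu^{N})(\varphi)-\Psi_{G}(\nu)(\varphi)=\nu^{N}(G)^{-1}(\nu^{N}-\nu)(G\overline{\varphi})$, pull out the almost-sure bound $\nu^{N}(G)^{-1}\leq(\inf G)^{-1}$, and apply (\ref{eq:lpErrorFilter}) (made uniform in $n$ under (A)) to the normalized deterministic test function; your bookkeeping $\mbox{osc}(G\overline{\varphi})\leq 2\Vert G\Vert$ together with $\Vert G\Vert/\inf G\leq c_{G}^{-1}$ is right and yields $2c_{G}^{-1}B_{r}\leq c_{r}^{\prime}(1+c_{G}^{-1})$. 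Your reduction is more economical, since it requires no new contraction or ratio estimates for the $G$-modified semigroup, whereas the paper's version isolates where the $c_{G}^{-1}$ actually originates in the time decomposition; both arguments deliver a bound of the required form.
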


\begin{proof} The first part follows from applying Lemma
\ref{lem:lpErrorFilter0} to the telescopic sum \citep[Theorem 7.4.4]%
{delmoral2004}:%
\[
\left(  \eta_{n}^{N}-\eta_{n}\right)  (\varphi)=\sum\limits_{k=0}^{n}%
\Phi_{k,n}(\eta_{k}^{N})(\varphi)-\Phi_{k-1,n}(\eta_{k-1}^{N})(\varphi)
\]
with the convention that $\Phi_{-1,n}(\eta_{-1}^{N})=\eta_{n}$.
For the second part, use the same telescopic sum but with the
$k$-th term being
\begin{align*}
&  \frac{\Phi_{k,n}(\eta_{k}^{N})(\varphi G)}{\Phi_{k,n}(\eta_{k}^{N}%
)(G)}-\frac{\Phi_{k-1,n}(\eta_{k-1}^{N})(\varphi G)}{\Phi_{k-1,n}(\eta
_{k-1}^{N})(G)}\\
&  =\int\left(  \frac{\eta_{k}^{N}(dx_{k})Q_{k,n}(G)(x_{k})}{\eta_{k}%
^{N}Q_{k,n}(G)}-\frac{\Phi_{k}(\eta_{k-1}^{N})(dx_{k})Q_{k,n}(G)(x_{k})}%
{\Phi_{k}(\eta_{k-1}^{N})Q_{k,n}(G)}\right)  \frac{Q_{k,n}(G\varphi)(x_{k}%
)}{Q_{k,n}(G)(x_{k})}.
\end{align*}
Apply Lemma \ref{lem:khinchine} using the same estimates in
(\ref{eq:contractionEst}), i.e. the same estimates hold with $G$\ replacing
$1$ in the definition of $b_{k,n}$ and with $G$ replacing $Q_{q,n}(1)$\ in the
argument of $\beta$.
\end{proof}

The following result is a consequence of Lemma \ref{lem:lpErrorFilter}.

\begin{lem}
\label{lem:lpErrorFilter1}Assume (A). For any $r\geq1$, there exists a
constant $c_{r}$ such that the following inequality holds for all $\theta$,
$y$, $0\leq k\leq n$, $N>0$ and $\varphi_{n}\in\mbox{Osc}_{1}(\mathcal{X})$,
\[
\sqrt{N}\mathbb{E}_{\theta}^{y}\left(  \left\vert \left[  \Phi_{\theta
,k,n}(\eta_{\theta,k}^{N})-\Phi_{\theta,k,n}(\eta_{\theta,k})\right]  \left(
\varphi_{n}\right)  \right\vert ^{r}\right)  ^{\frac{1}{r}}\leq c_{r}%
\overline{\rho}^{n-k}%
\]

\end{lem}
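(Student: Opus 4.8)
The plan is to observe that the normalised operator $\Phi_{k,n}$ factorises through the Boltzmann--Gibbs transformation $\Psi_G$ associated with the potential $G:=Q_{k,n}(1)$, which reduces the claim to the second estimate of Lemma~\ref{lem:lpErrorFilter}. Setting $\psi:=P_{k,n}(\varphi_n)$ and using the pointwise identity $Q_{k,n}(\varphi_n)=Q_{k,n}(1)\,P_{k,n}(\varphi_n)$, one checks that for every $\nu\in\mathcal{P}(\mathcal{X})$,
\[
\Phi_{k,n}(\nu)(\varphi_n)=\frac{\nu(Q_{k,n}(1)\,\psi)}{\nu(Q_{k,n}(1))}=\Psi_G(\nu)(\psi).
\]
Applying this with $\nu=\eta_k^N$ and $\nu=\eta_k$ shows that the quantity to be bounded is exactly $[\Psi_G(\eta_k^N)-\Psi_G(\eta_k)](\psi)$, i.e. the object controlled by (\ref{eq:lpErrorFilter1}) at time $k$, with deterministic potential $G$ and deterministic test function $\psi$.

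The two quantitative inputs both come from (\ref{eq:contractionEst}). First, $G=Q_{k,n}(1)$ satisfies $G(x)\ge b_{k,n}^{-1}G(x')$ for all $x,x'$ with $b_{k,n}\le\rho^2\delta^2$, so the hypothesis of the second part of Lemma~\ref{lem:lpErrorFilter} holds with $c_G=b_{k,n}^{-1}$ and hence $c_G^{-1}=b_{k,n}\le\rho^2\delta^2$; this is what makes the bound $c_r'(1+c_G^{-1})$ uniform in $k,n$. Second, since $P_{k,n}$ is a Markov kernel and $\varphi_n\in\mbox{Osc}_1(\mathcal{X})$, its image obeys $\mbox{osc}(\psi)\le\beta(P_{k,n})\le\overline{\rho}^{\,n-k}$, the estimate for $\beta(P_{k,n})$ being the $q=n$ case of (\ref{eq:contractionEst}) noted in the text. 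Applying (\ref{eq:lpErrorFilter1}) at time $k$ to the normalised test function $\psi/\mbox{osc}(\psi)\in\mbox{Osc}_1(\mathcal{X})$ and then multiplying through by $\mbox{osc}(\psi)$ gives
\[
\sqrt{N}\,\mathbb{E}\left(\left\vert[\Phi_{k,n}(\eta_k^N)-\Phi_{k,n}(\eta_k)](\varphi_n)\right\vert^r\right)^{1/r}\le c_r'(1+\rho^2\delta^2)\,\overline{\rho}^{\,n-k},
\]
which is the assertion with $c_r:=c_r'(1+\rho^2\delta^2)$. The boundary case $k=n$ is consistent, since then $P_{n,n}=\mathrm{Id}$, $\psi=\varphi_n$, $\mbox{osc}(\psi)\le1=\overline{\rho}^{\,0}$, and the bound collapses to the time-uniform filter estimate (\ref{eq:lpErrorFilter}).

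The conceptual crux, and the step I would take most care over, is the clean separation of the two effects. The normalisation by the potential $G$, which is bounded away from zero but not small, is absorbed into a \emph{uniform constant} precisely because the multiplicative oscillation $b_{k,n}$ of $Q_{k,n}(1)$ is bounded by $\rho^2\delta^2$; meanwhile the entire geometric decay $\overline{\rho}^{\,n-k}$ is produced by the Dobrushin contraction of the prediction semigroup $P_{k,n}$ acting on $\varphi_n$. The hard part is resisting the temptation to bound the difference by a direct Khinchine argument on $\psi$ itself: that would lose the decay, because the estimate must be driven by $\mbox{osc}(\psi)$ rather than $\|\psi\|$. Using (\ref{eq:lpErrorFilter1}) rather than Lemma~\ref{lem:khinchine} directly is what guarantees the centering of $\psi$ against $\Psi_G(\eta_k)$ that is needed to expose the oscillation factor. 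A minor point to verify is that the constant $c_r'$ in (\ref{eq:lpErrorFilter1}) can be chosen independently of the particular $G$ and $\varphi_n$, which holds because the constant $a_r$ in the underlying Lemma~\ref{lem:khinchine} is itself independent of $G$ and $h$.
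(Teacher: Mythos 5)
Your proposal is correct and follows essentially the same route as the paper: the paper likewise writes $\Phi_{k,n}(\eta_k^N)$ as the Boltzmann--Gibbs transform of $\eta_k^N$ with potential $G=Q_{k,n}(1)$ acting on $P_{k,n}(\varphi_n)$, applies the second estimate of Lemma~\ref{lem:lpErrorFilter}, and extracts the geometric factor from $\beta(P_{k,n})$ and the uniform constant from $b_{k,n}\le\rho^2\delta^2$ via (\ref{eq:contractionEst}). Your additional remarks on normalising by $\mbox{osc}(P_{k,n}(\varphi_n))$ and on the independence of the constant from $G$ just make explicit what the paper leaves implicit.
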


\begin{proof} The result is established by expressing $\Phi_{k,n}(\eta_{k}^{N})$\ as
\[
\Phi_{k,n}(\eta_{k}^{N})(dx_{n})=\int\frac{\eta_{k}^{N}(dx_{k})Q_{k,n}%
(1)(x_{k})}{\eta_{k}^{N}Q_{k,n}(1)}P_{k,n}(x_{k},dx_{n}),
\]
expressing $\Phi_{k,n}(\eta_{k})$ similarly, setting $G$\ in
(\ref{eq:lpErrorFilter1}) to $Q_{k,n}(1)$, $\varphi=P_{k,n}(\varphi_{n})$ and
using the estimates in (\ref{eq:contractionEst}).
\end{proof}

\begin{lem}
\label{lem:lpErrorFilter2} For each $r\geq1$, there exists a finite constant
$c_{r}$ such that for all $\theta$, $y$, $0\leq k\leq q\leq n$, and
$\mathcal{F}_{k-1}^{N}\ $measurable functions $\varphi_{q}^{N}$\ satisfying
$\varphi_{q}\in\mbox{Osc}_{1}(\mathcal{X})$ almost surely,\newline\newline%
\begin{align*}
\sqrt{N}\mathbb{E}_{\theta}^{y}  &  \left(  \left\vert \int\left(  \frac
{\Phi_{\theta,k,q}(\eta_{\theta,k}^{N})(dx_{q})Q_{\theta,q,n}(1)(x_{q})}%
{\Phi_{\theta,k,q}(\eta_{\theta,k}^{N})Q_{\theta,q,n}(1)}-\frac{\Phi
_{\theta,k-1,q}(\eta_{\theta,k-1}^{N})(dx_{q})Q_{\theta,q,n}(1)(x_{q})}%
{\Phi_{\theta,k-1,q}(\eta_{\theta,k-1}^{N})Q_{\theta,q,n}(1)}\right)
\varphi_{q}^{N}(x_{q})\right\vert ^{r}~~\right)  ^{\frac{1}{r}}\\
&  \leq c_{r}~b_{\theta,k,n}~\beta\left(  \frac{Q_{\theta,k,q}(x_{k}%
,dx_{q})Q_{\theta,q,n}(1)(x_{q})}{Q_{\theta,k,q}(Q_{\theta,q,n}(1)) (x_k)}\right)
\end{align*}

\end{lem}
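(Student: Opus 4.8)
The plan is to follow the same route as the proof of Lemma \ref{lem:lpErrorFilter0}: algebraically collapse the difference of the two normalized ratios into the canonical form $\eta_k^N(Gh)/\eta_k^N(G)-\widetilde{\eta}_k^N(Gh)/\widetilde{\eta}_k^N(G)$ treated in Remark \ref{rem:khinchine}, and then invoke Lemma \ref{lem:khinchine} conditionally on $\mathcal{F}_{k-1}^{N}$. Here $\widetilde{\eta}_k^N=\Phi_{k}(\eta_{k-1}^N)$ as in \eqref{eq:etaTildaDef}, and the key point is to identify $G$ and $h$ so that the oscillation of $h$ is exactly the Dobrushin coefficient appearing on the right-hand side.

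Concretely, I would introduce the Markov kernel
\[
R_{k,q,n}(x_k,dx_q)=\frac{Q_{k,q}(x_k,dx_q)\,Q_{q,n}(1)(x_q)}{Q_{k,q}(Q_{q,n}(1))(x_k)},
\]
and set $G(x_k)=Q_{k,n}(1)(x_k)=Q_{k,q}(Q_{q,n}(1))(x_k)$ (using the semigroup identity $Q_{k,q}Q_{q,n}=Q_{k,n}$) and $h=R_{k,q,n}(\varphi_q^N)$. Substituting $\Phi_{k,q}(\nu)(dx_q)=\nu Q_{k,q}(dx_q)/\nu Q_{k,q}(1)$ into the first ratio and integrating against $\varphi_q^N$ gives, after cancelling the common normalizer $\eta_k^N Q_{k,q}(1)$, exactly $\eta_k^N(Gh)/\eta_k^N(G)$. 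For the second ratio I would use the semigroup property of the normalized flow, $\Phi_{k-1,q}=\Phi_{k,q}\circ\Phi_{k-1,k}=\Phi_{k,q}\circ\Phi_k$, so that $\Phi_{k-1,q}(\eta_{k-1}^N)=\Phi_{k,q}(\widetilde{\eta}_k^N)$; the identical computation then produces $\widetilde{\eta}_k^N(Gh)/\widetilde{\eta}_k^N(G)$. Hence the quantity inside the absolute value in the statement equals $\eta_k^N(Gh)/\eta_k^N(G)-\widetilde{\eta}_k^N(Gh)/\widetilde{\eta}_k^N(G)$. (The boundary case $k=0$ is covered by the convention $\Phi_{-1,q}(\eta_{-1}^N)=\eta_q$ used in Lemma \ref{lem:lpErrorFilter0}.)

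It then remains to check the two hypotheses of Lemma \ref{lem:khinchine}. The estimate $b_{k,n}=\sup_{x,x'}Q_{k,n}(1)(x)/Q_{k,n}(1)(x')\leq\rho^2\delta^2$ from \eqref{eq:contractionEst} gives $G(x)\geq b_{k,n}^{-1}G(x')$, so one may take $c=b_{k,n}^{-1}$, i.e.\ $c^{-1}=b_{k,n}$. Since $R_{k,q,n}$ is a Markov kernel and $\varphi_q^N\in\mbox{Osc}_1(\mathcal{X})$ almost surely, the definition of the Dobrushin coefficient yields $\mbox{osc}(h)=\mbox{osc}(R_{k,q,n}(\varphi_q^N))\leq\beta(R_{k,q,n})$ almost surely. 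Conditionally on $\mathcal{F}_{k-1}^{N}$ the particles $\{X_k^{(i)}\}_{1\leq i\leq N}$ are i.i.d.\ draws from $\widetilde{\eta}_k^N$, while $G$ is deterministic and $h$ is $\mathcal{F}_{k-1}^{N}$-measurable; thus both $G$ and $h$ are fixed under the conditioning. Applying Lemma \ref{lem:khinchine} (in the form of Remark \ref{rem:khinchine}, but keeping the factor $\mbox{osc}(h)$) gives the almost-sure conditional bound $a_r\,b_{k,n}\,\beta(R_{k,q,n})$, and taking full expectation via the tower property — the right-hand side being deterministic — yields the claim with $c_r=a_r$.

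The only delicate part is the bookkeeping in the second paragraph: one must verify, through the two semigroup identities, that both ratios reduce to the same pair $(G,h)$, and recognize that the resulting function $h=R_{k,q,n}(\varphi_q^N)$ has oscillation bounded precisely by $\beta(R_{k,q,n})$, which is the coefficient displayed in the statement. Once this reduction is in place, the probabilistic content is entirely supplied by the conditional Khinchine inequality already isolated in Lemma \ref{lem:khinchine}.
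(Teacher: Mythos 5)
Your proposal is correct and follows essentially the same route as the paper: the paper likewise rewrites the difference as the integral of the Markov kernel $Q_{\theta,k,q}(x_{k},dx_{q})Q_{\theta,q,n}(1)(x_{q})/Q_{\theta,k,n}(1)(x_{k})$ against the difference of the two $Q_{\theta,k,n}(1)$-weighted measures built from $\eta_{\theta,k}^{N}$ and $\Phi_{\theta,k}(\eta_{\theta,k-1}^{N})$, and then invokes Lemma \ref{lem:khinchine} with the estimates in (\ref{eq:contractionEst}). Your identification of $G$, $h$, and the oscillation bound via the Dobrushin coefficient is exactly the intended argument.
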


\begin{proof} This results is established by noting that%
\begin{align*}
&  \frac{\Phi_{k,q}(\eta_{k}^{N})(dx_{q})Q_{q,n}(1)(x_{q})}{\Phi_{k,q}%
(\eta_{k}^{N})Q_{q,n}(1)}-\frac{\Phi_{k-1,q}(\eta_{k-1}^{N})(dx_{q}%
)Q_{q,n}(1)(x_{q})}{\Phi_{k-1,q}(\eta_{k-1}^{N})Q_{q,n}(1)}\\
&  =\int\left(  \frac{\eta_{k}^{N}(dx_{k})Q_{k,n}(1)(x_{k})}{\eta_{k}%
^{N}Q_{k,n}(1)}-\frac{\Phi_{k}(\eta_{k-1}^{N})(dx_{k})Q_{k,n}(1)(x_{k})}%
{\Phi_{k}(\eta_{k-1}^{N})Q_{k,n}(1)}\right)  \frac{Q_{k,q}(x_{k}%
,dx_{q})Q_{q,n}(1)(x_{q})}{Q_{k,n}(1)(x_{k})}.
\end{align*}
Now Lemma \ref{lem:khinchine} is applied using the estimates in
(\ref{eq:contractionEst}).
\end{proof}

\begin{lem}
\label{lem:lpErrorDpn} Assume (A). There exists a collection of a pair of
finite positive constants, $a_{i},c_{i}$, $i\geq1$, such that the following
bounds hold for all $r\geq1$, $\theta$, $y$, $0\leq p\leq n$, $N\geq1$,
$x_{p}\in\mathcal{X}$, $F_{p}\in\mathcal{B(X}^{p+1}\mathcal{)}$, $F_{n}%
\in\mathcal{B(X}^{n+1}\mathcal{)}$,\newline%
\begin{align*}
\sqrt{N}\mathbb{E}_{\theta}^{y}\left(  \left\vert \mathcal{M}_{\theta,p}%
^{N}\left(  F_{p}(.,x_{p})\right)  (x_{p})-\mathcal{M}_{\theta,p}\left(
F_{p}(.,x_{p})\right)  (x_{p})\right\vert ^{r}~~\right)  ^{\frac{1}{r}}  &
\leq\left\Vert F_{p}\right\Vert a_{r}p,\\
\sqrt{N}\mathbb{E}_{\theta}^{y}\left(  \left\vert D_{\theta,p,n}^{N}(F_{n})(x_{p}%
)-D_{\theta,p,n}(F_{n})(x_{p})\right\vert ^{r}~~\right)  ^{\frac{1}{r}}  &
\leq a_{r}c_{n}\left\Vert F_{n}\right\Vert .
\end{align*}

\end{lem}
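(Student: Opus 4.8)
My plan is to deduce the second inequality from the first and to concentrate the work on the backward-smoother bound. Since the forward block $\prod_{j=p}^{n-1}Q_{\theta,j+1}$ is non-random, the particle operator $D_{p,n}^{N}$ and its limit $D_{p,n}$ differ only through the backward kernels $\mathcal{M}_{p}^{N}$ and $\mathcal{M}_{p}$. Setting
\[
\overline{F}_{n}(x_{0:p})=\int\Big(\prod\nolimits_{j=p}^{n-1}Q_{j+1}(x_{j},dx_{j+1})\Big)F_{n}(x_{0:n}),
\]
which integrates only $x_{p+1:n}$ and leaves $x_{0:p-1}$ untouched, one has $D_{p,n}^{N}(F_{n})(x_{p})=\mathcal{M}_{p}^{N}(\overline F_{n}(\cdot,x_{p}))(x_{p})$ and likewise without the superscript $N$, so that
\[
D_{p,n}^{N}(F_{n})(x_{p})-D_{p,n}(F_{n})(x_{p})=\big(\mathcal{M}_{p}^{N}-\mathcal{M}_{p}\big)\big(\overline F_{n}(\cdot,x_{p})\big)(x_{p}).
\]
Applying the first bound with $F_{p}:=\overline F_{n}$ and using $\|\overline F_{n}\|\le\|F_{n}\|\,\|Q_{p,n}(1)\|\le\delta^{n-p}\|F_{n}\|$ (each $Q_{j+1}(1)=g(y_{j}|\cdot)\le\delta$ by (A)) gives the second bound with, e.g., $c_{n}=\max_{0\le p\le n}p\,\delta^{n-p}$.

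For the first bound I would expand $\mathcal{M}_{p}^{N}-\mathcal{M}_{p}$ by swapping one backward kernel at a time. With $\Phi:=F_{p}(\cdot,x_{p})$ and $S_{m}:=(\prod_{j=p}^{m+1}M_{j})(\prod_{j=m}^{1}M_{j}^{N})$ (true kernels above index $m$, particle kernels at and below), one has $S_{p}=\mathcal{M}_{p}^{N}$, $S_{0}=\mathcal{M}_{p}$, whence
\[
\big(\mathcal{M}_{p}^{N}-\mathcal{M}_{p}\big)(\Phi)=\sum_{k=1}^{p}\Big(\prod\nolimits_{j=p}^{k+1}M_{j}\Big)\big(M_{k}^{N}-M_{k}\big)\Big(\prod\nolimits_{j=k-1}^{1}M_{j}^{N}\Big)(\Phi).
\]
Placing the \emph{true} kernels above $k$ and the \emph{particle} kernels below is deliberate: the lower block $\psi_{k}^{N}:=(\prod_{j=k-1}^{1}M_{j}^{N})\Phi$ is $\mathcal{F}_{k-2}^{N}$-measurable while the upper block is deterministic. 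Using the normalisation identity from the proof of Lemma \ref{lem:khinchine} for the backward local error $M_{k}^{N}(\cdot)(x_{k})-M_{k}(\cdot)(x_{k})=\Psi_{q(x_{k}|\cdot)}(\eta_{k-1}^{N})-\Psi_{q(x_{k}|\cdot)}(\eta_{k-1})$ and re-associating the integrals, each summand reduces to a single filter fluctuation $(\eta_{k-1}^{N}-\eta_{k-1})(\Xi_{k})$, where $\Xi_{k}$ is obtained by integrating $q(x_{k}|\cdot)(\psi_{k}^{N}-\Psi_{q(x_{k}|\cdot)}(\eta_{k-1})(\psi_{k}^{N}))/\eta_{k-1}^{N}(q(x_{k}|\cdot))$ against the upper (deterministic, Markov) kernels. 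Because those kernels are non-expansive and $q=gf$ has ratio bounded by $(\rho\delta)^{2}$ under (A), $\mathrm{osc}(\Xi_{k})\le C\|F_{p}\|$ with $C$ \emph{independent of} $p-k$; this is exactly what turns the $p$ summands into the linear factor $p$ rather than a geometrically growing one, so a Minkowski inequality over $k$ finishes the proof once each summand is shown to be $O(\|F_{p}\|/\sqrt N)$.

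It then remains to bound $\mathbb{E}(|(\eta_{k-1}^{N}-\eta_{k-1})(\Xi_{k})|^{r})^{1/r}$ by $C\|F_{p}\|/\sqrt N$, the expectation being w.r.t.\ the particle law. First I would replace the particle-dependent prefactor $1/\eta_{k-1}^{N}(q(x_{k}|\cdot))$ by its deterministic counterpart $1/\eta_{k-1}(q(x_{k}|\cdot))$; the resulting discrepancy is itself an $O(1/\sqrt N)$ filter fluctuation and contributes only a higher-order $O(1/N)$ term, leaving an integrand that is $\mathcal{F}_{k-2}^{N}$-measurable. The leading piece is then handled by expanding $\eta_{k-1}^{N}-\eta_{k-1}$ into its local sampling errors and invoking the conditional Khintchine inequality of Lemma \ref{lem:khinchine} and Remark \ref{rem:khinchine} together with the contraction estimates (\ref{eq:contractionEst}); the finiteness of the dominating measures in (A) lets one dominate the random predictive measures $\widetilde\eta_{m}^{N}$ by deterministic finite measures, so that pointwise-in-$x_{k}$ $\mathbb{L}_{r}$ estimates pass through the remaining integrals by Fubini.

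The main obstacle, and the point demanding the most care, is precisely that $\Xi_{k}$ is a functional of the very particle system whose fluctuation $\eta_{k-1}^{N}-\eta_{k-1}$ it is integrated against: it is not predictable for the earlier sampling steps, so the one-step-at-a-time conditioning that proves the plain filter bounds (Lemmas \ref{lem:lpErrorFilter0}--\ref{lem:lpErrorFilter}) cannot be applied verbatim to a \emph{fixed} $\Xi_{k}$. The way through is to expand the whole quantity into the mutually orthogonal local sampling errors $V_{m}^{N}$ ($m\le k-1$) and to peel the particle dependence of $\Xi_{k}$ into a leading deterministic part plus strictly higher-order remainders, all the while keeping every constant uniform in $p-k$ --- uniformity that is guaranteed by the non-expansiveness of the backward Markov kernels and the ratio bounds of (A) recorded in (\ref{eq:contractionEst}).
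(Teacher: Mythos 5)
Your reduction of the second bound to the first is exactly the paper's: write $D_{\theta,p,n}^{N}-D_{\theta,p,n}$ as $(\mathcal{M}_{\theta,p}^{N}-\mathcal{M}_{\theta,p})$ acting on $F_{p,n}(x_{0:p})=\int\prod_{j}Q_{\theta,j+1}\,F_{n}$ and absorb $\Vert Q_{\theta,p,n}(1)\Vert$ into $c_{n}$. The problem is in the first bound. You telescope over the backward kernels, swapping $M_{\theta,k}^{N}$ for $M_{\theta,k}$ one index at a time with true kernels above and particle kernels below. Each resulting summand then contains the \emph{accumulated} filter fluctuation $\eta_{\theta,k-1}^{N}-\eta_{\theta,k-1}$ tested against the random function built from $\psi_{k}^{N}=(\prod_{j=k-1}^{1}M_{\theta,j}^{N})\Phi$, which is $\mathcal{F}_{k-2}^{N}$-measurable and hence \emph{not} predictable with respect to the sampling steps $m<k-1$ whose errors make up that fluctuation. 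You correctly name this as the main obstacle, but the resolution you offer --- expand into the orthogonal local errors $V_{m}^{N}$ and ``peel the particle dependence of $\Xi_{k}$ into a leading deterministic part plus strictly higher-order remainders'' --- is precisely the hard step, and it is not carried out. Note that replacing $\psi_{k}^{N}$ by its deterministic counterpart $(\prod_{j=k-1}^{1}M_{\theta,j})\Phi$ is an instance of the very inequality being proved (at rank $k-1$), so your route implicitly requires an induction on $p$ together with control of the cross terms; none of this is set up, and it is not obvious the constants stay linear in $p$ rather than quadratic once the remainders are tracked.

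The paper avoids the obstacle entirely by telescoping over the \emph{sampling} steps rather than over the kernels: the $k$-th summand replaces $\eta_{\theta,p-k}^{N}$ by $\widetilde{\eta}_{\theta,p-k}^{N}=\Phi_{\theta,p-k}(\eta_{\theta,p-k-1}^{N})$ inside the ratio
\[
\frac{\eta_{\theta,p-k}^{N}D_{\theta,p-k,p-1}^{N}(dx_{0:p-1})\,q_{\theta}(x_{p}|x_{p-1})}{\eta_{\theta,p-k}^{N}D_{\theta,p-k,p-1}^{N}(q_{\theta}(x_{p}|\cdot))},
\]
so each term isolates exactly one conditional sampling error, tested against $D_{\theta,p-k,p-1}^{N}(G_{p-1,x_{p}})/Q_{\theta,p-k,p-1}(q_{\theta}(x_{p}|\cdot))$, which is $\mathcal{F}_{p-k-1}^{N}$-measurable with norm at most $\Vert F_{p}\Vert$. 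Remark \ref{rem:khinchine} then applies term by term, each term is $O(\Vert F_{p}\Vert/\sqrt{N})$ uniformly, and the factor $p$ falls out of Minkowski over the $p$ summands. To repair your argument you should either reorganise your decomposition into this form (the two telescopings differ precisely in whether the ``frozen'' block sits above or below the swapped index, and only the paper's ordering makes the test function measurable with respect to the conditioning $\sigma$-field), or fully execute the orthogonal-expansion-plus-induction programme you sketch; as written, the proof is incomplete at its critical step.
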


\begin{proof} For each $x_{p}$, let $x_{0:p-1}\rightarrow G_{p-1,x_{p}}(x_{0:p-1}%
)=F_{p}(x_{0:p})q(x_{p}|x_{p-1})$. Adopting the convention $\widetilde{\eta
}_{0}^{N}=\eta_{0}$,%
\begin{align*}
&  \mathcal{M}_{p}^{N}\left(  F_{p}(.,x_{p})\right)  (x_{p})-\mathcal{M}%
_{p}\left(  F_{p}(.,x_{p})\right)  (x_{p})\\
&  =\sum_{k=1}^{p}\int\left(  \frac{\eta_{p-k}^{N}D_{p-k,p-1}^{N}%
(dx_{0:p-1})q(x_{p}|x_{p-1})}{\eta_{p-k}^{N}D_{p-k,p-1}^{N}(q(x_{p}|.))}%
-\frac{\widetilde{\eta}_{p-k}^{N}D_{p-k,p-1}^{N}(dx_{0:p-1})q(x_{p}|x_{p-1}%
)}{\widetilde{\eta}_{p-k}^{N}D_{p-k,p-1}^{N}(q(x_{p}|.))}\right)
F_{p}(x_{0:p})\\
&  =\sum_{k=1}^{p}\int\left(
\frac{\eta_{p-k}^{N}(dx_{p-k}) Q_{p-k,p-1}%
(q(x_{p}|.))(x_{p-k})}{\eta_{p-k}^{N}Q_{p-k,p-1}(q(x_{p}|.))}%
- \frac{\widetilde{\eta}_{p-k}^{N}(dx_{p-k}) Q_{p-k,p-1}%
(q(x_{p}|.))(x_{p-k})}{\widetilde{\eta}_{p-k}^{N}Q_{p-k,p-1}(q(x_{p}|.))}
\right)   \\
& \qquad \qquad \times \frac{G_{p-k,p-1,x_{p}}^{N}(x_{p-k})}{Q_{p-k,p-1}%
(q(x_{p}|.))(x_{p-k})}%
\end{align*}
where $G_{p-k,p-1,x_{p}}^{N}(x_{p-k})=D_{p-k,p-1}^{N}(G_{p-1,x_{p}})(x_{p-k}%
)$, which is a $\mathcal{F}_{p-k-1}^{N}$-measurable function with norm%
\[
\sup_{x_{p-k}}\left\vert \frac{G_{p-k,p-1,x_{p}}^{N}(x_{p-k})}{Q_{p-k,p-1}%
(q(x_{p}|.))(x_{p-k})}\right\vert \leq\left\Vert F_{p}\right\Vert .
\]
The result is established upon applying Lemma \ref{lem:khinchine} (see Remark
\ref{rem:khinchine}) to each term in the sum separately and using the
estimates in (\ref{eq:contractionEst}).
To establish the second result, let
\[F_{p,n}(x_{0:p})=\int Q_{p+1}%
(x_{p},dx_{p+1})\cdots Q_{n}(x_{n-1},dx_{n})F_{n}(x_{0:n}).\] Then,
\[
D_{p,n}^{N}(F_{n})(x_{p})-D_{p,n}(F_{n})(x_{p})=\mathcal{M}_{p}^{N}\left(
F_{p,n}(.,x_{p})\right)  (x_{p})-\mathcal{M}_{p}\left(  F_{p,n}(.,x_{p}%
)\right)  (x_{p}).
\]
The result follows by setting $c_{n}=p\sup_{\theta}\left\Vert Q_{\theta,p,n}%
(1)\right\Vert $ and it follows from Assumption (A) that $c_{n}$ is finite.
\end{proof}

Lemma \ref{theoremLperrorProofPart1} and Lemma \ref{theoremLperrorProofPart2}
both build on the previous results and are needed for the proof of Theorem
\ref{theo:Lperror}.

\begin{lem}
\label{theoremLperrorProofPart1} Assume (A). For any $r\geq1$ there exists a
constant $C_{r}$ such that for all $\theta$, $y$, $0\leq k<n$, $N\geq1$,
$\varphi_{n}\in\mbox{Osc}_{1}(\mathcal{X})$,
\begin{align}
\sqrt{N}\mathbb{E}  &  _{\theta}^{y}\left\{  \left\vert \int \mathbb{Q}_{\theta,n}^{N}(dx_{0:n}) t_{\theta
,k}\left(  x_{k-1},x_{k}\right)  \left(  \varphi_{n}(x_{n})-\eta_{\theta
,n}^{N}(\varphi_{n})\right)  \right.
\right. \nonumber\\
&  \quad\left.  \left.  -\int
\frac{\eta_{\theta,k}^{N}D_{\theta,k,n}^{N}(dx_{0:n})}{\eta_{\theta,k}^{N}%
D_{\theta,k,n}^{N}(1)}
t_{\theta,k}\left(  x_{k-1},x_{k}\right)
\left(  \varphi_{n}(x_{n})-\frac{\eta_{\theta,k}^{N}D_{\theta,k,n}^{N}%
(\varphi_{n})}{\eta_{\theta,k}^{N}D_{\theta,k,n}^{N}(1)}\right)  \right\vert ^{r}\right\}  ^{\frac{1}{r}}\nonumber\\
&  \leq2(n-k)C_{r}\overline{\rho}^{n-k}
\label{eq:filtDervLpError_kthErrorTerm_term1_bound}%
\end{align}
\end{lem}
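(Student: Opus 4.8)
The plan is to reduce both integrals to a covariance between a time-$k$ observable and $\varphi_n$, to split off the centering, and then to propagate the intervening local sampling errors through the contraction estimates in (\ref{eq:contractionEst})--(\ref{eq:contractionEst2}).

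First I would integrate out the coordinates $x_{0:k-1}$, which are governed by the \emph{same} backward kernels $\prod_{j=k}^{1}M_{\theta,j}^{N}$ in both measures. Since $t_{\theta,k}$ depends only on $(x_{k-1},x_k)$, in each term it collapses to the single $\mathcal{F}_{k-1}^{N}$-measurable function $\tau_{k}^{N}(x_k):=\int M_{\theta,k}^{N}(x_k,dx_{k-1})\,t_{\theta,k}(x_{k-1},x_k)$, which is bounded ($\|\tau_k^N\|\le 2c$ by (A)), so $\tau_k^N/(4c)\in\mbox{Osc}_1(\mathcal{X})$. Using that $\eta_{\theta,n}^{N}$ is the $x_n$-marginal of $\mathbb{Q}_{\theta,n}^{N}$ and that $\eta_{\theta,k}^{N}D_{\theta,k,n}^{N}(\varphi_n)/\eta_{\theta,k}^{N}D_{\theta,k,n}^{N}(1)=\Phi_{\theta,k,n}(\eta_{\theta,k}^{N})(\varphi_n)$, both terms become covariances of $\tau_k^N$ against $\varphi_n$: the first under the $(x_k,x_n)$-marginal of $\mathbb{Q}_{\theta,n}^{N}$, the second under the measure proportional to $\eta_{\theta,k}^{N}(dx_k)\,Q_{\theta,k,n}(x_k,dx_n)$. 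Writing each covariance as $[\mathrm{joint}](\tau_k^N\,\varphi_n)-[\mathrm{joint}](\tau_k^N)\,[\mathrm{marginal}](\varphi_n)$ splits the quantity to be bounded into a measure-difference piece and a centering-difference piece; this accounts for the factor $2$ in the bound.

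Next I would expand each piece as a telescoping sum over the intermediate times $j\in\{k+1,\dots,n\}$, replacing $\eta_{\theta,j}^{N}$ by $\widetilde{\eta}_{\theta,j}^{N}=\Phi_{\theta,j}(\eta_{\theta,j-1}^{N})$ at step $j$, so that the $j$-th increment is exactly a local sampling error $\eta_{\theta,j}^{N}-\widetilde{\eta}_{\theta,j}^{N}$ inserted between contracting kernels --- the same mechanism as in the proof of Lemma~\ref{lem:lpErrorDpn}. Each increment is then controlled by Lemma~\ref{lem:lpErrorFilter2} (equivalently, the Khinchine bound of Lemma~\ref{lem:khinchine} applied conditionally as in Remark~\ref{rem:khinchine}): this supplies the $N^{-1/2}$ scale and an $r$-dependent constant, while the Dobrushin estimates $b_{\theta,j,n}\le\rho^2\delta^2$ and $\beta(P_{\theta,j,n})\le\overline{\rho}^{\,n-j}$ supply the decay. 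The key point is that the perturbation at time $j$ sits between a forward block of kernels running up to time $n$, whose contraction is \emph{activated by the centering of} $\varphi_n$ and gives a factor $\overline{\rho}^{\,n-j}$, and a backward block running down to time $k$, contracted through the products $M_{\theta,j}^{N}\cdots M_{\theta,k+1}^{N}$ of (\ref{eq:contractionEst2}) and giving $\overline{\rho}^{\,j-k}$; since $\tau_k^N$ is bounded and localized at time $k$, the two factors multiply to $\overline{\rho}^{\,n-k}$ for \emph{every} split point $j$. Summing the $(n-k)$ increments in each of the two pieces yields $2(n-k)C_r\overline{\rho}^{\,n-k}$.

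The main obstacle is verifying that each telescoping increment genuinely carries the \emph{full} product $\overline{\rho}^{\,n-k}$ uniformly in $j$, rather than merely the one-sided factor $\overline{\rho}^{\,n-j}$ produced directly by Lemma~\ref{lem:lpErrorFilter2}; this is essential, since the claimed bound decays in $n-k$ whereas a one-sided estimate would only give a sum of order $O(1)$. Making this precise requires using the reversal property of the backward kernels $M_{\theta,j}^{N}$ with respect to the particle filtering flow to exhibit the inserted local error as genuinely flanked by a backward block acting on $\tau_k^N$ and a forward block acting on the centred $\varphi_n$, and then matching the resulting objects to the exact hypotheses of Lemma~\ref{lem:lpErrorFilter2}, whose right-hand side already has the product form $c_r\,b_{\theta,j,n}\,\beta(\cdots)$. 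A secondary bookkeeping point is that $\tau_k^N$ and the centering constants are themselves random and measurable with respect to the earlier filtrations, so every moment bound must be applied conditionally before taking the outer expectation.
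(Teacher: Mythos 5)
You have the right machinery in view --- the telescoping over intermediate times, the identity $\eta_{\theta,p}^{N}D_{\theta,p,n}^{N}/\eta_{\theta,p}^{N}(g(y_{p}|\cdot))=\widetilde{\eta}_{\theta,p+1}^{N}D_{\theta,p+1,n}^{N}$ exhibiting each increment as a local sampling error flanked by a backward block contracting via (\ref{eq:contractionEst2}) and a forward block contracting via (\ref{eq:contractionEst}), and the conditional Khinchine bounds of Remark \ref{rem:khinchine} and Lemma \ref{lem:lpErrorFilter2}; the preliminary reduction of $t_{\theta,k}$ to the $\mathcal{F}_{k-1}^{N}$-measurable function $\tau_{k}^{N}(x_{k})=\int M_{\theta,k}^{N}(x_{k},dx_{k-1})t_{\theta,k}(x_{k-1},x_{k})$ is also legitimate, since both measures share the backward kernels below time $k$. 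But the order of operations you propose fails at exactly the obstacle you flag. If you first split each covariance into a ``measure-difference piece'' $[\mu^{(1)}-\mu^{(2)}](\tau_{k}^{N}\varphi_{n})$ and a ``centering-difference piece'' $\mu^{(1)}(\tau_{k}^{N})\mu^{(1)}(\varphi_{n})-\mu^{(2)}(\tau_{k}^{N})\mu^{(2)}(\varphi_{n})$ (where $\mu^{(1)},\mu^{(2)}$ are the two joint laws of $(x_{k},x_{n})$) and then telescope each separately, neither piece carries the factor $\overline{\rho}^{\,n-k}$. Take $\tau_{k}^{N}\equiv 1$: the first piece reduces to $[\eta_{\theta,n}^{N}-\Phi_{\theta,k,n}(\eta_{\theta,k}^{N})](\varphi_{n})$, whose $j$-th increment decays only as $\overline{\rho}^{\,n-j}$ and whose sum is $O(N^{-1/2})$ with no decay in $n-k$ (the $j=n$ increment is a raw local error); the second piece is identical, and only their difference vanishes. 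The product contraction $\overline{\rho}^{\,j-k}\cdot\overline{\rho}^{\,n-j}$ is available only when the integrand tested against the $j$-th local error is \emph{doubly} centred, i.e.\ of the form $(t_{\theta,k}-c_{j})(\varphi_{n}-d_{j})$ with both constants taken at the same running index; splitting off the centering before telescoping destroys precisely this cancellation, so bounding your two pieces separately by $(n-k)C_{r}\overline{\rho}^{\,n-k}$ is not possible.

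The repair, which is what the paper does, is to telescope the full covariance expression first and recentre both factors inside each increment. The $p$-th increment then splits into (i) the difference of the two consecutive normalized measures applied to $(t_{\theta,k}-c_{p})(\varphi_{n}-d_{p})$, which via (\ref{eq:lawDecomp}) is a local error tested against $\mathcal{M}_{\theta,p+1}^{N}(t_{\theta,k}-c_{p})\cdot P_{\theta,p+1,n}(\varphi_{n}-d_{p})$ and hence carries $\overline{\rho}^{\,p+1-k}\cdot\overline{\rho}^{\,n-p-1}=\overline{\rho}^{\,n-k}$, and (ii) a product of two one-step differences, one bounded almost surely by $C\overline{\rho}^{\,p-k+1}$ through the backward contraction and the other bounded in $\mathbb{L}_{r}$ by $C_{r}N^{-1/2}\overline{\rho}^{\,n-p-1}$ through Lemma \ref{lem:lpErrorFilter0}. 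It is these two subterms --- not your two pieces --- that each sum to $(n-k)C_{r}\overline{\rho}^{\,n-k}$ and account for the factor $2$ in the statement.
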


\begin{proof} The term
(\ref{eq:filtDervLpError_kthErrorTerm_term1_bound}) can be further expanded as
\begin{align}
& \int \frac{\eta_{k}^{N}D_{k,n}^{N}(dx_{0:n})}{\eta_{k}^{N}D_{k,n}%
^{N}(1)} t_{k}\left(  x_{k-1},x_{k}\right)  \left(  \varphi_{n}(x_{n}%
)-\frac{\eta_{k}^{N}D_{k,n}^{N}(\varphi_{n})}{\eta_{k}^{N}D_{k,n}^{N}%
(1)}\right)
\nonumber\\
&\quad-\int \mathbb{Q}_{n}^{N}(dx_{0:n}) t_{k}\left(  x_{k-1},x_{k}\right)  \left(  \varphi_{n}(x_{n})-\eta
_{n}^{N}(\varphi_{n})\right)
  \nonumber\\
& = \sum_{p=k}^{n-1} \int \frac{\eta_{p}^{N}D_{p,n}^{N}(dx_{0:n})}{\eta_{p}%
^{N}D_{p,n}^{N}(1)}
 t_{k}\left(  x_{k-1},x_{k}\right)  \left(  \varphi
_{n}(x_{n})-\frac{\eta_{p}^{N}D_{p,n}^{N}(\varphi_{n})}{\eta_{p}^{N}%
D_{p,n}^{N}(1)}\right)   \nonumber\\
& \quad - \sum_{p=k}^{n-1} \int
\frac{\eta_{p+1}^{N}D_{p+1,n}^{N}(dx_{0:n})}{\eta_{p+1}%
^{N}D_{p+1,n}^{N}(1)}
t_{k}\left(  x_{k-1},x_{k}\right)  \left(  \varphi_{n}(x_{n}%
)-\frac{\eta_{p+1}^{N}D_{p+1,n}^{N}(\varphi_{n})}{\eta_{p+1}^{N}D_{p+1,n}%
^{N}(1)}\right)   \nonumber\\
& = \sum_{p=k}^{n-1}\int
\left(  \frac{\eta_{p}^{N}D_{p,n}^{N}(dx_{0:n})}%
{\eta_{p}^{N}D_{p,n}^{N}(1)}-\frac{\eta_{p+1}^{N}D_{p+1,n}^{N}(dx_{0:n})}%
{\eta_{p+1}^{N}D_{p+1,n}^{N}(1)}\right)
t_{k}\left(  x_{k-1},x_{k}\right)  \left(  \varphi
_{n}(x_{n})-\frac{\eta_{p}^{N}D_{p,n}^{N}(\varphi_{n})}{\eta_{p}^{N}%
D_{p,n}^{N}(1)}\right)
\nonumber\\
& \quad -\sum_{p=k}^{n-1}\left(  \frac{\eta_{p}^{N}D_{p,n}^{N}(\varphi_{n})}%
{\eta_{p}^{N}D_{p,n}^{N}(1)}-\frac{\eta_{p+1}^{N}D_{p+1,n}^{N}(\varphi_{n}%
)}{\eta_{p+1}^{N}D_{p+1,n}^{N}(1)}\right)  \left(  \frac{\eta_{p+1}%
^{N}D_{p+1,n}^{N}(t_{k})}{\eta_{p+1}^{N}D_{p+1,n}^{N}(1)}-\frac{\eta_{p}%
^{N}D_{p,n}^{N}(t_{k})}{\eta_{p}^{N}D_{p,n}^{N}(1)}\right)  \nonumber\\
& \quad -\sum_{p=k}^{n-1}\left(  \frac{\eta_{p}^{N}D_{p,n}^{N}(\varphi_{n})}%
{\eta_{p}^{N}D_{p,n}^{N}(1)}-\frac{\eta_{p+1}^{N}D_{p+1,n}^{N}(\varphi_{n}%
)}{\eta_{p+1}^{N}D_{p+1,n}^{N}(1)}\right)  \frac{\eta_{p}^{N}D_{p,n}^{N}%
(t_{k})}{\eta_{p}^{N}D_{p,n}^{N}(1)}\nonumber\\
\begin{split}
&=\sum_{p=k}^{n-1}\int
\left(  \frac{\eta_{p}^{N}D_{p,n}^{N}(dx_{0:n})}{\eta_{p}%
^{N}D_{p,n}^{N}(1)}-\frac{\eta_{p+1}^{N}D_{p+1,n}^{N}(dx_{0:n})}{\eta
_{p+1}^{N}D_{p+1,n}^{N}(1)}\right) \\
& \qquad\qquad\times
\left(  t_{k}\left(  x_{k-1},x_{k}\right)  -\frac
{\eta_{p}^{N}D_{p,n}^{N}(t_{k})}{\eta_{p}^{N}D_{p,n}^{N}(1)}\right)  \left(
\varphi_{n}(x_{n})-\frac{\eta_{p}^{N}D_{p,n}^{N}(\varphi_{n})}{\eta_{p}%
^{N}D_{p,n}^{N}(1)}\right)
\end{split}
\label{eq:filtDervLpError_kthErrorTerm_term1_subterm1}\\
& \quad -\sum_{p=k}^{n-1}\left(  \frac{\eta_{p}^{N}D_{p,n}^{N}(\varphi_{n})}%
{\eta_{p}^{N}D_{p,n}^{N}(1)}-\frac{\eta_{p+1}^{N}D_{p+1,n}^{N}(\varphi_{n}%
)}{\eta_{p+1}^{N}D_{p+1,n}^{N}(1)}\right)  \left(  \frac{\eta_{p+1}%
^{N}D_{p+1,n}^{N}(t_{k})}{\eta_{p+1}^{N}D_{p+1,n}^{N}(1)}-\frac{\eta_{p}%
^{N}D_{p,n}^{N}(t_{k})}{\eta_{p}^{N}D_{p,n}^{N}(1)}\right)
\label{eq:filtDervLpError_kthErrorTerm_term1_subterm2}%
\end{align}
For the first equality, note that $\eta_{n}^{N}D_{n,n}^{N}(dx_{0:n}%
)=\mathbb{Q}_{n}^{N}(dx_{0:n}).$ It is straightforward to establish that
\begin{equation}
\eta_{p}^{N}D_{p,n}^{N}(dx_{0:n})/\eta_{p}^{N}\left(  g(\left.  y_{p}%
\right\vert \cdot)\right)  =\widetilde{\eta}_{p+1}^{N}D_{p+1,n}^{N}%
(dx_{0:n}),\label{eq:smcLaw_etapDpn_equivalence}%
\end{equation}
which is due to
\begin{align*}
&  \frac{\eta_{p}^{N}(dx_{p})}{\eta_p^N(g(\left. y_p \right\vert \cdot))} \prod\limits_{j=p}^{n-1}Q_{j+1}(x_{j},dx_{j+1})\\
&  =\frac{\eta_{p}^{N}(dx_{p})g(\left.  y_{p}\right\vert x_{p})f(\left.
x_{p+1}\right\vert x_{p})}{\eta_{p}^{N}\left(  g(\left.  y_{p}\right\vert
\cdot)f(\left.  x_{p+1}\right\vert \cdot)\right)  }\frac{dx_{p+1}\eta_{p}^{N}\left(
g(\left.  y_{p}\right\vert \cdot)f(\left.  x_{p+1}\right\vert \cdot)\right)
}{\eta_{p}^{N}\left(  g(\left.  y_{p}\right\vert \cdot)\right)  }%
\prod\limits_{j=p+1}^{n-1}Q_{j+1}(x_{j},dx_{j+1})\\
&  =M_{p+1}^{N}(x_{p+1},dx_{p})\widetilde{\eta}_{p+1}^{N}%
(dx_{p+1})\prod\limits_{j=p+1}^{n-1}Q_{j+1}(x_{j},dx_{j+1}).
\end{align*}
Thus
\begin{dmath}
 \frac{\eta_{p}^{N}D_{p,n}^{N}(dx_{0:p+1},dx_{n})}{\eta_{p}^{N}D_{p,n}%
^{N}(1)}-\frac{\eta_{p+1}^{N}D_{p+1,n}^{N}(dx_{0:p+1},dx_{n})}{\eta_{p+1}%
^{N}D_{p+1,n}^{N}(1)}\nonumber\\
=\frac{\widetilde{\eta}_{p+1}^{N}D_{p+1,n}^{N}(dx_{0:p+1},dx_{n}%
)}{\widetilde{\eta}_{p+1}^{N}D_{p+1,n}^{N}(1)}-\frac{\eta_{p+1}^{N}%
D_{p+1,n}^{N}(dx_{0:p+1},dx_{n})}{\eta_{p+1}^{N}D_{p+1,n}^{N}(1)}\nonumber\\
 =\left(  \frac{\widetilde{\eta
}_{p+1}^{N}(dx_{p+1})Q_{p+1,n}(1)(x_{p+1})}{\widetilde{\eta}_{p+1}%
^{N}Q_{p+1,n}(1)}-\frac{\eta_{p+1}^{N}(dx_{p+1})Q_{p+1,n}(1)(x_{p+1})}%
{\eta_{p+1}^{N}Q_{p+1,n}(1)}\right)
\mathcal{M}_{p+1}^{N}(x_{p+1},dx_{0:p})
\frac{Q_{p+1,n}(x_{p+1},dx_{n}%
)}{Q_{p+1,n}(1)(x_{p+1})}.\label{eq:lawDecomp}%
\end{dmath}
In the first line, variables $x_{p+2:n-1}$\ of the measures $\eta_pD_{p,n}^N(dx_{0:n})$ and
$\eta_{p+1}D_{p+1,n}^N(dx_{0:n})$ are integrated out while the
second line follows from (\ref{eq:smcLaw_etapDpn_equivalence}). Using
(\ref{eq:lawDecomp}), the term
(\ref{eq:filtDervLpError_kthErrorTerm_term1_subterm1}) can be expressed as
\begin{align}
\sum_{p=k}^{n-1} &  \int\left(  \frac{\widetilde{\eta}_{p+1}^{N}%
(dx_{p+1})Q_{p+1,n}(1)(x_{p+1})}{\widetilde{\eta}_{p+1}^{N}Q_{p+1,n}(1)}%
-\frac{\eta_{p+1}^{N}(dx_{p+1})Q_{p+1,n}(1)(x_{p+1})}{\eta_{p+1}^{N}%
Q_{p+1,n}(1)}\right)  \nonumber\\
&  \times P_{p+1,n}\left(  \varphi_{n}-\frac{\eta_{p}^{N}D_{p,n}%
^{N}(\varphi_{n})}{\eta_{p}^{N}D_{p,n}^{N}(1)}\right)  (x_{p+1})\mathcal{M}%
_{p+1}^{N}\left(  t_{k}  -\frac{\widetilde{\eta
}_{p+1}^{N}D_{p+1,n}^{N}(t_{k})}{\widetilde{\eta}_{p+1}^{N}D_{p+1,n}^{N}%
(1)}\right)  (x_{p+1})\nonumber
\end{align}
Note that by (\ref{eq:assAeq2}), \ (\ref{eq:contractionEst}) and
(\ref{eq:contractionEst2}),%
\[
\left\vert P_{p+1,n}\left(  \varphi_{n}-\frac{\eta_{p}^{N}D_{p,n}%
^{N}(\varphi_{n})}{\eta_{p}^{N}D_{p,n}^{N}(1)}\right)  (x_{p+1})\right\vert
\leq\beta\left(  \frac{Q_{p+1,n}(x_{p+1},dx_{n})}{Q_{p+1,n}(1)(x_{p+1}%
)}\right)  ,
\]
\[
\left\vert \mathcal{M}_{p+1}^{N}\left(  t_{k}
-\frac{\widetilde{\eta}_{p+1}^{N}D_{p+1,n}^{N}(t_{k})}{\widetilde{\eta}%
_{p+1}^{N}D_{p+1,n}^{N}(1)}\right)  (x_{p+1})\right\vert \leq C\beta\left(
M_{p+1}^{N}\ldots M_{k+1}^{N}\right)  .
\]
Thus by (\ref{eq:contractionEst})\ and Lemma \ref{lem:lpErrorFilter2}%
, we conclude that there exists a finite constant $C_{r}$ (depending only on
$r$)
\begin{multline}
\sum_{p=k}^{n-1}\sqrt{N}\mathbb{E}\left\{  \left\vert \int\left(
t_{k}\left(  x_{k-1},x_{k}\right)  -\frac{\eta_{p}^{N}D_{p,n}^{N}(t_{k})}%
{\eta_{p}^{N}D_{p,n}^{N}(1)}\right)  \left(  \varphi_{n}(x_{n})-\frac{\eta
_{p}^{N}D_{p,n}^{N}(\varphi_{n})}{\eta_{p}^{N}D_{p,n}^{N}(1)}\right)
\right. \right. \\
\times\left.  \left.
\left(  \frac{\eta_{p}%
^{N}D_{p,n}^{N}(dx_{0:n})}{\eta_{p}^{N}D_{p,n}^{N}(1)}-\frac{\eta_{p+1}%
^{N}D_{p+1,n}^{N}(dx_{0:n})}{\eta_{p+1}^{N}D_{p+1,n}^{N}(1)}\right)
\right\vert ^{r}\right\}  ^{\frac{1}{r}}
  \leq(n-k)C_{r}\overline{\rho}^{n-k}%
\label{eq:filtDervLpError_kthErrorTerm_term1_subterm1_bound}%
\end{multline}
For the term (\ref{eq:filtDervLpError_kthErrorTerm_term1_subterm2}), it
follows from (\ref{eq:lawDecomp})%
\begin{align*}
&  \frac{\eta_{p+1}^{N}D_{p+1,n}^{N}(t_{k})}{\eta_{p+1}^{N}D_{p+1,n}^{N}%
(1)}-\frac{\eta_{p}^{N}D_{p,n}^{N}(t_{k})}{\eta_{p}^{N}D_{p,n}^{N}(1)}\\
&  =\int\frac{\eta_{p+1}^{N}(dx_{p+1})Q_{p+1,n}(1)(x_{p+1})}{\eta_{p+1}%
^{N}Q_{p+1,n}(1)}\left(  \mathcal{M}_{p+1}^{N}\left(  t_{k}\right)
(x_{p+1})-\frac{\widetilde{\eta}_{p+1}^{N}\left(  Q_{p+1,n}(1)\mathcal{M}%
_{p+1}^{N}\left(  t_{k}\right)  \right)  }{\widetilde{\eta}_{p+1}^{N}%
Q_{p+1,n}(1)}\right)  .
\end{align*}
Thus, using (\ref{eq:assAeq2}) and (\ref{eq:contractionEst2}), there exists
some non-random constant $C$ such that the following bound holds almost surely
for all integers $k\leq p<n$, $N$:
\[
\left\vert \frac{\eta_{p+1}^{N}D_{p+1,n}^{N}(t_{k})}{\eta_{p+1}^{N}%
D_{p+1,n}^{N}(1)}-\frac{\eta_{p}^{N}D_{p,n}^{N}(t_{k})}{\eta_{p}^{N}%
D_{p,n}^{N}(1)}\right\vert \leq C\overline{\rho}^{p-k+1}.
\]
Combine this bound with Lemma \ref{lem:lpErrorFilter0} to conclude that there
exists a finite (non-random) constant $C_{r}$ (depending only on $r$) such
that for all integers $k\leq p<n$, $N$:%
\begin{dmath}
\sqrt{N}\mathbb{E}
{
\left\{  \left\vert \left(  \frac{\eta_{p}^{N}D_{p,n}%
^{N}(\varphi_{n})}{\eta_{p}^{N}D_{p,n}^{N}(1)}-\frac{\eta_{p+1}^{N}%
D_{p+1,n}^{N}(\varphi_{n})}{\eta_{p+1}^{N}D_{p+1,n}^{N}(1)}\right)  \left(
\frac{\eta_{p+1}^{N}D_{p+1,n}^{N}(t_{k})}{\eta_{p+1}^{N}D_{p+1,n}^{N}%
(1)}-\frac{\eta_{p}^{N}D_{p,n}^{N}(t_{k})}{\eta_{p}^{N}D_{p,n}^{N}(1)}\right)
\right\vert ^{r}\right \}
}^{\frac{1}{r}}
\leq C_{r}\overline{\rho}^{n-k}%
\label{eq:filtDervLpError_kthErrorTerm_term1_subterm2_bound}%
\end{dmath}
The result now follows from
(\ref{eq:filtDervLpError_kthErrorTerm_term1_subterm1_bound}) and
(\ref{eq:filtDervLpError_kthErrorTerm_term1_subterm2_bound}).
\end{proof}

\begin{lem}
\label{theoremLperrorProofPart2} Assume (A). For any $r\geq1$ there exists a
constant $C_{r}$ such that for all $\theta$, $y$, $0\leq k<n$, $N\geq1$,
$\varphi_{n}\in\mbox{Osc}_{1}(\mathcal{X})$,%
\begin{multline}
\sqrt{N}\mathbb{E}_{\theta}^{y}\left\{  \left\vert
\int \frac{\eta_{\theta,k}^{N}D_{\theta,k,n}^{N}(dx_{0:n})}%
{\eta_{\theta,k}^{N}D_{\theta,k,n}^{N}(1)}
 t_{\theta,k}\left(x_{k-1},x_{k}\right)  \left(  \varphi_{n}(x_{n})-\frac{\eta_{\theta,k}%
^{N}D_{\theta,k,n}^{N}(\varphi_{n})}{\eta_{\theta,k}^{N}D_{\theta,k,n}^{N}%
(1)}\right)  \right.  \right. \\
\left.  \left.  -\int \mathbb{Q}_{\theta,n}(dx_{0:n})
t_{\theta,k}\left(  x_{k-1},x_{k}\right)  \left(
\varphi_{n}(x_{n})-\eta_{\theta,n}(\varphi_{n})\right)
\right\vert ^{r}\right\}  ^{\frac{1}{r}}\leq C_{r}\overline{\rho
}^{n-k} \label{eq:filtDervLpError_kthErrorTerm_term2_bound}%
\end{multline}

\end{lem}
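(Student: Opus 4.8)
The plan is to exploit a conditional-independence structure that is built into the operators $D_{k,n}^N$ and $D_{k,n}$ (I drop $\theta$, as in the other proofs). Write $B^{N}$ for the first integral in (\ref{eq:filtDervLpError_kthErrorTerm_term2_bound}) and $A=\mathbb{Q}_{n}\big(t_{k}(\varphi_{n}-\eta_{n}(\varphi_{n}))\big)$ for the second. Since $t_{k}$ depends only on $(x_{k-1},x_{k})$ and $\varphi_{n}$ only on $x_{n}$, both terms factor through the time-$k$ marginal: under the normalised measure $\eta_{k}^{N}D_{k,n}^{N}/\eta_{k}^{N}D_{k,n}^{N}(1)$ the backward coordinates $x_{0:k-1}$ (built from the $M_{j}^{N}$) and the forward coordinates $x_{k+1:n}$ (built from the \emph{true} kernels $Q_{j+1}$) are conditionally independent given $x_{k}$. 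Setting $u^{N}(x_{k})=M_{k}^{N}(t_{k}(\cdot,x_{k}))(x_{k})$, $u(x_{k})=M_{k}(t_{k}(\cdot,x_{k}))(x_{k})$, $v(x_{k})=P_{k,n}(\varphi_{n})(x_{k})$, and using that $\eta_{k}D_{k,n}/\eta_{k}D_{k,n}(1)=\mathbb{Q}_{n}$, this gives the compact form $B^{N}=\mathrm{Cov}_{\nu_{k,n}^{N}}(u^{N},v)$ and $A=\mathrm{Cov}_{\nu_{k,n}}(u,v)$ with $\nu_{k,n}^{N}=\Psi_{Q_{k,n}(1)}(\eta_{k}^{N})$ and $\nu_{k,n}=\Psi_{Q_{k,n}(1)}(\eta_{k})$. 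The key point is that $v$ is the \emph{same deterministic} function in both terms (the forward kernels inside $D^{N}$ are the true ones), and $\mathrm{osc}(v)\le\beta(P_{k,n})\,\mathrm{osc}(\varphi_{n})\le\overline{\rho}^{\,n-k}$ by (\ref{eq:contractionEst}) with $q=n$.

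The geometric factor $\overline{\rho}^{\,n-k}$ will come entirely from the smallness of $v$, so I would arrange the decomposition so that a centred copy of $v$ multiplies every term. Since covariance is unchanged by shifting $v$ by a constant, replace $v$ by $\bar v:=v-\nu_{k,n}(v)$, so $\Vert\bar v\Vert\le\mathrm{osc}(v)\le\overline{\rho}^{\,n-k}$, and write
\[
B^{N}-A=\big[\nu_{k,n}^{N}((u^{N}-u)\bar v)+(\nu_{k,n}^{N}-\nu_{k,n})(u\bar v)\big]-\big[(\nu_{k,n}^{N}(u^{N})-\nu_{k,n}(u))\,\nu_{k,n}^{N}(\bar v)+\nu_{k,n}(u)\,(\nu_{k,n}^{N}-\nu_{k,n})(\bar v)\big].
\]
Every summand carries one factor $\bar v$ (hence $\overline{\rho}^{\,n-k}$, using $\vert\nu^{N}(\bar v)\vert\le\Vert\bar v\Vert$) and one particle-approximation error that I expect to be $\mathcal{O}(N^{-1/2})$ in $\mathbb{L}_{r}$.

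For the pure measure-error factors $(\nu_{k,n}^{N}-\nu_{k,n})(\psi)$ I would invoke the second inequality of Lemma \ref{lem:lpErrorFilter}, eq.\ (\ref{eq:lpErrorFilter1}), at time $k$ with $G=Q_{k,n}(1)$: by (\ref{eq:contractionEst}) $b_{k,n}\le\rho^{2}\delta^{2}$, so $G$ satisfies the hypothesis with $c_{G}^{-1}\le\rho^{2}\delta^{2}$, and with $\psi\in\{u\bar v,\bar v\}$ (whose oscillations are $\le\mathrm{const}\cdot\overline{\rho}^{\,n-k}$, since $\Vert u\Vert\le 2c$ by (\ref{eq:assAeq2})) this yields an $\mathbb{L}_{r}$ bound of order $\overline{\rho}^{\,n-k}/\sqrt N$. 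For the integrand-error term I would bound $\vert\nu_{k,n}^{N}((u^{N}-u)\bar v)\vert\le\overline{\rho}^{\,n-k}\,\nu_{k,n}^{N}(\vert u^{N}-u\vert)$ and control $u^{N}(x_{k})-u(x_{k})$ \emph{uniformly in} $x_{k}$ by recognising it as the weighted ratio error $\big[\Psi_{q(x_{k}|\cdot)}(\eta_{k-1}^{N})-\Psi_{q(x_{k}|\cdot)}(\eta_{k-1})\big](t_{k}(\cdot,x_{k}))$, to which Lemma \ref{lem:khinchine} (via Remark \ref{rem:khinchine}, or (\ref{eq:lpErrorFilter1})) applies with $G=q(x_{k}|\cdot)$ (bounded ratio $\le\rho^{2}\delta^{2}$) and $h=t_{k}(\cdot,x_{k})$ (oscillation $\le 2c$), giving a constant uniform in $x_{k}$; the same bound handles $\nu_{k,n}^{N}(u^{N})-\nu_{k,n}(u)=\nu_{k,n}^{N}(u^{N}-u)+(\nu_{k,n}^{N}-\nu_{k,n})(u)$ in the product term, the $\overline{\rho}^{\,n-k}$ there being supplied by the accompanying $\nu_{k,n}^{N}(\bar v)$. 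Collecting the four bounds, multiplying by $\sqrt N$, and using $\Vert\bar v\Vert\le\overline{\rho}^{\,n-k}$ yields $C_{r}\overline{\rho}^{\,n-k}$; crucially there is no extra $(n-k)$ factor because $\overline{\rho}^{\,n-k}$ enters once, through $v$, rather than through a telescoping sum as in Lemma \ref{theoremLperrorProofPart1}.

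The main obstacle is the term $\nu_{k,n}^{N}((u^{N}-u)\bar v)$, in which a \emph{random} integrand $u^{N}$ (measurable w.r.t.\ $\mathcal{F}_{k-1}^{N}$ through $\eta_{k-1}^{N}$) is integrated against a \emph{random} measure $\nu_{k,n}^{N}$ (depending on $\eta_{k}^{N}$). Disentangling these requires a uniform-in-$x_{k}$ $\mathbb{L}_{r}$ estimate for $u^{N}-u$ combined with a conditioning step, e.g.\ Jensen on $\nu_{k,n}^{N}$, then the domination $\nu_{k,n}^{N}(\mathrm{d}x_{k})\le\rho^{2}\delta^{2}\,\eta_{k}^{N}(\mathrm{d}x_{k})$ (from $b_{k,n}\le\rho^{2}\delta^{2}$), then conditioning on $\mathcal{F}_{k-1}^{N}$ so that the sampling error at time $k$ and the ratio error at time $k-1$ are treated separately. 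Getting this accounting right, while keeping the single factor $\overline{\rho}^{\,n-k}$ attached throughout, is the delicate part of the argument.
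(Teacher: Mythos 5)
Your argument is correct, but it takes a genuinely different route from the paper's. The paper splits the difference into $\int(\nu^{N}-\mathbb{Q}_{n})\,t_{k}(\varphi_{n}-\eta_{n}(\varphi_{n}))$ plus the centering term $(\eta_{n}(\varphi_{n})-\nu^{N}(\varphi_{n}))\,\nu^{N}(t_{k})$, where $\nu^{N}=\eta_{k}^{N}D_{k,n}^{N}/\eta_{k}^{N}D_{k,n}^{N}(1)$, and controls the first piece by telescoping the path-space measure error over particle generations $p=0,\dots,k$ (via the identity (\ref{eq:smcLaw_etapDpn_equivalence}) and Lemma \ref{lem:lpErrorFilter2}), the $p$-th term contributing $\overline{\rho}^{\,n-k}\overline{\rho}^{\,k-1-p}$ so that the geometric sum stays bounded. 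You instead use the conditional independence of $x_{0:k-1}$ and $x_{k+1:n}$ given $x_{k}$ under $\eta_{k}^{N}D_{k,n}^{N}$ to collapse everything onto the two $x_{k}$-functions $u^{N}$ and $v=P_{k,n}(\varphi_{n})$; your terms 3 and 4 reassemble into exactly the paper's centering term, while your terms 1 and 2 replace the paper's explicit telescoping by an appeal to the already-proven weighted stability bound (\ref{eq:lpErrorFilter1}), which performs the telescoping internally. Both proofs extract the single factor $\overline{\rho}^{\,n-k}$ from $\beta(P_{k,n})$. Your route buys a shorter and more structural argument; its cost is the extra step you correctly flag, namely integrating the $\mathcal{F}_{k-1}^{N}$-measurable random integrand $u^{N}-u$ against the $\mathcal{F}_{k}^{N}$-measurable random measure $\Psi_{Q_{k,n}(1)}(\eta_{k}^{N})$. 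Your sketch of that step does close: Jensen, the domination $\Psi_{Q_{k,n}(1)}(\eta_{k}^{N})(dx_{k})\leq\rho^{2}\delta^{2}\,\eta_{k}^{N}(dx_{k})$, conditioning on $\mathcal{F}_{k-1}^{N}$ to replace $\eta_{k}^{N}$ by $\Phi_{k}(\eta_{k-1}^{N})(dx_{k})\leq\rho\,dx_{k}$, and Fubini against the pointwise-in-$x_{k}$ bound from (\ref{eq:lpErrorFilter1}) applied at time $k-1$ with $G=q_{\theta}(x_{k}|\cdot)$; the finiteness of the dominating measure $dx$ in Assumption (A) is what makes the last step work. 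One caution: Remark \ref{rem:khinchine} by itself only controls the one-step sampling error ($\eta_{k-1}^{N}$ against $\Phi_{k-1}(\eta_{k-2}^{N})$), not $\eta_{k-1}^{N}$ against $\eta_{k-1}$, so for $u^{N}-u$ you genuinely need the full statement (\ref{eq:lpErrorFilter1}) rather than the remark.
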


\begin{proof}
\begin{eqnarray}
\lefteqn{\int \frac{\eta_{k}^{N}D_{k,n}^{N}(dx_{0:n})}{\eta_{k}^{N}D_{k,n}%
^{N}(1)}
t_{k}\left(  x_{k-1},x_{k}\right)  \left(  \varphi_{n}(x_{n}%
)-\frac{\eta_{k}^{N}D_{k,n}^{N}(\varphi_{n})}{\eta_{k}^{N}D_{k,n}^{N}%
(1)}\right)   } \nonumber\\
&=&  \int \mathbb{Q}_{n}(dx_{0:n})
t_{k}\left(  x_{k-1},x_{k}\right)  \left(  \varphi_{n}(x_{n}%
)-\eta_{n}(\varphi_{n})\right)   \nonumber\\
&& + \int
\left(  \frac{\eta_{k}^{N}D_{k,n}^{N}%
(dx_{0:n})}{\eta_{k}^{N}D_{k,n}^{N}(1)}-\mathbb{Q}_{n}(dx_{0:n})\right)
t_{k}\left(  x_{k-1},x_{k}\right)  \left(  \varphi_{n}(x_{n}%
)-\eta_{n}(\varphi_{n})\right)
\label{eq:filtDervLpError_kthErrorTerm_term2_subterm1}\\
&& + \left(  \eta_{n}(\varphi_{n})-\frac{\eta_{k}^{N}D_{k,n}^{N}(\varphi_{n}%
)}{\eta_{k}^{N}D_{k,n}^{N}(1)}\right)  \frac{\eta_{k}^{N}D_{k,n}^{N}(t_{k}%
)}{\eta_{k}^{N}D_{k,n}^{N}(1)}%
\label{eq:filtDervLpError_kthErrorTerm_term2_subterm2}%
\end{eqnarray}
To study the errors, term
(\ref{eq:filtDervLpError_kthErrorTerm_term2_subterm1})\ may be decomposed as%
\begin{dmath*}
\int \left(  \frac{\eta_{k}^{N}D_{k,n}^{N}(dx_{0:n}%
)}{\eta_{k}^{N}D_{k,n}^{N}(1)}-\mathbb{Q}_{n}(dx_{0:n})\right)
t_{k}\left(  x_{k-1},x_{k}\right)  \left(  \varphi_{n}(x_{n})-\eta
_{n}(\varphi_{n})\right)   \\
=\sum\limits_{p=0}^{k}\int
\left(  \frac{\eta_{p}%
^{N}D_{p,n}^{N}(dx_{0:n})}{\eta_{p}^{N}D_{p,n}^{N}(1)}-\frac{\widetilde{\eta
}_{p}^{N}D_{p,n}^{N}(dx_{0:n})}{\widetilde{\eta}_{p}^{N}D_{p,n}^{N}%
(1)}\right)
t_{k}\left(  x_{k-1},x_{k}\right)  \left(
\varphi_{n}(x_{n})-\eta_{n}(\varphi_{n})\right)
\end{dmath*}
with the convention that $\widetilde{\eta}_{0}^{N}=\Phi_{0}\left(  \eta
_{-1}^{N}\right)  =\eta_{0}.$ The term corresponding to $p=k$ can be
expressed as%
\[
\int\left(  \frac{\eta_{k}^{N}(dx_{k})Q_{k,n}(1)(x_{k})}{\eta_{k}^{N}%
Q_{k,n}(1)}-\frac{\widetilde{\eta}_{k}^{N}(dx_{k})Q_{k,n}(1)(x_{k}%
)}{\widetilde{\eta}_{k}^{N}Q_{k,n}(1)}\right)  M_{k}^{N}(x_{k},dx_{k-1}%
)t_{k}\left(  x_{k-1},x_{k}\right)  P_{k,n}(\varphi_{n}-\eta_{n}(\varphi
_{n}))(x_{k})
\]
Using Lemma \ref{lem:khinchine} and Remark \ref{rem:khinchine},
\begin{dmath*}
\sqrt{N}\mathbb{E}
{
\left\{  \left\vert \int\left(  \frac{\eta_{k}^{N}%
(dx_{k})Q_{k,n}(1)(x_{k})}{\eta_{k}^{N}Q_{k,n}(1)}-\frac{\widetilde{\eta}%
_{k}^{N}(dx_{k})Q_{k,n}(1)(x_{k})}{\widetilde{\eta}_{k}^{N}Q_{k,n}(1)}\right)
M_{k}^{N}(t_{k})\left(  x_{k}\right)  P_{k,n}(\varphi_{n}-\eta_{n}(\varphi
_{n}))(x_{k})\right\vert ^{r}\right\}
}^{\frac{1}{r}}\\
\leq C_{r}\overline{\rho}^{n-k}%
\end{dmath*}
Similarly, the $p$th term when $p<k$ can be expressed as%
\begin{dmath*}
\int\left(  \frac{\eta_{p}^{N}D_{p,n}^{N}(dx_{0:n})}{\eta_{p}^{N}%
D_{p,n}^{N}(1)}-\frac{\widetilde{\eta}_{p}^{N}D_{p,n}^{N}(dx_{0:n}%
)}{\widetilde{\eta}_{p}^{N}D_{p,n}^{N}(1)}\right)  t_{k}\left(  x_{k-1}%
,x_{k}\right)  \left(  \varphi_{n}(x_{n})-\eta_{n}(\varphi_{n})\right)  \\
=\int\left(  \frac{\Phi_{p,k-1}(\eta_{p}^{N})(dx_{k-1})Q_{k-1,n}%
(1)(x_{k-1})}{\Phi_{p,k-1}(\eta_{p}^{N})Q_{k-1,n}(1)}-\frac{\Phi
_{p,k-1}(\widetilde{\eta}_{p}^{N})(dx_{k-1})Q_{k-1,n}(1)(x_{k-1})}%
{\Phi_{p,k-1}(\widetilde{\eta}_{p}^{N})Q_{k-1,n}(1)}\right)  \\
\times\int\frac{Q_{k}(x_{k-1},dx_{k})Q_{k,n}(1)(x_{k})}{Q_{k-1,n}%
(1)(x_{k-1})}t_{k}\left(  x_{k-1},x_{k}\right)  P_{k,n}\left(  \varphi
_{n}-\eta_{n}(\varphi_{n})\right)  (x_{k})
\end{dmath*}
Using Lemma \ref{lem:lpErrorFilter2} for the outer integral (recall $\Phi_{p,k-1}(\widetilde{\eta}_p^N)
=\Phi_{p-1,k-1}(\eta_{p-1}^N))$,%
\begin{dmath*}
\sqrt{N}\mathbb{E}
{\left\{  \left\vert \int
\left(  \frac{\eta_{p}^{N}D_{p,n}^{N}(dx_{0:n})}{\eta_{p}^{N}D_{p,n}^{N}%
(1)}-\frac{\Phi_{p}\left(  \eta_{p-1}^{N}\right)  D_{p,n}^{N}(dx_{0:n})}%
{\Phi_{p}\left(  \eta_{p-1}^{N}\right)  D_{p,n}^{N}(1)}\right)
t_{k}\left(  x_{k-1}%
,x_{k}\right)  \left(  \varphi_{n}(x_{n})-\eta_{n}(\varphi_{n})\right)
  \right\vert
^{r}\right\}
}^{\frac{1}{r}}\\
\leq C_{r}\overline{\rho}^{n-k}\overline{\rho}^{k-1-p}%
\end{dmath*}
Combining both cases for $p$ yields
\begin{dmath}
\sqrt{N}\mathbb{E}
{
\left\{  \left\vert \int
\left(  \frac{\eta_{k}^{N}D_{k,n}^{N}(dx_{0:n})}{\eta_{k}^{N}D_{k,n}^{N}%
(1)}-\mathbb{Q}_{n}(dx_{0:n})\right)
t_{k}\left(  x_{k-1}%
,x_{k}\right)  \left(  \varphi_{n}(x_{n})-\eta_{n}(\varphi_{n})\right)
  \right\vert ^{r}\right\}
}^{\frac{1}{r}}\\
\leq C_{r}\overline{\rho}^{n-k}\sum_{p=0}^{k-1}\overline{\rho}%
^{k-1-p}+C_{r}\overline{\rho}^{n-k}\leq C_{r}\overline{\rho}^{n-k}\left(
1+\frac{1}{1-\overline{\rho}}\right)
.\label{eq:filtDervLpError_kthErrorTerm_term2_subterm1_bound}%
\end{dmath}
For (\ref{eq:filtDervLpError_kthErrorTerm_term2_subterm2}), Lemma
\ref{lem:lpErrorFilter1} yields the following estimate%
\begin{equation}
\sqrt{N}\mathbb{E}\left\{  \left\vert \left(  \eta_{n}(\varphi_{n})-\frac
{\eta_{k}^{N}D_{k,n}^{N}(\varphi_{n})}{\eta_{k}^{N}D_{k,n}^{N}(1)}\right)
\frac{\eta_{k}^{N}D_{k,n}^{N}(t_{k})}{\eta_{k}^{N}D_{k,n}^{N}(1)}\right\vert
^{r}\right\}  ^{\frac{1}{r}}\leq C_{r}\overline{\rho}^{n-k}%
.\label{eq:filtDervLpError_kthErrorTerm_term2_subterm2_bound}%
\end{equation}
The proof is completed by summing the bounds in
(\ref{eq:filtDervLpError_kthErrorTerm_term2_subterm1_bound}),
(\ref{eq:filtDervLpError_kthErrorTerm_term2_subterm2_bound}) and inflating
constant $C_{r}$ appropriately.
\end{proof}

\subsection{Proof of Theorem \ref{theo:Lperror}}%

\begin{dmath*}
\zeta_{n}^{N}(\varphi_{n})-\zeta_{n}(\varphi_{n})
=\sum\limits_{k=0}^{n}\int \mathbb{Q}_{n}^{N}(dx_{0:n})
t_{k}\left(  x_{k-1},x_{k}\right)  \left(
\varphi_{n}(x_{n})-\eta_{n}^{N}(\varphi_{n})\right)  \\
-\int \mathbb{Q}_{n}(dx_{0:n})
t_{k}\left(  x_{k-1},x_{k}\right)  \left(  \varphi
_{n}(x_{n})-\eta_{n}(\varphi_{n})\right).
\end{dmath*}
To prove the theorem, it will be shown that the error due to the $k$-th term
in this expression is
\begin{multline*}
\sqrt{N}\mathbb{E}\left\{  \left\vert \int \mathbb{Q}_{n}^{N}(dx_{0:n})
t_{k}\left(  x_{k-1},x_{k}\right)  \left(  \varphi_{n}(x_{n})-\eta_{n}^{N}(\varphi_{n})\right)
 \right. \right. \\
- \left. \left. \int \mathbb{Q}_{n}(dx_{0:n})
t_{k}\left(  x_{k-1},x_{k}\right)  \left(
\varphi_{n}(x_{n})-\eta_{n}(\varphi_{n})\right)  %
\right\vert ^{r}\right\}  ^{\frac{1}{r}}
\leq(n-k+1)C_{r}\overline{\rho}^{n-k}%
\end{multline*}
where constant $C_{r}$\ depends only on $r$ and the bounds in Assumption (A)
(through the estimates $\overline{\rho}$ and $\rho^{2}\delta^{2}$\ in
(\ref{eq:contractionEst}) as well as the bounds on the score).
\begin{eqnarray}
&& \int \mathbb{Q}_{n}^{N}(dx_{0:n})
t_{k}\left(  x_{k-1},x_{k}\right)  \left(  \varphi_{n}(x_{n})-\eta
_{n}^{N}(\varphi_{n})\right)
-\int \mathbb{Q}_{n}(dx_{0:n})
t_{k}\left(x_{k-1},x_{k}\right)  \left(  \varphi_{n}(x_{n})-\eta_{n}(\varphi_{n})\right)
\nonumber \\
&&
\begin{split}
= &\int   \mathbb{Q}_{n}^{N}(dx_{0:n})
t_{k}\left(  x_{k-1},x_{k}\right)  \left(  \varphi_{n}(x_{n}%
)-\eta_{n}^{N}(\varphi_{n})\right)
\\
&\quad -\int
\frac{\eta_{k}^{N}D_{k,n}^{N}(dx_{0:n})}{\eta_{k}^{N}D_{k,n}^{N}(1)}
t_{k}\left(  x_{k-1},x_{k}\right)  \left(  \varphi_{n}(x_{n})-\frac{\eta
_{k}^{N}D_{k,n}^{N}(\varphi_{n})}{\eta_{k}^{N}D_{k,n}^{N}(1)}\right)
\label{eq:filtDervLpError_kthErrorTerm_term1}
\end{split} \\
&& \quad
\begin{split}
 & +\int \frac{\eta_{k}^{N}D_{k,n}^{N}(dx_{0:n})}{\eta_{k}^{N}D_{k,n}^{N}(1)}
 t_{k}\left(  x_{k-1},x_{k}\right)  \left(  \varphi_{n}(x_{n}%
)-\frac{\eta_{k}^{N}D_{k,n}^{N}(\varphi_{n})}{\eta_{k}^{N}D_{k,n}^{N}%
(1)}\right)   \\
 & \quad -\int  \mathbb{Q}_{n}(dx_{0:n}) t_{k}\left(  x_{k-1},x_{k}\right)  \left(  \varphi_{n}%
(x_{n})-\eta_{n}(\varphi_{n})\right)
\label{eq:filtDervLpError_kthErrorTerm_term2}%
\end{split}
\end{eqnarray}
The proof is completed by summing the bounds in Lemma
\ref{theoremLperrorProofPart1} for
(\ref{eq:filtDervLpError_kthErrorTerm_term1}) and Lemma
\ref{theoremLperrorProofPart2} for
(\ref{eq:filtDervLpError_kthErrorTerm_term2}) and inflating constant $C_{r}$ appropriately.

\subsection{Proof of Theorem \ref{theo:gradClt}}

The following result which characterizes the asymptotic behavior of the local
sampling errors defined in (\ref{eq:localSampErr}) is proved in \citet[Theorem 9.3.1]{delmoral2004}

\begin{lem}
\label{lem:locErrClt} Let $\{\varphi_{n}\}_{n\geq0}\subset\mathcal{B}%
(\mathcal{X})$. For any $\theta$, $y$, $n\geq0$, the random vector
$(V_{\theta,0}^{N}(\varphi_{0}),\ldots,V_{\theta,n}^{N}(\varphi_{n}))$
converges in law, as $N\rightarrow\infty$, to $(V_{\theta,0}(\varphi
_{0}),\ldots,V_{\theta,n}(\varphi_{n}))$
where $V_{\theta,i}$ is defined in (\ref{eq:localSampErrLimit}).
\end{lem}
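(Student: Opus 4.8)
The plan is to reduce the joint convergence to a one–dimensional statement via the Cram\'er--Wold device and then invoke a martingale central limit theorem for a triangular array. First I would fix arbitrary reals $a_{0},\ldots,a_{n}$ and study the scalar sum
\[
S^{N}:=\sum_{p=0}^{n}a_{p}V_{\theta,p}^{N}(\varphi_{p})=\sum_{p=0}^{n}\frac{a_{p}}{\sqrt{N}}\sum_{i=1}^{N}\left(\varphi_{p}(X_{p}^{(i)})-\widetilde{\eta}_{\theta,p}^{N}(\varphi_{p})\right),
\]
where I have used $\eta_{\theta,p}^{N}=\tfrac1N\sum_{i}\delta_{X_{p}^{(i)}}$ together with the fact that the definition (\ref{eq:etaTildaDef}) of $\widetilde{\eta}_{\theta,p}^{N}=\Phi_{\theta,p}(\eta_{\theta,p-1}^{N})$ coincides with the mixture (\ref{eq:bootstrap}) from which the time-$p$ particles are sampled. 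If I establish $S^{N}\Rightarrow\mathcal{N}\bigl(0,\sum_{p}a_{p}^{2}(\eta_{\theta,p}(\varphi_{p}^{2})-\eta_{\theta,p}(\varphi_{p})^{2})\bigr)$ for every choice of the $a_{p}$, then the limiting vector has a diagonal Gaussian covariance, which is exactly the claim that the $V_{\theta,p}(\varphi_{p})$ are independent with the stated variances.

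The decisive structural observation is that, conditionally on $\mathcal{F}_{p-1}^{N}$, the particles $\{X_{p}^{(i)}\}_{1\le i\le N}$ are i.i.d. with common law $\widetilde{\eta}_{\theta,p}^{N}$. Hence, ordering the summands lexicographically in $(p,i)$ and setting $\xi_{p,i}^{N}=\tfrac{a_{p}}{\sqrt{N}}(\varphi_{p}(X_{p}^{(i)})-\widetilde{\eta}_{\theta,p}^{N}(\varphi_{p}))$ relative to the nested filtration $\mathcal{G}_{p,i}^{N}=\sigma(\mathcal{F}_{p-1}^{N},X_{p}^{(1)},\ldots,X_{p}^{(i)})$, one obtains a martingale-difference array, since $\mathbb{E}[\varphi_{p}(X_{p}^{(i)})\mid\mathcal{G}_{p,i-1}^{N}]=\widetilde{\eta}_{\theta,p}^{N}(\varphi_{p})$ gives $\mathbb{E}[\xi_{p,i}^{N}\mid\mathcal{G}_{p,i-1}^{N}]=0$. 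I would then verify the two hypotheses of the martingale CLT. The conditional Lindeberg condition is immediate: each $\varphi_{p}$ is bounded and $n$ is fixed, so $|\xi_{p,i}^{N}|\le 2|a_{p}|\,\Vert\varphi_{p}\Vert/\sqrt{N}\to0$ uniformly, killing the truncated second moments for large $N$.

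The step that carries real content is the convergence of the predictable quadratic variation. Conditional independence gives the exact identity
\[
\sum_{p=0}^{n}\sum_{i=1}^{N}\mathbb{E}\!\left[(\xi_{p,i}^{N})^{2}\,\middle|\,\mathcal{G}_{p,i-1}^{N}\right]=\sum_{p=0}^{n}a_{p}^{2}\left(\widetilde{\eta}_{\theta,p}^{N}(\varphi_{p}^{2})-\widetilde{\eta}_{\theta,p}^{N}(\varphi_{p})^{2}\right),
\]
and I must show this tends in probability to $\sum_{p}a_{p}^{2}(\eta_{\theta,p}(\varphi_{p}^{2})-\eta_{\theta,p}(\varphi_{p})^{2})$. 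This is exactly where the consistency of the particle filter enters: since $\widetilde{\eta}_{\theta,p}^{N}=\Phi_{\theta,p}(\eta_{\theta,p-1}^{N})$ and $\Phi_{\theta,p}(\eta_{\theta,p-1})=\eta_{\theta,p}$, Lemma \ref{lem:lpErrorFilter} applied to the bounded (hence finite-oscillation) functions $\varphi_{p}$ and $\varphi_{p}^{2}$ yields $\widetilde{\eta}_{\theta,p}^{N}(\psi)\to\eta_{\theta,p}(\psi)$ in $\mathbb{L}_{r}$, and therefore in probability, for $\psi\in\{\varphi_{p},\varphi_{p}^{2}\}$; plugging these limits in gives the required quadratic-variation convergence.

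With both conditions in hand the martingale CLT delivers $S^{N}\Rightarrow\mathcal{N}(0,\sum_{p}a_{p}^{2}\sigma_{\theta,p}^{2})$ with $\sigma_{\theta,p}^{2}=\eta_{\theta,p}(\varphi_{p}^{2})-\eta_{\theta,p}(\varphi_{p})^{2}$, and Cram\'er--Wold concludes. I expect the main obstacle to be this quadratic-variation step: not its algebra, but the justification of replacing the random normalisations $\widetilde{\eta}_{\theta,p}^{N}$ by their deterministic limits, which rests entirely on the filter stability estimate of Lemma \ref{lem:lpErrorFilter}. Equivalently, one could peel off the generations one at a time through the conditional characteristic function, applying the classical i.i.d. CLT within each generation and the same consistency input to identify the Gaussian exponent; the martingale formulation simply bookkeeps this cleanly and makes the independence of the limit components transparent.
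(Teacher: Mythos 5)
The paper does not prove this lemma at all: it is quoted as Theorem 9.3.1 of \citet{delmoral2004}, and the proof given there is precisely your argument --- Cram\'er--Wold followed by a CLT for a triangular martingale-difference array built from the conditionally i.i.d.\ particles, with the predictable quadratic variation identified through the consistency of $\widetilde{\eta}_{\theta,p}^{N}$. Your proposal is correct and reproduces that standard route; the only cosmetic point is that the consistency step should cite Lemma \ref{lem:lpErrorFilter1} with $k=p-1$ (which controls $\Phi_{\theta,p}(\eta_{\theta,p-1}^{N})-\Phi_{\theta,p}(\eta_{\theta,p-1})$, i.e.\ exactly $\widetilde{\eta}_{\theta,p}^{N}-\eta_{\theta,p}$) rather than Lemma \ref{lem:lpErrorFilter}, which bounds $\eta_{\theta,p}^{N}-\eta_{\theta,p}$.
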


The following multivariate fluctuation theorem first proved under slightly
different assumptions in \citet{DDS09} is needed. See
also \citet{DGMO09} for a related study.

\begin{theo}
\label{theo:clt}Assume (A). For any $\theta$, $y$, $n\geq0$, $F_{n}%
\in\mathcal{B}(\mathcal{X}^{n+1})$,$\sqrt{N}\left(  \mathbb{Q}_{\theta,n}%
^{N}-\mathbb{Q}_{\theta,n}\right)  (F_{n})$ converges in law, as
$N\rightarrow\infty$, to the centered Gaussian random variable
\[
\sum_{p=0}^{n}V_{\theta,p}\left(  G_{\theta,p,n}~\frac{D_{\theta,p,n}%
(F_{n}-\mathbb{Q}_{\theta,n}(F_{n}))}{D_{\theta,p,n}(1)}\right)  .
\]
where $V_{\theta,p}$ is defined in (\ref{eq:localSampErrLimit}).
\end{theo}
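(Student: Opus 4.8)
The plan is to write $\sqrt{N}(\mathbb{Q}_{\theta,n}^{N}-\mathbb{Q}_{\theta,n})(F_{n})$ as a telescoping sum of the $n+1$ local sampling errors $V_{\theta,p}^{N}$ and then pass to the limit term by term, exactly in the spirit of the $\mathbb{L}_{r}$ analysis of Lemmas \ref{theoremLperrorProofPart1}--\ref{theoremLperrorProofPart2}. The backbone is the family of anchored measures $\eta_{\theta,p}^{N}D_{\theta,p,n}^{N}/\eta_{\theta,p}^{N}D_{\theta,p,n}^{N}(1)$, $0\le p\le n$. At $p=n$ the backward product $D_{\theta,n,n}^{N}=\mathcal{M}_{\theta,n}^{N}$ has no forward part and $D_{\theta,n,n}^{N}(1)=1$, so this equals $\mathbb{Q}_{\theta,n}^{N}(F_{n})$; at $p=0$ there is no backward part, whence $D_{\theta,0,n}^{N}=D_{\theta,0,n}$ and $\widetilde\eta_{\theta,0}^{N}=\pi_{\theta}$, giving $\mathbb{Q}_{\theta,n}(F_{n})$. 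The one-step identity (\ref{eq:smcLaw_etapDpn_equivalence}) shows $\eta_{\theta,p}^{N}D_{\theta,p,n}^{N}/\eta_{\theta,p}^{N}(g(y_{p}|\cdot))=\widetilde\eta_{\theta,p+1}^{N}D_{\theta,p+1,n}^{N}$, so moving the anchor from $p$ to $p+1$ only replaces $\widetilde\eta_{\theta,p+1}^{N}=\Phi_{\theta,p+1}(\eta_{\theta,p}^{N})$ by $\eta_{\theta,p+1}^{N}$, and telescoping yields
\[
\sqrt{N}\left(\mathbb{Q}_{\theta,n}^{N}-\mathbb{Q}_{\theta,n}\right)(F_{n})=\sum_{p=0}^{n}\sqrt{N}\left(\frac{\eta_{\theta,p}^{N}D_{\theta,p,n}^{N}(F_{n})}{\eta_{\theta,p}^{N}D_{\theta,p,n}^{N}(1)}-\frac{\widetilde\eta_{\theta,p}^{N}D_{\theta,p,n}^{N}(F_{n})}{\widetilde\eta_{\theta,p}^{N}D_{\theta,p,n}^{N}(1)}\right).
\]

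Next I would linearize each ratio. Abbreviating $G=Q_{\theta,p,n}(1)=D_{\theta,p,n}^{N}(1)$ and $H^{N}=D_{\theta,p,n}^{N}(F_{n})$ --- a function of $x_{p}$ that is $\mathcal{F}_{p-1}^{N}$-measurable, since $M_{\theta,j}^{N}$ for $j\le p$ depends only on $\eta_{\theta,j-1}^{N}$ --- a short computation gives that the $p$-th summand equals $V_{\theta,p}^{N}(\widetilde h_{p}^{N})\,\widetilde\eta_{\theta,p}^{N}(G)/\eta_{\theta,p}^{N}(G)$, where $\widetilde h_{p}^{N}=\left(H^{N}-\frac{\widetilde\eta_{\theta,p}^{N}(H^{N})}{\widetilde\eta_{\theta,p}^{N}(G)}\,G\right)/\widetilde\eta_{\theta,p}^{N}(G)$ is again $\mathcal{F}_{p-1}^{N}$-measurable. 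By the $\mathbb{L}_{r}$ convergence $D_{\theta,p,n}^{N}(F_{n})\to D_{\theta,p,n}(F_{n})$ at rate $N^{-1/2}$ from Lemma \ref{lem:lpErrorDpn}, together with $\eta_{\theta,p}^{N},\widetilde\eta_{\theta,p}^{N}\to\eta_{\theta,p}$ from Lemma \ref{lem:lpErrorFilter}, the random integrand $\widetilde h_{p}^{N}$ converges in $\mathbb{L}_{r}$ to the deterministic $h_{p}:=D_{\theta,p,n}(F_{n}-\mathbb{Q}_{\theta,n}(F_{n}))/\eta_{\theta,p}(Q_{\theta,p,n}(1))$, using $\eta_{\theta,p}D_{\theta,p,n}(F_{n})/\eta_{\theta,p}Q_{\theta,p,n}(1)=\mathbb{Q}_{\theta,n}(F_{n})$ and $D_{\theta,p,n}(1)=Q_{\theta,p,n}(1)$. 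Recalling (\ref{DGP}), $h_{p}=G_{\theta,p,n}\,D_{\theta,p,n}(F_{n}-\mathbb{Q}_{\theta,n}(F_{n}))/D_{\theta,p,n}(1)$, which is precisely the test function in the statement; under (A) it is bounded.

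I would then split $V_{\theta,p}^{N}(\widetilde h_{p}^{N})=V_{\theta,p}^{N}(h_{p})+V_{\theta,p}^{N}(\widetilde h_{p}^{N}-h_{p})$. Conditionally on $\mathcal{F}_{p-1}^{N}$ the stage-$p$ particles are i.i.d.\ draws from $\widetilde\eta_{\theta,p}^{N}$, so $V_{\theta,p}^{N}$ acts on the $\mathcal{F}_{p-1}^{N}$-measurable function $\widetilde h_{p}^{N}-h_{p}$ as on a fixed test function, and the conditional Khintchine inequality (Lemma \ref{lem:khinchine}, cf.\ Remark \ref{rem:khinchine}) gives $\mathbb{E}[|V_{\theta,p}^{N}(\widetilde h_{p}^{N}-h_{p})|^{2}\mid\mathcal{F}_{p-1}^{N}]^{1/2}\le C\,\mbox{osc}(\widetilde h_{p}^{N}-h_{p})$; the right-hand side tends to $0$ in $\mathbb{L}_{2}$ by the rates just invoked, so every remainder, and hence their finite sum, vanishes in probability. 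The scalar factors $\widetilde\eta_{\theta,p}^{N}(G)/\eta_{\theta,p}^{N}(G)\to1$ in probability. Finally Lemma \ref{lem:locErrClt} supplies the joint convergence in law of $(V_{\theta,0}^{N}(h_{0}),\dots,V_{\theta,n}^{N}(h_{n}))$ to the independent centered Gaussians $(V_{\theta,0}(h_{0}),\dots,V_{\theta,n}(h_{n}))$; summing is continuous, and Slutsky's lemma absorbs the negligible remainders and the ratio factors, so $\sqrt{N}(\mathbb{Q}_{\theta,n}^{N}-\mathbb{Q}_{\theta,n})(F_{n})$ converges in law to $\sum_{p=0}^{n}V_{\theta,p}(h_{p})$, the claimed limit.

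The delicate step is this replacement of the random, $\mathcal{F}_{p-1}^{N}$-measurable integrands $\widetilde h_{p}^{N}$ by their deterministic limits $h_{p}$ inside the already-rescaled fluctuations $V_{\theta,p}^{N}$: it is only because the normalization is anchored at $\widetilde\eta_{\theta,p}^{N}$ (making $\widetilde h_{p}^{N}$ adapted to $\mathcal{F}_{p-1}^{N}$, with the genuinely $\mathcal{F}_{p}^{N}$-measurable ratio $\widetilde\eta_{\theta,p}^{N}(G)/\eta_{\theta,p}^{N}(G)$ peeled off as a separate Slutsky factor) that the conditional $\mathbb{L}_{2}$ estimate applies and can be married to the $N^{-1/2}$ decay of $\mbox{osc}(\widetilde h_{p}^{N}-h_{p})$ from Lemma \ref{lem:lpErrorDpn}. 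Establishing this measurability and the $\mathbb{L}_{r}$ convergence of $\widetilde h_{p}^{N}$ cleanly enough to combine with the joint CLT is where the real work lies; the telescoping and ratio-linearization are routine once identity (\ref{eq:smcLaw_etapDpn_equivalence}) is in hand.
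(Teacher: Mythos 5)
Your overall strategy is the same as the paper's --- express the fluctuation as a sum of the local sampling errors $V_{\theta,p}^{N}$ applied to (approximations of) the deterministic test functions $G_{\theta,p,n}D_{\theta,p,n}(F_{n}-\mathbb{Q}_{\theta,n}(F_{n}))/D_{\theta,p,n}(1)$, kill the remainder coming from $D_{\theta,p,n}^{N}-D_{\theta,p,n}$ using Lemma \ref{lem:lpErrorDpn}, and conclude with Lemma \ref{lem:locErrClt} and Slutsky. The paper executes this on the \emph{unnormalized} measures $\Gamma_{n}^{N}=\gamma_{n}^{N}\mathbb{Q}_{n}^{N}$, where the telescoping gives exactly $\sqrt{N}[\Gamma_{n}^{N}-\Gamma_{n}](F_{n})=\sum_{p}\gamma_{p}^{N}V_{p}^{N}(D_{p,n}^{N}(F_{n}))$ with no ratio linearization, and the normalization is restored at the end by evaluating at $F_{n}-\mathbb{Q}_{n}(F_{n})$ and dividing by $\gamma_{n}^{N}$. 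You work directly with the normalized anchored measures and linearize each ratio; this is a legitimate alternative (it is the same decomposition used in Lemma \ref{theoremLperrorProofPart2}), at the cost of carrying the extra Slutsky factors $\widetilde{\eta}_{p}^{N}(G)/\eta_{p}^{N}(G)$. Your identification of the limiting integrand $h_{p}$ and the measurability bookkeeping are correct.

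There is, however, one step that does not follow from the lemma you cite. You control the remainder by $\mathbb{E}[|V_{\theta,p}^{N}(\widetilde h_{p}^{N}-h_{p})|^{2}\mid\mathcal{F}_{p-1}^{N}]^{1/2}\leq C\,\mbox{osc}(\widetilde h_{p}^{N}-h_{p})$ and assert that the right-hand side tends to $0$ in $\mathbb{L}_{2}$ ``by the rates just invoked.'' But Lemma \ref{lem:lpErrorDpn} gives, for each \emph{fixed} $x_{p}$, an $\mathbb{L}_{r}$ bound on $D_{\theta,p,n}^{N}(F_{n})(x_{p})-D_{\theta,p,n}(F_{n})(x_{p})$ with a constant uniform in $x_{p}$; it does not bound the expectation of the supremum over $x_{p}$, which is what $\mbox{osc}(\widetilde h_{p}^{N}-h_{p})$ is. Upgrading pointwise $\mathbb{L}_{r}$ control of an empirical-process-type quantity to uniform control is not automatic. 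The repair is exactly the device the paper uses in its own proof: bound the conditional second moment by the variance form
\[
\mathbb{E}\bigl[V_{\theta,p}^{N}(\widetilde h_{p}^{N}-h_{p})^{2}\mid\mathcal{F}_{p-1}^{N}\bigr]\leq\widetilde{\eta}_{\theta,p}^{N}\bigl((\widetilde h_{p}^{N}-h_{p})^{2}\bigr)\leq\rho^{2}\int dx_{p}\,f(x_{p}|x_{p-1})\,(\widetilde h_{p}^{N}-h_{p})(x_{p})^{2},
\]
the second inequality using (A) to dominate $\Phi_{\theta,p}(\eta_{\theta,p-1}^{N})$ by a fixed reference measure, and then take expectations and apply Fubini so that only the pointwise-in-$x_{p}$ bounds of Lemma \ref{lem:lpErrorDpn} (together with Lemma \ref{lem:lpErrorFilter} for the normalizing scalars) are needed. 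With that substitution your argument goes through.
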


\begin{proof} Let
\[
\gamma_{n}=\prod\limits_{k=0}^{n-1}\eta_{k}(g(\left.  y_{k}\right\vert .))
\]
and define the unnormalized measure
\[
\Gamma_{n}=\gamma_{n}\mathbb{Q}_{n}.
\]
The corresponding particle approximation is $\Gamma_{n}^{N}=\gamma_{n}%
^{N}\mathbb{Q}_{n}^{N}$\ where $\gamma_{n}^{N}$ $=%
{\textstyle\prod\nolimits_{k=0}^{n-1}}
\eta_{k}^{N}(g(\left.  y_{k}\right\vert .))$. The result is proven by studying
the limit of $\sqrt{N}\left(  \Gamma_{n}^{N}-\Gamma_{n}\right)  $ since \
\[
\lbrack\mathbb{Q}_{n}^{N}-\mathbb{Q}_{n}](F_{n})=\frac{1}{\gamma_{n}^{N}%
}\left[  \Gamma_{n}^{N}-\Gamma_{n}\right]  \left(  F_{n}-\mathbb{Q}_{n}%
(F_{n})\right)  .
\]
Note that Lemma \ref{lem:lpErrorFilter} implies $\gamma_{n}^{N}$ converges
almost surely to $\gamma_{n}$.
The key to studying the limit of $\sqrt{N}\left(  \Gamma_{n}^{N}-\Gamma
_{n}\right)  $ is the decomposition
\[
\sqrt{N}\left[  \Gamma_{n}^{N}-\Gamma_{n}\right]  (F_{n})=\sum_{p=0}^{n}%
\gamma_{p}^{N}~V_{p}^{N}\left(  D_{p,n}(F_{n})\right)  +R_{n}^{N}(F_{n})
\]
where the remainder term is
\[
R_{n}^{N}(F_{n}):=\sum_{p=0}^{n}\gamma_{p}^{N}~V_{p}^{N}\left(  F_{p,n}%
^{N}\right)  \quad\mbox{\rm and the function}\quad F_{p,n}^{N}:=[D_{p,n}%
^{N}-D_{p,n}](F_{n})
\]
By Slutsky's lemma and by the continuous mapping theorem (see
\citet{Van98}) it suffices to show that
$R_{n}^{N}(F_{n})$ converges to $0$, in probability, as
$N\rightarrow\infty$. To prove this, it will be established that
$\mathbb{E}\left( R_{n}^{N}(F_{n})^{2}\right)  $ is
$\mathcal{O}(N^{-1})$.
Since
\[
\mathbb{E}\left\{  R_{n}^{N}(F_{n})^{2}\right\}  =\sum_{p=0}^{n}%
\mathbb{E}\left\{  \left(  \gamma_{p}^{N}~V_{p}^{N}\left(  F_{p,n}^{N}\right)
\right)  ^{2}\right\}  ,
\]
and $\left\vert \gamma_{p}^{N}\right\vert \leq c_{p}$ almost surely, where $c_{p}$ is some
non-random constant which can be derived using (A), it suffices to
prove that $\mathbb{E}\left(  V_{p}^{N}\left(  F_{p,n}^{N}\right)
^{2}\right)  $ is $\mathcal{O}(N^{-1})$.
By expanding the square one arrives at%
\[
\mathbb{E}\left(  V_{p}^{N}\left(  F_{p,n}^{N}\right)  ^{2}~\left\vert
~\mathcal{F}_{p-1}^{N}\right.  \right)  \leq \Phi_{p}\left(  \eta_{p-1}%
^{N}\right)  \left(  \left(  F_{p,n}^{N}\right)  ^{2}\right)  .
\]
By Assumption (A), for any $x_{p-1}\in\mathcal{X}$,
\[
\Phi_{p}\left(  \eta_{p-1}^{N}\right)  \left(  \left(  F_{p,n}^{N}\right)
^{2}\right)  \leq\rho^{2}\int dx_{p}~f(\left.  x_{p}\right\vert x_{p-1}%
)~F_{p,n}^{N}(x_{p})^{2}.
\]
By Lemma \ref{lem:lpErrorDpn}, $\mathbb{E}\left(  V_{p}^{N}\left(  F_{p,n}%
^{N}\right)  ^{2}\right)  $ is\ $\mathcal{O}(N^{-1})$.
\end{proof}

The next lemma is needed to quantify the variance of the particle estimate of
the filter gradient computed using the path-based method. Note that this lemma
does not require the hidden chain to be mixing. We refer the reader to \citet{DeM01}
for a propagation of chaos analysis.

For any $\theta$, $y=\{y_{n}\}_{n\geq0}$, let $\{\mathcal{V}_{\theta
,n}\}_{n\geq0}$ be a sequence of independent centered Gaussian random fields
defined as follows. For any sequence of functions $\{F_{n}\in\mathcal{B}%
(\mathcal{X}^{n+1})\}_{n\geq0}$\ and any $p\geq0$, $\{\mathcal{V}_{\theta
,n}(F_{n})\}_{n=0}^{p}$ is a collection of independent zero-mean Gaussian
random variables with variances given by
\begin{equation}
\mathbb{E}_{\theta}(F_{n}(X_{0:n})^{2}|y_{0:n-1})-\mathbb{E}_{\theta}%
(F_{n}(X_{0:n})|y_{0:n-1})^{2}.
\end{equation}

\begin{lem}
\label{lem:pathClt}Let $\{\delta_{\theta}\}_{\theta\in\Theta}\subset
\lbrack1,\infty)$ and assume $\delta_{\theta}^{-1}\leq g_{\theta}%
(y|x)\leq\delta_{\theta}$\ for all $(x,y,\theta)\in\mathcal{X}\times
\mathcal{Y}\times\Theta$. For any $\theta$, $y$, $n\geq0$, $F_{n}%
\in\mathcal{B}(\mathcal{X}^{n+1})$,$\sqrt{N}\left(  p_{\theta}^{N}%
(dx_{0:n}|y_{0:n-1})-\mathbb{Q}_{\theta,n}\right)  (F_{n})$ converges in law,
as $N\rightarrow\infty$, to the centered Gaussian random variable
\[
\sum_{p=0}^{n}\mathcal{V}_{\theta,p}\left(  G_{\theta,p,n}~F_{\theta
,p,n}\right)  .
\]
where $G_{\theta,p,n}$ was defined in (\ref{DGP}) and \
\[
F_{\theta,p,n}=\mathbb{E}_{\theta}(F(X_{0:n})|x_{0:p},y_{p+1:n-1}%
)-\mathbb{Q}_{\theta,n}(F_{n})
\]

\end{lem}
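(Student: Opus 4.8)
The plan is to mirror the proof of Theorem \ref{theo:clt}, exploiting the fact that $p_\theta^N(dx_{0:n}|y_{0:n-1})$ is precisely the genealogical particle measure $\mathbb{Q}_{\theta,n}^{\text{p},N}$ of (\ref{eq:smcJointDen}): the occupation measure of the $N$ interacting \emph{paths} evolved through the resampling--mutation recursion (\ref{eq:smcJointDenRecursion}). This is the particle approximation of the historical Feynman--Kac flow on $\mathcal{X}^{n+1}$, whose potential $g_\theta(y_n|x_n)$ depends only on the terminal coordinate and whose free Markov transition merely appends a coordinate drawn from $f_\theta$. First I would pass to the unnormalised path measures $\Gamma_n^{\text{p},N}=\gamma_n^N\mathbb{Q}_{\theta,n}^{\text{p},N}$ and $\Gamma_n=\gamma_n\mathbb{Q}_{\theta,n}$, with $\gamma_n,\gamma_n^N$ as in the proof of Theorem \ref{theo:clt}, and use
\[
[\mathbb{Q}_{\theta,n}^{\text{p},N}-\mathbb{Q}_{\theta,n}](F_n)=\frac{1}{\gamma_n^N}[\Gamma_n^{\text{p},N}-\Gamma_n]\big(F_n-\mathbb{Q}_{\theta,n}(F_n)\big),
\]
which reduces the statement to the fluctuation of $\sqrt{N}(\Gamma_n^{\text{p},N}-\Gamma_n)$.

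The decisive simplification relative to Theorem \ref{theo:clt} is the following. Let $\mathcal{Q}_{p,n}$ be the forward path semigroup
\[
\mathcal{Q}_{p,n}(F_n)(x_{0:p})=\int\Big(\prod_{j=p}^{n-1}Q_{\theta,j+1}(x_j,dx_{j+1})\Big)F_n(x_{0:n}),
\]
which, in contrast to the operator $D_{\theta,p,n}^N$, carries \emph{no} particle approximation of the backward kernels. Writing $\mathcal{V}_{\theta,p}^N=\sqrt{N}(\eta_{\theta,p}^{\text{p},N}-\Phi_{\theta,p}^{\text{p}}(\eta_{\theta,p-1}^{\text{p},N}))$ for the genealogical local sampling error (here $\eta_{\theta,p}^{\text{p},N}(dx_{0:p})=\mathbb{Q}_{\theta,p}^{\text{p},N}(dx_{0:p})$ and $\Phi_{\theta,p}^{\text{p}}$ is the path prediction map), the same telescoping computation used in the proof of Theorem \ref{theo:clt}, now carried out on $\mathcal{X}^{p+1}$, yields the \emph{exact} identity
\[
\sqrt{N}[\Gamma_n^{\text{p},N}-\Gamma_n](F_n)=\sum_{p=0}^n\gamma_p^N\,\mathcal{V}_{\theta,p}^N\big(\mathcal{Q}_{p,n}(F_n)\big).
\]
Because $\mathcal{Q}_{p,n}$ is deterministic there is no remainder term $R_n^N$ to dispose of, so the delicate $\mathcal{O}(N^{-1})$ estimate of the proof of Theorem \ref{theo:clt} (which invoked Lemma \ref{lem:lpErrorDpn} and hence Assumption (A)) is entirely bypassed. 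This is exactly why only the bound $\delta_\theta^{-1}\le g_\theta\le\delta_\theta$, with no mixing hypothesis on $f_\theta$, is needed here.

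It then remains to pass to the limit. The field $\{\mathcal{V}_{\theta,p}^N\}_{p\ge0}$ is the local error of the historical particle model, so the fluctuation theorem of \citet[Theorem 9.3.1]{delmoral2004} (the path-space counterpart of Lemma \ref{lem:locErrClt}, valid for any bounded potential) gives joint convergence of $(\mathcal{V}_{\theta,0}^N(\mathcal{Q}_{0,n}F_n),\dots,\mathcal{V}_{\theta,n}^N(\mathcal{Q}_{n,n}F_n))$ to the independent centred Gaussians $(\mathcal{V}_{\theta,0},\dots,\mathcal{V}_{\theta,n})$ with the stated conditional variances; the boundedness of $g_\theta$ is precisely what secures finite variances and the Lindeberg condition. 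Since $\gamma_p^N\to\gamma_p$ almost surely for bounded potentials \citep{delmoral2004}, Slutsky's lemma and the continuous mapping theorem give
\[
\sqrt{N}[\mathbb{Q}_{\theta,n}^{\text{p},N}-\mathbb{Q}_{\theta,n}](F_n)\xrightarrow{\mathcal{L}}\sum_{p=0}^n\frac{\gamma_p}{\gamma_n}\,\mathcal{V}_{\theta,p}\big(\mathcal{Q}_{p,n}(F_n-\mathbb{Q}_{\theta,n}(F_n))\big).
\]

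Finally I would identify this with the claimed limit by a purely algebraic step. The Feynman--Kac identity $\gamma_n/\gamma_p=\eta_{\theta,p}(Q_{\theta,p,n}(1))$, together with $\mathcal{Q}_{p,n}(1)(x_{0:p})=Q_{\theta,p,n}(1)(x_p)$ and the observation that $\mathcal{Q}_{p,n}(F_n)(x_{0:p})/Q_{\theta,p,n}(1)(x_p)=\mathbb{E}_\theta(F_n(X_{0:n})|x_{0:p},y_{p+1:n-1})$, shows that
\[
\frac{\gamma_p}{\gamma_n}\,\mathcal{Q}_{p,n}(F_n-\mathbb{Q}_{\theta,n}(F_n))(x_{0:p})=G_{\theta,p,n}(x_p)\,F_{\theta,p,n}(x_{0:p}),
\]
with $G_{\theta,p,n}$ as in (\ref{DGP}) and $F_{\theta,p,n}$ as in the statement, which closes the argument. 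The only genuine work is the local-error CLT of the third step, which is standard for the genealogical model; the main conceptual point, and the reason the proof is shorter than that of Theorem \ref{theo:clt}, is that the path-space representation introduces no particle-approximated backward kernels, making the decomposition remainder-free.
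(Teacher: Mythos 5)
Your proof is correct. Note that the paper itself offers no proof of Lemma \ref{lem:pathClt}: it is stated bare, with a pointer to the propagation-of-chaos literature, so there is nothing to compare against except the proof of the companion result, Theorem \ref{theo:clt}, whose skeleton you reproduce faithfully on path space. Your key observations are all sound: $p_{\theta}^{N}(dx_{0:n}|y_{0:n-1})$ is the genealogical measure $\mathbb{Q}_{\theta,n}^{\text{p},N}$, i.e.\ the particle approximation of the historical Feynman--Kac flow with potential depending only on the terminal coordinate; the telescoping decomposition of $\sqrt{N}(\Gamma_n^{\text{p},N}-\Gamma_n)$ in terms of the path-space local errors is exact because the forward semigroup $\mathcal{Q}_{p,n}$ is deterministic (no particle-approximated backward kernels, hence no analogue of $R_n^N$ and no need for Lemma \ref{lem:lpErrorDpn} or the mixing part of Assumption (A)); and the final identification $\gamma_p\gamma_n^{-1}\mathcal{Q}_{p,n}(F_n-\mathbb{Q}_{\theta,n}(F_n))=G_{\theta,p,n}\,F_{\theta,p,n}$ checks out, since $\gamma_n/\gamma_p=\eta_{\theta,p}(Q_{\theta,p,n}(1))$ and $\mathcal{Q}_{p,n}(F_n)/Q_{\theta,p,n}(1)$ is exactly the conditional expectation appearing in $F_{\theta,p,n}$ (the factor $g_{\theta}(y_p|x_p)$ cancels in the normalization, which is why the conditioning is on $y_{p+1:n-1}$). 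This is precisely the argument the cited references carry out, so your write-up fills the paper's omission rather than deviating from it.
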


\subsubsection{Proof of Theorem \ref{theo:gradClt}}

It follows from Algorithm 1 that%
\begin{align}
&  \left(  \zeta_{n}^{N}-\zeta_{n}\right)  (\varphi_{n})\nonumber\\
&  =\mathbb{Q}_{n}^{N}(\varphi_{n}T_{n})-\mathbb{Q}_{n}(\varphi_{n}%
T_{n})+\mathbb{Q}_{n}(\varphi_{n})\mathbb{Q}_{n}(T_{n})-\mathbb{Q}_{n}%
^{N}(\varphi_{n})\mathbb{Q}_{n}^{N}(T_{n}) \label{eq:gradClt_1}%
\end{align}
The second term on the right hand side \ of the equality can be expressed as%
\begin{align}
&  \mathbb{Q}_{n}(\varphi_{n})\mathbb{Q}_{n}(T_{n})-\mathbb{Q}_{n}^{N}%
(\varphi_{n})\mathbb{Q}_{n}^{N}(T_{n})\nonumber\\
&  =\mathbb{Q}_{n}(\varphi_{n}\mathbb{Q}_{n}(T_{n})+\mathbb{Q}_{n}(\varphi
_{n})T_{n})-\mathbb{Q}_{n}^{N}(\varphi_{n}\mathbb{Q}_{n}(T_{n})+\mathbb{Q}%
_{n}(\varphi_{n})T_{n})\nonumber\\
&  \quad +\left(  \mathbb{Q}_{n}^{N}(\varphi_{n})-\mathbb{Q}_{n}(\varphi
_{n})\right)  \left(  \mathbb{Q}_{n}(T_{n})-\mathbb{Q}_{n}^{N}(T_{n})\right)
. \label{eq:gradClt_2}%
\end{align}
Combining the two expressions in (\ref{eq:gradClt_1}) and (\ref{eq:gradClt_2})
gives
\begin{align*}
&  \left(  \zeta_{n}^{N}-\zeta_{n}\right)  (\varphi_{n})\\
&  =\mathbb{Q}_{n}^{N}\left(  \left(  \varphi_{n}-\mathbb{Q}_{n}(\varphi
_{n})\right)  \left(  T_{n}-\mathbb{Q}_{n}(T_{n})\right)  \right) \\
&  -\mathbb{Q}_{n}\left(  \left(  \varphi_{n}-\mathbb{Q}_{n}(\varphi
_{n})\right)  \left(  T_{n}-\mathbb{Q}_{n}(T_{n})\right)  \right) \\
&  +\left(  \mathbb{Q}_{n}^{N}(\varphi_{n})-\mathbb{Q}_{n}(\varphi
_{n})\right)  \left(  \mathbb{Q}_{n}(T_{n})-\mathbb{Q}_{n}^{N}(T_{n})\right)
\end{align*}
Using Lemma \ref{lem:lpErrorFilter} with $r=2$ and Chebyshev's inequality, we
see that $\left(  \mathbb{Q}_{n}^{N}(\varphi_{n})-\mathbb{Q}_{n}(\varphi
_{n})\right)  $\ converges in probability to 0. Theorem \ref{theo:clt} can now
be invoked with Slutsky's theorem to arrive at the stated result in
(\ref{eq:gradClt}).

Moving on to the uniform bound on the variance, let
\begin{align*}
T_{n}-\mathbb{Q}_{n}(T_{n})   & =\sum_{k=0}^{n}\widetilde{t}_{k},\\
\widetilde{t}_{k}    &=t_{k}-\mathbb{Q}_{n}(t_{k}), \\
\widetilde{\varphi}_{n}    &=\varphi_{n}-\mathbb{Q}_{n}(\varphi_{n}).
\end{align*}
Also, the argument of $V_{p}$\ can be expressed as%
\[
\phi_{p}(x_{p})=\frac{Q_{p,n}(1)(x_{p})}{\eta_{p}Q_{p,n}(1)}\sum_{k=0}%
^{n}\frac{D_{p,n}\left(  \widetilde{\varphi}_{n}\widetilde{t}_{k}%
-\mathbb{Q}_{n}\left(  \widetilde{\varphi}_{n}\widetilde{t}_{k}\right)
\right)  (x_{p})}{D_{p,n}(1)(x_{p})}.
\]
It is straightforward to see that $\eta_{p}(\phi_{p})=0$. Therefore the
variance (see (\ref{eq:localSampErrLimit})) now simplifies to
\begin{equation}
\text{var}\sum_{p=0}^{n}V_{p}\left(  G_{p,n}~\frac{D_{p,n}(F_{n}%
-\mathbb{Q}_{n}(F_{n}))}{D_{p,n}(1)}\right)  =\sum_{p=0}^{n}\eta_{p}(\phi
_{p}^{2}). \label{eq:gradClt_3}%
\end{equation}
Consider the function $\phi_{p}$. For $p\leq k-1$,%
\begin{dmath*}
  \frac{D_{p,n}\left(  \widetilde{\varphi}_{n}\widetilde{t}_{k}%
-\mathbb{Q}_{n}\left(  \widetilde{\varphi}_{n}\widetilde{t}_{k}\right)
\right)  (x_{p})}{D_{p,n}(1)(x_{p})}
  =\int\frac{\eta_{p}(dx_{p}^{\prime})Q_{p,n}(1)(x_{p}^{\prime})}{\eta
_{p}Q_{p,n}(1)} \\  \times \int\left(  \frac{Q_{p,k-1}(x_{p},dx_{k-1})Q_{k-1,n}%
(1)(x_{k-1})}{Q_{p,n}(1)(x_{p})}-\frac{Q_{p,k-1}(x_{p}^{\prime},dx_{k-1}%
)Q_{k-1,n}(1)(x_{k-1})}{Q_{p,n}(1)(x_{p}^{\prime})}\right) \\
 \times\int\frac{Q_{k}(x_{k-1},dx_{k})Q_{k,n}(1)(x_{k})}{Q_{k-1,n}%
(1)(x_{k-1})}\widetilde{t}_{k}(x_{k-1},x_{k})P_{k,n}(\widetilde{\varphi}%
_{n})(x_{k}).
\end{dmath*}
Using the estimates in (\ref{eq:assAeq2}) and (\ref{eq:contractionEst}), this
function is bounded by
\begin{equation}
\sup_{x_{p}}\left\vert \frac{D_{p,n}\left(  \widetilde{\varphi}_{n}%
\widetilde{t}_{k}-\mathbb{Q}_{n}\left(  \widetilde{\varphi}_{n}\widetilde
{t}_{k}\right)  \right)  (x_{p})}{D_{p,n}(1)(x_{p})}\right\vert \leq
C\overline{\rho}^{n-1-p} \label{eq:phi_p_bound_part1}%
\end{equation}
for some constant $C$. When $p\geq k$,%
\begin{align*}
&  \frac{D_{p,n}\left(  \widetilde{\varphi}_{n}\widetilde{t}_{k}%
-\mathbb{Q}_{n}\left(  \widetilde{\varphi}_{n}\widetilde{t}_{k}\right)
\right)  (x_{p})}{D_{p,n}(1)(x_{p})}\\
&  =\int\frac{\eta_{p}(dx_{p}^{\prime})Q_{p,n}(1)(x_{p}^{\prime})}{\eta
_{p}Q_{p,n}(1)}\left(  \mathcal{M}_{p}(\widetilde{t}_{k})(x_{p})P_{p,n}%
(\widetilde{\varphi}_{n})(x_{p})-\mathcal{M}_{p}(\widetilde{t}_{k}%
)(x_{p}^{\prime})P_{p,n}(\widetilde{\varphi}_{n})(x_{p}^{\prime})\right)  .
\end{align*}
Again using the estimates in (\ref{eq:assAeq2}), (\ref{eq:contractionEst}) and
(\ref{eq:contractionEst2}),%

\begin{equation}
\sup_{x_{p}}\left\vert \frac{D_{p,n}\left(  \widetilde{\varphi}_{n}%
\widetilde{t}_{k}-\mathbb{Q}_{n}\left(  \widetilde{\varphi}_{n}\widetilde
{t}_{k}\right)  \right)  (x_{p})}{D_{p,n}(1)(x_{p})}\right\vert \leq
C\overline{\rho}^{n-k}. \label{eq:phi_p_bound_part2}%
\end{equation}
Combining (\ref{eq:phi_p_bound_part1}) and (\ref{eq:phi_p_bound_part2}),
\[
\sup_{x_{p}}\left\vert \phi_{p}(x_{p})\right\vert \leq\frac{C\overline{\rho
}^{n-p}}{1-\overline{\rho}}+C\overline{\rho}^{n-p-1}(n-p),
\]
$0\leq p\leq n$. Combining this bound with (\ref{eq:gradClt_3}) will establish
the result.

\bibliographystyle{plainnat}
\bibliography{paper-ref}

\end{document}